\documentclass[oneside, reqno]{amsart}

\newcommand{\hide}[1]{}

\usepackage{amssymb}
\usepackage{graphicx}
\usepackage{amsmath}

\usepackage{enumitem}
\usepackage{bm}

\usepackage{mathtools}
\usepackage{graphics}
\usepackage{xcolor}
\usepackage{multicol}

\usepackage{textpos}
\usepackage{subfigure}
\usepackage{framed} 
\usepackage{tikz}
\usepackage{tikz-cd}
\usepackage{commath}
\usepackage{nicematrix}
\usepackage{geometry}
\usepackage{float}

\usepackage[colorlinks=true,urlcolor=blue]{hyperref}

\usepackage[doi=false,isbn=false,url=false,eprint=false, style=alphabetic, backend=bibtex,maxbibnames=99]{biblatex}
\usetikzlibrary{shapes, arrows.meta, positioning}

\DeclareMathAlphabet{\mathbbold}{U}{bbold}{m}{n}

\theoremstyle{definition}

\newcommand{\ITM}{\mathrm{ITM}}
\newcommand{\IET}{\mathrm{IET}}

\newcommand{\R}{\mathbb{R}}

\newcommand{\betas}{\beta_*}
\newcommand{\betass}{\beta_{**}}

\renewcommand{\ge}{\geqslant}
\renewcommand{\le}{\leqslant}

\numberwithin{figure}{section}
\numberwithin{equation}{section}

\theoremstyle{plain}
\newtheorem{theorem}{Theorem}[section]
\newtheorem{proposition}[theorem]{Proposition}

\newtheorem*{conjecture*}{Conjecture}

\newtheorem*{mtheorem}{Characterisation of Stability}
\newtheorem*{mconjecture}{Boshernitzan--Kornfeld Conjecture}

\newtheorem*{theorem*}{Theorem}

\newtheorem{lemma}[theorem]{Lemma}
\newtheorem{corollary}[theorem]{Corollary}

\theoremstyle{remark}

\theoremstyle{definition}
\newtheorem{definition}[theorem]{Definition}

\newtheorem{example}[theorem]{Example}

\usepackage{xpatch}
\makeatletter
\AtBeginDocument{\xpatchcmd{\@thm}{\thm@headpunct{.}}{\thm@headpunct{}}{}{}}
\makeatother

\newcounter{lstv}

\newcounter{lsta}

\title[Characterisation of Stability for Interval Translation Maps]{Characterisation of Stability for Interval Translation Maps}
\author[Kostiantyn Drach]{Kostiantyn Drach}
\author[Leon Staresinic]{Leon Staresinic}
\author[Sebastian van Strien]{Sebastian van Strien}

\address{Universitat de Barcelona (Gran Via de les Corts Catalanes, 585, 08007 Barcelona, Spain)}
\address{Centre de Recerca Matem\`atica (Edifici C, Carrer de l'Albareda, 08193 Bellaterra, Spain)}
\email{kostiantyn.drach@ub.edu}

\address{University of Z\"urich (190 Winterthurerstrasse, 8057 Z\"urich, Switzerland)}
\email{leon.staresinic@math.uzh.ch}

\address{Imperial College London (180 Queen's Gate, South Kensington, London SW7 2AZ, UK)}
\email{s.van-strien@imperial.ac.uk}

\thanks{The first author was partially supported from grants CNS2025-166633 (AEI), PID2023-147252NB-I00 (AEI), CEX2020-001084-M (Maria de Maeztu Excellence program), and the ERC Advanced Grant ``SPERIG'' (\#885707). The second author acknowledges the support from Imperial College London through the Roth PhD scholarship and the Swiss National Science foundation through grant number TMCG-2\_213663·2023. The third author acknowledges a partial sponsorship via Bj\"orn Winckler's Marie Curie postdoctoral fellowship \#743959.
\newline 
\indent The content of this paper is part of an earlier manuscript \cite{drach2025densitystableintervaltranslation}, which is available on arXiv. That manuscript has been split into a three-part series: \cite{drach2026transversalityintervaltranslationmaps}, the present paper, and \cite{drach2026topologicalprevalencefinitetype}.}

\begin{document}

\begin{abstract} An \emph{interval translation map} ($\ITM$) is a piecewise translation $T \colon I \to I$ defined on a finite partition $I_1, \ldots, I_r$ of an interval $I$ into $r \ge 2$ subintervals. In contrast to classical interval exchange transformations ($\IET$s), we do not require that the images of these subintervals are disjoint; in particular, $\ITM$s are not assumed to be bijective. Thus, $\ITM$s provide a natural non-invertible generalisation of $\IET$s. 

In this paper, we formulate an appropriate notion of stability for general interval translation mappings and prove a characterisation of stability in terms of two dynamically natural properties called the \textit{Absence of Critical Connections} and \textit{Matching}. This result can be viewed as the foundational step towards the stability theory of general $\ITM$s.
\end{abstract}

\maketitle

\section{Introduction}

\emph{Interval translation maps} ($\ITM$s) are piecewise isometries of an interval that arise as natural non-invertible generalisations of the classical interval exchange transformations ($\IET$s). Throughout the paper, we fix the interval $I := [0,1)$. An $\ITM$ on $r \ge 2$ intervals is specified by

\begin{itemize}
    \item[--] a partition of $I$ by the \emph{discontinuity points} 
    $0 = \beta_0 < \beta_1 < \beta_2 < \dots < \beta_{r-1} < \beta_r = 1$, 
    \item[--] the \emph{translation parameters} $\gamma_1, \dots, \gamma_r$,
\end{itemize}
and it is a map $T \colon I \to I$ that acts by translation on each subinterval of the partition:
\[
T(x) := x + \gamma_i 
\quad \text{for } \quad x \in [\beta_{i-1}, \beta_i),
\quad 1 \le i \le r.
\]
The condition $T(I) \subset I$ forces $\gamma_i \in [-\beta_{i-1},\, 1-\beta_i]$ for all $i$. Consequently, the \emph{parameter space} $\ITM(r)$ of $\ITM$s on $r$ intervals identifies naturally with a convex polytope in $\R^{2r-1}$. Throughout, we endow $\ITM(r)$ with the induced metric.

In contrast with $\IET$s, a typical $T \in \ITM(r)$ is not surjective, i.e.\ $T(I) \subsetneq I$. This lack of surjectivity generates a nested sequence
\[
I \supsetneq T(I) \supseteq T^2(I) \supseteq T^3(I) \supseteq \dots,
\]
which we encode by setting $X_n := T^n(I)$ for $n \ge 0$. Each $X_n$ is a finite union of intervals, and therefore, the intersection
\[
X := \bigcap_{n=0}^{\infty} X_n
\]
is a non-empty \emph{attractor} of the system. This leads to a fundamental dichotomy: an $\ITM$ is said to be of \emph{finite type} if $X_{n+1}=X_n = X$ for some $n$ (and hence for all larger indices), and of \emph{infinite type} if $X_{n+1} \subsetneq X_n$ for all $n$. In fact, the closure of $X$ in $I$ is equal to the \emph{non-wandering set} of $T$ (see Lemma \ref{lem:nonwandering} below). 

The first explicit example of an infinite type map was constructed by Boshernitzan and Kornfeld in~\cite{MR1356616} for $r=3$. They conjectured that such behaviour is rare:

\begin{mconjecture}
\label{conj:inf-type-zero}
For every $r \ge 2$, the set of infinite type $\ITM$s on $r$ intervals 
has Lebesgue measure zero in $\ITM(r)$.
\end{mconjecture}

This conjecture has driven much of the research on $\ITM$s, yet it remains widely open except in a few special cases. Initial progress was obtained for $r=3$ within a special two-parameter family, where the conjecture was proved by Bruin and Troubetzkoy in~\cite{MR2013352}. The Bruin--Troubetzkoy family admits a renormalization scheme analogous to the Rauzy--Veech induction for $\IET$s, and the dynamical properties of this renormalization imply that the set of infinite type maps has zero measure. This parallels the role of the Rauzy--Veech induction in the theory of $\IET$s, which is used to prove that almost every minimal $\IET$ is uniquely ergodic (\cite{MR644018,MR644019}) and weakly mixing (\cite{MR2299743}). Analogous results for typical infinite type maps in the family from ~\cite{MR2013352} were established in ~\cite{MR4973368, artigiani2026typicalweakmixingexceptional} (see also \cite{bruin2023interval}).

A related renormalization approach was later introduced for $r \le 4$ within a broader class of \emph{double rotations}, studied in~\cite{MR2152403, MR2966738,MR4397159}. Ultimately, for $r=3$ the conjecture was established in full in~\cite{MR3124735} by showing that almost every $\ITM$ on three intervals can be renormalized to a double rotation.  Beyond these cases, however, there has been little progress toward a general renormalization theory for $\ITM$s with an arbitrary number of intervals, apart from some special situations (see~\cite{MR2308208}).

The Boshernitzan--Kornfeld conjecture concerns measure-theoretic genericity in $\ITM(r)$. A complementary perspective is provided by topological genericity, where one aims to show that a given property holds on a dense or residual (Baire generic) subset of the parameter space.  A principal tool for establishing such results in dynamical systems is via perturbation theory. 
Broadly speaking, perturbation results allow one to modify the system arbitrarily slightly in order to realise prescribed local dynamical behaviour, while maintaining control over the global dynamics.
By iterating such modifications, one constructs arbitrarily small perturbations that produce global dynamical changes, thereby establishing the genericity of the desired property. Perturbation theory for $\ITM$s was initiated by the authors in \cite{drach2026transversalityintervaltranslationmaps}.

When studying dynamics from a perturbative point of view, a natural problem is to identify properties of a dynamical system that persist under all sufficiently small perturbations. In particular, a dynamical system is called \textit{structurally stable} if its global orbit structure persists under all sufficiently small perturbations. One of the classical aims in the field of dynamical systems is to characterise structurally stable systems in terms of more concrete dynamical properties, and if possible, to prove that such systems are {\lq}typical{\rq}. Let us give several examples of such results.

For $\mathcal{C}^r$ circle diffeomorphisms, with $r \ge 2$, it is known \cite{MR1239171} that the structurally stable systems are exactly the Morse--Smale ones, and that such systems form an open and dense subset of the parameter space. The same holds for $\mathcal{C}^1$ vector fields on two-dimensional manifolds by results of Peixoto \cite{MR101951, MR142859}. For diffeomorphisms, density is impossible in any dimension greater than $1$ by the examples of Newhouse \cite{MR339291}, and the precise characterisation of structural stability is more complicated. It is known that properties Axiom A and strong transversality are sufficient for $\mathcal{C}^r$ structural stability, for any $r \ge 1$, by results of Robbin \cite{MR287580} and Robinson (\cite{MR474411}). The converse is widely believed, but only known for $r=1$ by the result of Ma\~ne \cite{MR932138}. For interval dynamics, $\mathcal{C}^r$-density of structurally stable systems, for any $r \ge 1$, is known by the results of Kozlovski, Shen, and van Strien \cite{MR2335796, MR2342693}, while the corresponding result for $1$-dimensional holomorphic maps was shown by McMullen and Sullivan in \cite{MR1620850}. For exact definitions and results, and a detailed overview of the structural stability for smooth systems, we recommend the survey by Berger \cite{MR3821044}.

In all of the examples above, the main mechanism behind structural stability is the hyperbolicity of maps. The situation for $\ITM$s is drastically different: the derivative is fixed and equal to $1$ everywhere except for finitely many points. Moreover, it turns out that $\ITM$s are never structurally stable or $\Omega$-stable (see Lemma \ref{lem:itm-not-struct-stab} below). For this reason, we define a map to be stable in this context if the next best thing holds: if, loosely speaking, its non-wandering set depends continuously on the system. Let $\overline{X}$ be the closure of $X$ in $[0,1]$; note that the non-wandering set of $T$ is equal to $\overline{X} \setminus \{1\}$ (see Lemma~\ref{lem:nonwandering}).

\begin{definition}[Stable maps]
We say that a map $T \in \ITM(r)$ is \textit{stable} if there a neighbourhood $\mathcal{U}$ of $T$ in $\ITM(r)$ such that:

\begin{enumerate}
\item The mapping assigning to each $\Tilde{T} \in \mathcal{U}$ the corresponding set $\overline{X}(\Tilde{T})$ is continuous with respect to the Hausdorff topology on compact subsets of $[0,1]$. Moreover, for each $\tilde{T} \in \mathcal{U}$, $\overline{X}(\Tilde{T})$ is homeomorphic to $\overline{X}(T)$.
\item The number of discontinuities in $I \setminus \overline{X}(\Tilde{T})$ is constant in $\mathcal{U}$.
\end{enumerate}
\end{definition}

This notion of stability is weaker than the usual ones in $1$-dimensional dynamics because it does not require that the system and its perturbation are conjugate on their non-wandering sets (see \cite{MR1239171, MR1312365}). Note that for $\IET$s, the non-wandering set is always the entire interval, so the main result of this paper becomes trivial.

Since $\ITM$s are never hyperbolic, we need to look for a different mechanism for stability.
Whether or not a map in $\ITM$ is stable turns out to be equivalent to two conditions: the \emph{Absence of Critical Connections} (ACC) and the \emph{Matching} condition. Here ACC means that no iterate of a discontinuity (critical point) is mapped to another discontinuity (see Definition \ref{def:acc}). In the literature on $\IET$s, this is often called the Keane condition. The Matching condition means that the first return map to a non-trivial connected component of the non-wandering set has at most one discontinuity (see Definition \ref{def:matching}). This condition has no clear analogue for invertible systems. It rules out that one connected component of the non-wandering set consists of two pieces with independent dynamics which join up for `coincidental' reasons. This characterisation of stability implies in particular that stable maps are simple: their dynamics on their non-wandering sets corresponds to (a union of) circle rotations. In summary, we have the following theorem, which is the main result of this paper:

\begin{mtheorem}
An interval translation map $T \in \ITM(r)$ is stable if and only if it is of finite type and simultaneously satisfies the Absence of Critical Connections and the Matching conditions.
\end{mtheorem}

It is easy to see that stable maps must be of finite type (Corollary \ref{lem:stable-fin-type}). In general, the mapping $T\mapsto \overline{X}(T)$ is neither upper nor lower semi-continuous, as it is possible for the set $\overline{X}(T)$ to jump up (Example \ref{ex:ghost-preimage}) or down (Figure \ref{fig:pert-a1}) after an arbitrarily small perturbation. In the sequel paper \cite{drach2026topologicalprevalencefinitetype}, using the Characterisation of Stability Theorem, we will show that stable maps are dense in $\ITM(r)$. This will imply that finite type maps contain an open and dense subset of $\ITM(r)$, thus resolving in the positive the topological version of the Boshernitzan--Kornfeld Conjecture.

The structure of the paper is as follows. In Section \ref{sec:preliminaries}, we recall the basic notation, definitions and results for $\ITM$s, and the two main results from \cite{drach2026transversalityintervaltranslationmaps} (the Linear Dependence of Return Map Vectors Lemma~\ref{lem:lin-dep-rj} and the Perturbation Lemma \ref{lem:pert-lem}). In Section \ref{sec:non-wandering}, we show that the set $\overline{X} \setminus \{1\}$ is equal to the non-wandering set of $T$. In Section \ref{sec:stability-overview}, we give an overview of stability, the ACC and the Matching conditions. Finally, in Section \ref{sec:proof-stability=acc+m} we prove the Characterisation of Stability Theorem (stated there as Theorem~\ref{thm:stability=acc+m}).

\section{Preliminaries}
\label{sec:preliminaries}

\subsection{Notation and conventions} 
\label{subsec:not-conv}
In this subsection, we recall some of the main conventions and notation for $\ITM$s. Since the maps we are considering are discontinuous at finitely many points, we will adopt the standard convention of considering every discontinuity $\beta$ of a map $T$ as two points $\beta^- < \beta^+$ and define $T(\beta^-) := \lim_{x \uparrow \beta}$ and $T(\beta^+) := \lim_{x \downarrow \beta}$. We double the preimages of the point $\beta$ in the same way. The images and preimages of $\beta^-, \beta^+$ will be called \textit{signed points}. In a similar way, we can define $x^- < x^+$, $T(x^-)$, $T(x^+)$, etc.\ for any point $x \in I$.

\begin{definition}
\label{def:touching}
We say that a pair of signed points $(a,b)$ \textit{touches} (or \textit{is touching}) if the set $\{a,b\}$ is equal to $\{x^+, x^-\}$ for some point $x \in I$. In that case, we will write $a \sim b$.
\end{definition}
\noindent The \textit{critical set} $\mathcal{C}$ of $T$ is defined as follows:
\[
\mathcal{C} := \{\beta_1^-, \beta_1^+,\dots,\beta_{r-1}^-, \beta_{r-1}^+\}.
\]
We will refer to the elements of $\mathcal{C}$ as either the critical points or the discontinuities, depending on the context. We will sometimes use the labels $\beta_0^+ := 0^+$ and $\beta_r^- := 1^-$, but we do not consider them as critical points. By $\mathcal{C}^+$, we denote the set of all $+$-type points in $\mathcal{C}$, and by $\mathcal{C}^-$ the set of all $-$-type points in $\mathcal{C}$. 

Most of the time, we do not need to know the index of a discontinuity with respect to the order in $I$ nor whether the discontinuity is of $+$-type or $-$-type. That is why we will often use labels $\beta$, $\betas$ and $\betass$ to denote the discontinuities we are dealing with. If we care about the sign of a discontinuity, we will use the labels $\beta^+, \betas^-$, etc. In that case, we will use the same label without a sign to denote discontinuity of $T$ corresponding to the signed discontinuity, e.g.\ we denote by $\beta$ the discontinuity such that $\beta^+$ ($\beta^-$) is the $+$-part ($-$-part) of $\beta$.

The perturbation of (or a map sufficiently close to) some starting map $T$ will be denoted by $\Tilde{T}$. Many of the most important objects associated to a map $T$, e.g.\ critical points, the set $X$ and boundary points of intervals contained in $X$, will have well-defined continuations for sufficiently small perturbations of $T$. For any such object $Z$, we will denote its continuation by $\Tilde{Z}$, e.g.\ $\Tilde{\beta}^+$, $\Tilde{X}$, $\Tilde{J}$ and similar.

We associate to each point $x \in I$ its \textit{itinerary}. The itinerary of $x$ is an infinite sequence of integers $(i_0(x), i_1(x), \dots, i_n(x), \dots)$, with $1 \le i_n(x) \le r$, where $i_n(x) = s$ means that $T^n(x) \in I_s$. We will often use `itinerary up to time $n$' to refer to the first $n$ elements of this sequence. 

Finally, for a point $x$ we will refer to the set $O(x) := \{ x, T(x), T^2(x), \dots \}$ as the \textit{$T$-orbit} of $x$. Moreover, if $S$ is a subset of $I$, we will refer to the set $O(S) := \bigcup_{x \in S} O(x)$ as the $T$-orbit of $S$. If the map $T$ is clear from the context, we will omit it and simply use the term `orbit'. We will refer to the set $O(x,n) := \{ x, \dots, T^{n-1}{x} \}$, where $n \ge 0$, as the orbit up to time $n$ of $x$, and to the set $O(S,n) := \bigcup_{x \in S} O(x,n)$ as the orbit up to time $n$ of $S$.

\subsection{Basic results and definitions}

\label{subsec:basic-res-def}
In this subsection, we recall the main definitions and elementary facts for $\ITM$s. For proofs, we refer to Section $2$ of \cite{drach2026transversalityintervaltranslationmaps}. We start with the following useful definition:

\begin{definition}
\label{def:c1c2}
For $T \in \ITM(r)$, we define the following sets:
\begin{itemize}
    \item $\mathcal{C}_1$ is the set of all $\beta \in \mathcal{C}$ that eventually land on a discontinuity;
    \item $\mathcal{C}_2$ is the set of all $\beta \in \mathcal{C}$ that never land on a discontinuity, but are eventually periodic;
    \item $\mathcal{C}_0 = \mathcal{C}_1 \cup \mathcal{C}_2$;
    \item $\mathcal{C}_i^{\pm} = \mathcal{C}^{\pm} \cap \mathcal{C}_i$, for $i \in \{ 0, 1, 2 \}$.
\end{itemize}
\end{definition}

Periodic points of $\ITM$s form entire intervals of periodic points that are called maximal periodic intervals:

\begin{definition}
\label{def:max-per-int}
A maximal interval $J \subset I$ consisting of periodic points with the same itinerary is called a \textit{maximal periodic interval}. It is clear that $J$ is a half-open interval and that its boundary points have the property that they land on discontinuities of $T$ or on the boundary points of $I$. Moreover, no point in the interior of $J$ lands on a discontinuity. 
\end{definition}

Since there are $r-1$ discontinuities for a map $T \in \ITM(r)$, it is clear that there can only be finitely many maximal periodic intervals with pairwise disjoint orbits.

The second type of intervals we are interested in are the connected components of $X$ that are equal to an interval. It will be convenient to use the following shorthand for such intervals:

\begin{definition}
\label{def:comp-int}
We will say that an interval $J$ is an \textit{interval component} of $X$ if $J$ is equal to a connected component of $X$.
\end{definition} 

Note that an interval component of $X$ can be a maximal periodic interval, but that a maximal periodic interval can be strictly contained in an interval component of $X$.

We now recall several elementary results about general $\ITM$s. The following is a fundamental classification of the dynamics of every point in $I$:

\begin{lemma}[Orbit Classification Lemma]
\label{lem:orb-class}
For each point $x \in I$, at least one of the following three possibilities holds:
\begin{enumerate}
\item (Precritical) $x$ lands on a discontinuity of $T$;
\item (Preperiodic) $x$ lands on a periodic point of $T$;
\item (Accumulation) $x$ accumulates on a discontinuity of $T$. More precisely, there exist discontinuities $\beta, \betas \in \mathcal{C}$ such that $x$ accumulates on $\beta$ from the left and on $\betas$ from the right. \qed
\end{enumerate} 
\end{lemma}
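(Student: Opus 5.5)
The plan is to follow the forward orbit $x, T(x), T^2(x), \dots$ of a point $x$ for which neither alternative (1) nor (2) holds, and to show that (3) must hold. So assume $x$ never lands on a discontinuity and never lands on a periodic point.

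\textbf{Step 1: the orbit is injective.} The orbit of $x$ consists of pairwise distinct points. Indeed, if $T^m(x) = T^n(x)$ with $m < n$, then $T^m(x)$ would be periodic, contradicting the failure of (2). Hence $O(x)$ is infinite, and by compactness of $[0,1]$ it has an accumulation point $y \in [0,1]$.

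\textbf{Step 2: a non-critical point cannot be an accumulation point.} The key claim is that if $y$ is accumulated by the orbit from one side (say from the right), then $y^+$, or a forward image of it, must be a discontinuity or a boundary point $0^+, 1^-$.

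Suppose instead that $y^+$ and all of its images avoid discontinuities for the relevant times. Pick $m < n$ with $T^m(x), T^n(x)$ very close to $y$ on the right. Then the interval between $T^m(x)$ and $T^n(x)$ is translated rigidly by $T$ for as long as it contains no discontinuity. Write $d := T^n(x) - T^m(x)$.

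Consider the points $T^{m+k}(x)$ and $T^{n+k}(x)$. As long as no discontinuity lies between them, they stay at the same distance $|d|$ and share an itinerary. Since $T^m(x)$ and $T^n(x)$ are distinct and their separation stays fixed, a rigidity argument applies: the points $T^{m+j(n-m)}(x)$ move monotonically by steps of $d$. This forces either that they eventually meet a discontinuity between consecutive points, or that the orbit exits $I$, which is impossible.

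A cleaner way to organise this is via maximal intervals of continuity of $T^k$. Let $J_k$ be the maximal interval containing $x$ on which $T^k$ is a translation. If the lengths $|T^k(J_k)|$ stay bounded below, then the images $T^k(J_k)$ are intervals of fixed positive length in a bounded interval. Two of them must overlap, and this produces a periodic interval containing an image of $x$, contradicting the failure of (2). So the endpoints of $J_k$ are determined by ever-later hits of discontinuities, and the images of $x$ come arbitrarily close to discontinuities. This is the step I expect to be the main obstacle: making the overlap argument produce a genuine periodic point (the overlap gives $T^{k}(J)\cap T^{l}(J)\neq\emptyset$ with the same branch, and one needs the translation amounts to force a periodic orbit).

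\textbf{Step 3: one-sided accumulation.} Finally, refine Step 2 to one-sided accumulation. The distance from $T^k(x)$ to the nearest discontinuity on the right tends to $0$ along a subsequence. Since there are finitely many discontinuities, some single $\betas$ is approached from the right. The same holds on the left with some $\beta$.

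Boundary points $0, 1$ are excluded as limits: points accumulating at $0$ from the right, or at $1$ from the left, would push the overlap argument through, because $T$ near $0$ and near $1$ is a translation into the interior. This gives (3).
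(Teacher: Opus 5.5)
Your ``cleaner'' version of Step 2 has the right skeleton, but the proof breaks off at exactly the step you flag, and with your definition of $J_k$ it does not go through. The maximal intervals on which $T^k$ is a translation need not be nested. Moreover, an overlap between images $T^k(J_k)$ and $T^l(J_l)$ of \emph{different} intervals gives you nothing.

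Take instead $J_k=[a_k,b_k)$ to be the maximal interval of points sharing the itinerary of $x$ up to time $k$. These intervals are nested. So if $\inf_k|J_k|>0$, then $J=\bigcap_k J_k$ is a non-degenerate interval on which every $T^n$ is a translation, and pigeonhole gives $T^m(J)\cap T^{m+p}(J)\neq\emptyset$ for some $m\ge0$, $p\ge1$. Put $K=T^m(J)$ and write $T^p|_K=\mathrm{id}+d$ with $|d|<|K|$. The map $T^p$ is again a translation on $T^p(K)=K+d$, since this is an iterate of $J$. It agrees with $\mathrm{id}+d$ on the non-empty overlap $K\cap(K+d)$. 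Inductively $T^{jp}(K)=K+jd\subset[0,1)$ for all $j$, so $d=0$ and $T^m(x)$ is periodic, a contradiction.

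The first half of Step 2 (the claim about $y^+$ and the ``rigidity argument'') should be dropped. It stops precisely when a discontinuity falls between $T^{m+k}(x)$ and $T^{n+k}(x)$, which is the case that matters. Also, the claim about $y^+$ is essentially the lemma itself applied to $y^+$.

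Second, Step 3 only asserts one-sided accumulation on both sides. The missing observation is that $|J_k|\to0$ forces \emph{both} $b_k\downarrow x$ and $a_k\uparrow x$.
\begin{itemize}
\item For large $k$ we have $b_k<1$, so maximality gives $i_k<k$ and $j_k\in\{1,\dots,r-1\}$ with $T^{i_k}(b_k^-)=\beta_{j_k}^-$.
\item Since $T^{i_k}$ is a translation on $J_k$, we get $0<\beta_{j_k}-T^{i_k}(x)=b_k-x\to0$.
\item Because $x$ is not precritical, each fixed time gives a positive distance to the discontinuities. Hence $i_k\to\infty$, and by pigeonhole a single $\beta_j$ is approached from the left by infinitely many orbit points.
\item The same reasoning with the $a_k$ gives a $\betas$ approached from the right. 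If $x=0$, run this for $T(x)$ instead, which is nonzero.
\end{itemize}

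In this formulation $0$ and $1$ need no special treatment, since all images stay in $[0,1)$ because $T(I)\subset I$, and $a_k,b_k\to x$. Your claim that they ``are excluded as limits'' is false in general. If the orbit accumulates on $\beta_j$ from the right and $\gamma_{j+1}=-\beta_j$, it also accumulates on $0$ from the right.
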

Recall the definition of a first return map:
\begin{definition}[First return map]
\label{def:rj}
For an interval $J \subset X$, define $R_J$ to be the \emph{first return map} (or simply the \emph{return map}) to $J$ under $T$, i.e.\ for every $x \in J$ that returns to $J$, we define $R_J(x) := T^k(x)$ for the smallest integer $k = k(x) \ge 1$ such that $T^k(x) \in J$.
\end{definition}
The following is a simple lemma about the first return maps for intervals in $X$:
\begin{lemma}
\label{lem:rj-facts}
The domain of $R_J$ is the entire interval $J$. $R_J: J \to J$ is bijective and $J$ is partitioned into finitely many maximal half-open subintervals such that no point in their interiors lands on a discontinuity of $T$. \qed
\end{lemma}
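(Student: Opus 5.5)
The statement is Lemma~\ref{lem:rj-facts}. Here $J \subset X$ is an interval, and we may take it half-open, $J = [a,b)$, consistent with the conventions for maximal periodic intervals and interval components. The plan is to get surjectivity and injectivity of $R_J$ first, then the partition, and only then that every point of $J$ returns.

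\textbf{Surjectivity.} Since $X = \bigcap_n T^n(I)$ and each $X_n$ is a finite union of intervals, $T(X) = X$. Indeed, if $y \in X$, then for each $n$ there is a preimage of $y$ in $X_n$. There are at most $r$ candidate preimages $y-\gamma_i$ of $y$, so one of them lies in infinitely many of the nested sets $X_n$, hence in $X$. Thus every $y \in J$ has a backward orbit $y = y_0, y_{-1}, y_{-2}, \dots$ inside $X$.

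\textbf{Injectivity.} Pick intervals of positive length in $J$ and consider the induced dynamics. $T$ is a piecewise translation, hence preserves Lebesgue measure on images of pieces where it is injective. I would avoid measure arguments for the non-injective parts and argue directly instead. Suppose $R_J(x) = R_J(x') = y$ with $x \neq x'$. Pulling back along the two orbits, the last time they differ gives two distinct points $u \neq u'$ outside $J$ (or in $J$ at the start) with $T(u) = T(u')$. The key claim I would aim for is that a point $y \in X$ has a unique preimage in $X$ except for finitely many $y$. If two branches $I_i, I_j$ overlap on an interval of images inside $X$, then the Lebesgue measure of $X_n$ drops by a fixed amount at each step, which contradicts $X$ having interior. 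I expect this to be the main obstacle. It needs the finite-type structure implicit in having an interval $J \subset X$, or at least local invariance of measure on the component, and I would try to prove that $T$ restricted to $X$ is injective away from the finitely many orbits of endpoints.

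\textbf{Partition.} Once injectivity holds, the partition follows the standard IET argument. The points of $J$ whose orbit hits a discontinuity or an endpoint of $J$ before returning are finitely many, because each of the $r-1$ discontinuities and the endpoints $a,b$ has finitely many preimages under the injective branches up to the bounded return time. These points cut $J$ into finitely many half-open subintervals. On each of them the itinerary up to the return time is constant, so $R_J$ acts as a translation there, and no interior point lands on a discontinuity before returning.

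\textbf{Every point returns.} This is where the boundedness used in the partition step must come from, so it has to be established alongside it. For $x \in J$, choose a small interval $K \ni x$ in $J$. Its forward images under the translation pieces are intervals of the same length while no discontinuity is hit, or they split into pieces, all inside $X$. By the Orbit Classification Lemma~\ref{lem:orb-class} together with Poincaré recurrence for the measure-preserving bijection $T|_X$ (a consequence of injectivity), almost every point of $K$ returns. The half-open, piecewise translation structure then upgrades this to every point. Return times are locally constant off the finitely many exceptional points, which gives the bound.
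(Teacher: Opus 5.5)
This lemma is only quoted in the paper (its proof is in the companion paper), so I am judging your argument on its own. Your architecture is the right one: injectivity of $T$ along the orbit of $J$, followed by the standard IET first-return argument. But the step that carries all the ITM-specific content, injectivity, is left open, and the rest of the proposal depends on it.

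Your measure heuristic is fine in substance. Write $\lambda$ for Lebesgue measure. An interval $K\subset T(X\cap I_i)\cap T(X\cap I_j)$ with $i\neq j$ also lies in $T(X_n\cap I_i)\cap T(X_n\cap I_j)$ for every $n$, so $\lambda(X_{n+1})\le\lambda(X_n)-\lambda(K)$ for all $n$, which is absurd. More directly, $T(X)=X$ gives
\[
\lambda(X)=\lambda(T(X))\le\sum_{i=1}^r\lambda\bigl(T(X\cap I_i)\bigr)=\lambda(X),
\]
so the sets $T(X\cap I_i)$ overlap only in null sets. This only rules out overlaps of positive measure. Instead of bridging to pointwise injectivity, you aim at a global claim about all of $X$ (unique preimages up to finitely many exceptions), which you neither establish nor need.

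The missing observation is the bridge. Let $X^\flat$ be the set of $u\in X$ with $[u,u+\delta)\subset X$ for some $\delta>0$. Since $T$ is right-continuous and $T(X)\subset X$, the set $X^\flat$ is forward invariant, and it contains $J$. Suppose $u\neq u'$ in $X^\flat$ had $T(u)=T(u')$. Then $u\in I_i$ and $u'\in I_j$ with $i\neq j$. For small $\delta$ the intervals $[u,u+\delta)\subset X\cap I_i$ and $[u',u'+\delta)\subset X\cap I_j$ have the common image $[T(u),T(u)+\delta)$, an overlap of positive measure. Hence $T$ is injective on $X^\flat$, which contains every orbit you ever pull back.

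Three further steps are circular or missing as written.

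\emph{Finiteness of cut points.} In the partition step you get finiteness of the cut points from ``the bounded return time''. But boundedness is only obtained in your last step, from that same finiteness. Injectivity on $X^\flat$ breaks the circle without any a priori bound on times. Suppose $u\neq u'$ in $J$ reach the same point $p$ (a discontinuity, or an endpoint $a$ or $b$ of $J$) at times $k<k'$ before returning. Then $T^{k'-k}(u')=u\in J$ is an earlier return of $u'$, a contradiction; and $k=k'$ forces $u=u'$. So there is at most one cut point per $p$.

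\emph{Every point returns.} Each component $K$ of $J$ minus the cut points moves by a single translation, and when its image first meets $J$ it lies inside $J$. This first return must occur: otherwise injectivity makes the translates $T^n(K)$, $n\ge 0$, pairwise disjoint intervals of length $\lambda(K)>0$ in $[0,1)$. This replaces your appeal to Poincar\'e recurrence for a ``measure-preserving bijection $T|_X$'', since bijectivity of $T$ on all of $X$ was never shown. It also replaces the unexplained upgrade from almost every point to every point.

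\emph{Surjectivity of $R_J$.} Your ``surjectivity'' paragraph proves $T(X)=X$, not surjectivity of $R_J$. The existence of a backward orbit of $y\in J$ inside $X$ does not by itself show that this orbit passes through $J$ again. Surjectivity of $R_J$ follows from a length count instead. $R_J$ is an injective translation on each of finitely many half-open pieces, so $R_J(J)$ is a finite union of half-open intervals in $J$ of total length $\lambda(J)$, and hence equals $J$.
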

The return map $R_J$ is continuous on these subintervals of $J$, so for simplicity we refer to these intervals as `continuity intervals of $R_J$'. We usually use $x$ and $y$ to denote the boundary points of $J$, so that $J = [x,y)$.

We use the following notation for such interval components $J$. By Lemma \ref{lem:rj-facts}, there are finitely many points $a_{1}, \dots, a_{N-1}$ in the interior of $J$ that land on discontinuities before returning to $J$. It is also convenient to define $a_{0} := x$ and $a_{N} := y$. The continuity intervals of $R_J$ are denoted by $J_1, \dots, J_N$. The return time of $J_j$ to $J$ is denoted by $r_j$, for $1 \le j \le N$, and the landing times of $a_j$ to discontinuities of $T$ are denoted by $l_j$, for $1 \le j \le N-1$.
Finally, Lemma \ref{lem:J-dynamics} gives some elementary dynamical properties for the interval components of $X$. This result will be implicitly used throughout the paper.
\begin{lemma}
\label{lem:J-dynamics}  
Let $T \in \ITM(r)$. Then the following two properties hold: 
\begin{enumerate}[label=(\alph*)]
    \item Every interval component of $X$ is of the form $[T^{k_1}(\beta^+), T^{k_2}(\betas^-))$ for some $\beta^+, \betas^- \in X \cap \mathcal{C}$ and $k_1,k_2 \ge 0$;
    \item If $T^{l_1}(\beta)$ is an interior point of an interval component of $X$ for some $\beta \in X \cap \mathcal{C}$ and $l_1 \ge 0$, then there exist $\betas \in X \cap \mathcal{C}$ and $l_2 \ge 0$ such that $T^{l_1}(\beta) \sim T^{l_2}(\betas)$. \qed
\end{enumerate}
\end{lemma}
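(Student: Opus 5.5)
The plan is to prove both parts by the same \emph{backward-chain} argument. It rests on two facts: $T(X)=X$, and $T$ acts as a translation away from discontinuities. I would first record that $T(X)=X$. One inclusion follows from $T(X_n)=X_{n+1}$. For the reverse, if $w\in X$, then each $X_n$ contains a point of $T^{-1}(w)$ lying in $X_{n}$. Since $T^{-1}(w)$ has at most $r$ points, some preimage lies in infinitely many, hence all, of the nested sets $X_n$. So every point of $X$ has a preimage in $X$, and the same holds for signed points by taking one-sided limits inside the finite unions of intervals $X_n$.

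For (a), let $J=[x,y)$ be an interval component of $X$; I treat the left endpoint $x$, and $y$ is symmetric with $-$-signs. Choose $z_1\in X$ with $T(z_1^+)=x^+$, then $z_2\in X$ with $T(z_2^+)=z_1^+$, and so on. Suppose some $z_i$ is a discontinuity $\beta$, so that $z_i^+=\beta^+\in X\cap\mathcal C$. Then $x=T^{i}(\beta^+)$, as required. (The boundary point $0^+$ can be absorbed here: it is only reached as the image of some $\beta_{j-1}^+$, or is itself the label $\beta_0^+$.)

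Otherwise no $z_i$ is a discontinuity. Then $T$ is continuous from both sides at each $z_i$, so $T$ translates small two-sided neighbourhoods of $z_i$. A left neighbourhood of $z_i$ cannot meet $X$ in a set accumulating at $z_i$: if it did, its image would meet $X$ just to the left of $x$, using $T(X)\subset X$ and the translation structure. Hence each $z_i$ is itself a left endpoint of a component of $X$, and no left-neighbourhood of it lies in $X$.

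There are now two sub-cases. In the first, the chain closes up, i.e.\ $z_i=z_j$ for some $i<j$. Then $x$ lies on a cycle of length $p$ along which $x^-$ never hits a discontinuity. So $T^p$ is the identity on some interval $(x-\varepsilon,x+\varepsilon)$, and the points just left of $x$ are periodic, hence lie in $X$. This contradicts $x$ being a left endpoint.

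In the second sub-case the $z_i$ are pairwise distinct. This is the step I expect to be the main obstacle. The idea is to show that the branches $T^i$ carrying a right-neighbourhood of $z_i$ onto a right-neighbourhood of $x$ have a uniform length $\delta>0$. Such a bound would give infinitely many pairwise disjoint intervals of length $\delta$ in $I$, which is absurd.

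To get $\delta$, I would argue by contradiction. If the lengths shrink, then the right-neighbourhoods $[z_i,z_i+\delta_i)$ are cut by discontinuities at times $<i$. This means forward images of discontinuities (of $+$-type, lying in $X$) accumulate on $x$ from the right inside $J$. I would then use the Orbit Classification Lemma together with the finiteness of $\mathcal C$ to extract a discontinuity whose orbit actually passes through $x^+$. Failing that, I would replace the backward chain by the left endpoints $x_n$ of the components of $X_n$ containing $J$. These left endpoints are of the form $T^{k}(\beta^+)$ by induction on $n$, and the aim is to show that the sequence $x_n$ stabilises.

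For (b), let $p=T^{l_1}(\beta^+)$ lie in the interior of an interval component $J$ of $X$; the case of $\beta^-$ is symmetric. Then $p^-$ is a signed point of $X$, since $J$ contains a left neighbourhood of $p$. I would run the same backward chain for $p^-$, choosing preimages in $X$ of $-$-type signed points. If it reaches a discontinuity $\betas^-$ after $l_2$ steps, then $T^{l_2}(\betas^-)=p^-\sim p^+=T^{l_1}(\beta)$, which is what we want.

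If the chain never reaches a discontinuity, the same dichotomy as in (a) applies. If it closes up, then the cycle through $p^-$ avoids discontinuities and pairs with the forward orbit of $\beta^+$, landing on the right side. From this I would derive either a critical orbit through $p^-$ or a contradiction with the structure of periodic intervals in Definition~\ref{def:max-per-int}. If the chain does not close up, the same uniform-$\delta$ argument gives a contradiction.
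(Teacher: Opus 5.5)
Your proposal has genuine gaps in both parts.

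\textbf{Part (a).} The case you flag as the main obstacle, pairwise distinct $z_i$, is left open; the uniform-$\delta$ and $x_n$-stabilisation ideas are never carried out. The missing ingredient is the finiteness of the set of interval components of $X$ (Lemma~\ref{lem:x-structure}).

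You have already argued that, if no $z_i$ is a discontinuity, every $z_i$ is the left endpoint of an interval component of $X$. So the $z_i$ range over a finite set, the chain must repeat, and your closing-up argument finishes the proof. Sub-case 2 simply cannot occur.

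The same lemma is also needed for two steps you treat as automatic:
\begin{itemize}
\item That $X$ does not accumulate on $x$ from the left. Left endpoints of the intervals of $A_1$ are not in the Cantor part, and your ``left-endpoint'' step for $z_i$ rests on this.
\item That you can choose $z_{i+1}$ with a genuine right neighbourhood $[z_{i+1},z_{i+1}+\delta)\subseteq X$. One-sided limits in the $X_n$ only give $[z,z+\delta_n)\subseteq X_n$, with $\delta_n$ possibly tending to $0$. Instead, for small $\epsilon$ write $[x,x+\epsilon)$ as the union of the at most $r$ translates of the sets $X\cap[u_j,u_j+\epsilon)$, where $T(u_j^+)=x^+$. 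By Lemma~\ref{lem:x-structure}, each such set is, near $u_j$, either a full right neighbourhood or nowhere dense, and finitely many nowhere dense sets cannot cover an interval.
\end{itemize}

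\textbf{Part (b).} Here the plan does not work. Since $p$ is interior, the $w_i$ need not be endpoints, so neither the finiteness argument nor the disjointness behind the uniform-$\delta$ idea applies.

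Backward chains of interior points that avoid discontinuities and never repeat really do exist, e.g.\ for a generic point of $J$ when $R_J$ is an irrational rotation. Likewise, $p$ lying on a cycle inside a periodic interval is consistent on its own. So no argument that ignores the hypothesis $p^+=T^{l_1}(\beta^+)$ can give a contradiction.

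The missing idea is to use that hypothesis together with injectivity of $T$ on $X$. This holds e.g.\ for finite type maps, where $T|_X$ is a bijection of a finite union of half-open intervals, as used in the proof of Lemma~\ref{lem:not-fixed}. Then argue by induction on $l_1$, comparing one backward step of $p^-$ with that of $p^+$. Let $u=T^{l_1-1}(\beta^+)$ and take $v^-\in X$ with $T(v^-)=p^-$.
\begin{itemize}
\item If $v^-\sim u$, then either $u=\beta_j^+$ is a discontinuity, in which case $p^-=T(\beta_j^-)$ with $\beta_j^-\in X$, or $u$ is again an interior point of an interval component and you induct.
\item If $v^-\not\sim u$, then either $v^-$ is itself a $-$-type discontinuity and you are done, or $T$ is continuous at $v$. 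In that case $T(v^+)=p^+=T(u)$, so injectivity forces $v\notin X$. Hence $v$ is a right endpoint of an interval component, and part (a) gives $v^-=T^{k}(\betas^-)$ with $\betas^-\in X\cap\mathcal C$.
\end{itemize}
This terminates after at most $l_1$ backward steps, and no infinite chains are needed.
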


\subsection{Dynamically defined vectors and the Perturbation Lemma}
\label{subsec:vec+pert-lemma}

In this subsection, we recall the product notation, the dynamically defined vectors and the two main results of \cite{drach2026transversalityintervaltranslationmaps}, the Linear Independence of Return Map Vectors \ref{lem:lin-dep-rj} and the Perturbation Lemma \ref{lem:pert-lem} that will be used in the proof of Theorem \ref{thm:stability=acc+m}.

For each  $T$,  $x \in I$, $s=1,\dots,r$ and $n\ge 1$, let:
\[
k_s(x,n,T) := \# \{ T^j(x) \in I_s; 0\le j < n \}.
\]
Thus $k_s(x,n,T)$ represents the number of entries of $x$ into $I_s$ up to time $n$. When the map $T$ is clear from context, we simply write $k_s(x,n)$.

Let $W(r) := \R{}^{r} \oplus \R{}^{r-1}$ be the $(2r-1)$-dimensional real vector space that contains the \textit{coefficient vectors} (introduced below), and let $(\bm{e}_1, \dots, \bm{e}_r)$ and $(\bm{f}_1, \dots, \bm{f}_{r-1})$ be the canonical bases for $\R{}^{r}$ and $\R{}^{r-1}$, respectively. Recall that $\ITM(r)$ is the parameter space of $\ITM$s on $r$ intervals, and that it is a convex polytope contained in $\mathbb{R}^{2r-1}$. We call the elements of this space \textit{parameter vectors}. These vectors also have canonical coordinates coming from the ambient space $\mathbb{R}^{2r-1}$. We will use the shorthand $(\gamma \, \beta)$ for a parameter vector $(\gamma_1 \dots \gamma_r \, \beta_1 \dots \beta_{r-1})$.

Let $\langle \cdot, \cdot \rangle$ be the standard scalar product on $\mathbb{R}^{2r-1}$. Since $W(r) = \R{}^{r} \oplus \R{}^{r-1}$ and $\ITM(r)$ is a subset of $\mathbb{R}^{2r-1}$, it makes sense to write $\langle v, (\gamma \, \beta) \rangle =  \sum_{s=1}^r v_s \gamma_s + \sum_{s=1}^{r-1} v_{s+r} \beta_s$ for a coefficient vector $v = \sum_{s=1}^r v_s \bm{e}_s + \sum_{s=1}^{r-1} v_{s+r} \bm{f}_s \in W(r)$ and a parameter vector $(\gamma \, \beta) \in \ITM(r)$. We call $\langle v, (\gamma \, \beta) \rangle$ the \textit{product} of $v$ and $(\gamma \, \beta)$.

Let $J = [x,y)$ be an interval component (Definition \ref{def:comp-int}) of $X$. The first return map $R_J$ to $J$ is well-defined (Lemma \ref{lem:rj-facts}) and there are finitely many points $a_{1}, \dots, a_{N-1}$ in the interior of $J$ that land on discontinuities before returning to $J$. Let $J_1, \dots, J_{N}$ be the continuity intervals of $R_J$, and let $r_j$ be the return time of $J_j$ to $J$. Let $a_{0}^{+} := x^+$ and $a_{k}^{-} := y^-$ be the boundary points of $J$. For each $1 \le j < N$, let $m_{j}^{+}$ be the number of discontinuities that $a_{j}^{+}$ lands on before returning to $J$ and, for $1 \le k \le m_{j}^{+}$, let $\beta^{+}(j,k)$ be the $k$-th discontinuity along the orbit up to return time to $J$ of $a_j^{+}$. Define $m_{j}^{-}$ and $\beta^{-}(j,k)$ analogously, for $1 \le j < N$ and $1 \le k \le m_{j}^{-}$. For each $1 \le j < N$, the points $\beta^{+}(j,1)$ and $\beta^{-}(j,1)$ are the $+$ and $-$ part of a single discontinuity, which we denote by $\beta(j)$.

In what follows, it will be useful to denote by $\text{ind}(\beta) \in \{1, \dots, r-1\}$ the index of a discontinuity $\beta$ with respect to this order inside $I$.

We now define three types of dynamically defined vectors associated to the orbit of $J$. The first ones are the \textit{first landing vectors}, that correspond to the first time the points $a_1^J, a_2^J, \dots, a_{N_J-1}^J$ land on discontinuities of $T$:

\begin{definition}[First landing vectors]
\label{def:lan-vec}
For $1 \le j < N$, let $l_{j}$ be the landing time of $a_{j}$ to $\beta(j)$ and let $L_{j} \in W(r)$ be the associated vector, called the \emph{first landing vector}:

\[
L_{j} \coloneqq \left(\sum_{s = 1}^r k_s(a_j, l_{j}) \, \bm{e}_s, \, - \bm{f}_{\text{ind}(\beta(j))}\right).
\]
\end{definition}

If $a_j$ is a discontinuity of $T$, i.e.\ if the landing time $l_j$ is zero, then $L_j = \left( 0, - \bm{f}_{\text{ind}(\beta(j))}\right)$ by definition. We call $L_j$ the first landing vector of $a_j$ to $\beta(j)$ because the following holds:

\begin{align*}
0 &= a_j + \sum_{s = 1}^r k_s(a_j, l_{j}) \gamma_s - \beta(j) \\
&= \left(\sum_{s = 1}^r k_s(a_j, l_{j}) \gamma_s - \beta(j)\right) + a_j \\
&= \langle L_j, (\gamma \, \beta) \rangle + a_j.
\end{align*}

The second type of vectors we need to consider are the \textit{critical connection vectors}, which correspond to landings of discontinuities in the orbit of $J$ onto other discontinuities (which are thus also in the orbit of $J$).

\begin{definition}[Critical connection vectors]
\label{def:cc-vec}
For each $1 \le j < N$ and $1 \le k < m_{j}^{+}$, let $q^{+}(j,k)$ be the landing time of $\beta^{+}(j,k)$ to $\beta^{+}(j,k+1)$ and let $C^{+}(j,k) \in W(r)$ be the associated vector, called the \emph{critical connection vector}:

\[
C^{+}(j,k) \coloneqq \left(\sum_{s=1}^r k_s(\beta^{+}(j,k), q^{+}(j,k)) \, \bm{e}_s, \, \bm{f}_{\text{ind}(\beta^{+}(j,k))} - \bm{f}_{\text{ind}(\beta^{+}(j,k+1))}\right).
\]
\end{definition}%
The following holds by construction:

\begin{align*}
0 &= \beta^{+}(j,k) + \sum_{s=1}^r k_s(\beta^{+}(j,k), q^{+}(j,k)) \gamma_s - \beta^{+}(j,k+1) \\
&= \sum_{s=1}^r k_s(\beta^{+}(j,k), q^{+}(j,k)) \gamma_s + \beta^{+}(j,k) - \beta^{+}(j,k+1) \\
&= \langle C^{+}(j,k), (\gamma \, \beta) \rangle.
\end{align*}
Define $C^{-}(j,k) \in W(r)$ analogously for $1 \le j < N$ and $1 \le k < m_{j}^{-}$.

The third type of dynamical vector we need to consider is the \textit{return vectors}, which correspond to the return of points $a_1^{-}, a_1^{+}, \dots, a_{N-1}^{+}, a_{N-1}^{-}$ to the interval $J$.

\begin{definition}[Return vectors]
\label{def:ret-vec}
For each $1 \le j < N$, let $r_{j}^{+}$ be the time at which $\beta^+(j,m_{j}^{+})$ lands into $J$ and let $R_{j}^{+} \in W(r)$ be the associated vector, called the \emph{return vector}:  

\[
R_{j}^{+} \coloneqq \left(\sum_{s=1}^r k_s(\beta^{+}(j,m_{j}^{+}), r_{j}^{+}) \, \bm{e}_s, \, \bm{f}_{\text{ind}(\beta^{+}(j,m_{j}^{+}))}\right).
\]
\end{definition}
By definition, the following holds:
\[
\langle R_j^{+}, (\gamma \, \beta) \rangle = \sum_{s=1}^r k_s(\beta^{+}(j,m_{j}^{+}), r_{j}^{+}) \gamma_s + \beta^+(j,m_{j}^{+}) \in J.
\]
Note that also:
\begin{equation}
\label{eq:R-vec-property}
R_J(a_{j}^+) = \langle R_j^{+}, (\gamma \, \beta) \rangle 
\end{equation}
since $a_{j}$ lands on $\beta^+(j,m_{j}^{+})$. Define $r_{j}^{-}$ and $R_{j}^{-} \in W(r)$ analogously for $1 \le j < N$.

For the boundary points $x^{+} = a_0^{+}$ and $y^- = a^{-}_{N}$, the definitions of the corresponding dynamical vectors depend on whether they land on discontinuities of $T$ before returning to $J$ or not. If $a_0^{+}$ lands on a discontinuity before returning to $J$, then we may analogously as for other $a_j^{+}$, where $1 \le j < N$, define the vectors $L_0$, $C^{+}(0,k)$ and $R^{+}_0$. In the case when $a_0^{+}$ does not land on a discontinuity of $T$ before returning to $J$, we only define the return vector $R_0^{+}$ to $J$:

\[
R_0^{+} := \left(\sum_{s=1}^r k_s(a_0^{+}, r_{0}^{+}) \, \bm{e}_s, \, 0 \right),
\]
where $r_0^+$ is the return time of $a_0^+$ to $J$. Note that in this case:
\[
\langle R_0^{+}, (\gamma \, \beta) \rangle = R_J(a_{0}^+) - a_0.
\]
Analogously, if $a_{N}^{-}$ lands on a discontinuity of $T$, we may define the vectors $L_{N}, C^{-}(N,k)$ and $R^{-}_{N}$. If $a_{N}^{-}$ does not land on a discontinuity of $T$ before returning to $J$, then define $R_{N}^{-}$ analogously as for $a_0^{+}$.

Because the definitions of the dynamical vectors associated to the boundary points of the interval $J$ are different depending on whether they land on discontinuities of $T$ or not, this leads to different statements of Lemma \ref{lem:lin-dep-rj}, depending on whether these landings happen or not. For simplicity, we will assume that these boundary points land on discontinuities of $T$, because this case is more complicated, and we state Lemma \ref{lem:lin-dep-rj} with this assumption.

\begin{lemma}[Linear Dependence of Return Map Vectors]
\label{lem:lin-dep-rj}
Let $T \in \ITM(r)$, and let $J$ be an interval component of $X$. Assume that there exist real coefficients $\alpha_{j}, \alpha_j^{+}, \alpha_j^{-}, \alpha^{+}(j,k), \alpha^{-}(j,k)$ such that:
\begin{align}
\label{eq:lin-dep-sum}
\begin{split}
&\sum_{j=0}^{N-1} \left( \sum_{k=1}^{m_j^{+}-1} \alpha^{+}(j,k) C^{+}(j,k) + \alpha_j^{+} R_j^{+} \right) \\ 
+&\sum_{j=1}^{N} \left( \sum_{k=1}^{m_j^{-}-1} \alpha^{-}(j,k) C^{-}(j,k) + \alpha_j^{-} R_j^{-} \right) \\
+& \left( \sum_{j=0}^{N} \alpha_j L_j \right) = 0.    
\end{split}
\end{align}
Then the following equalities hold:
\begin{align}
\label{eq:lin-dep-equality1}
\begin{split}
&\alpha^{+}(j,1) = \dots = \alpha^{+}(j,m_j^{+}-1) = \alpha_j^{+} \\
&=-\alpha^{-}(j+1,1) = \dots = - \alpha^{-}(j+1,m_{j+1}^{-}-1) = -\alpha_{j+1}^{-},
\end{split}
\end{align}
for all $0 \le j < N$. Moreover, $\alpha_j = \alpha^-(j,1) + \alpha^+(j,1)$ for all $1 \le j < N$.
\end{lemma}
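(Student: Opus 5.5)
The plan is to read each dynamically defined vector as a linear functional $p \mapsto \langle v, p\rangle$ on $\R^{2r-1}$. A relation \eqref{eq:lin-dep-sum} is then an identity between these functionals, and it can be tested against well-chosen parameter vectors $p$, not only against the actual parameters $(\gamma\,\beta)$ of $T$. First I would record the telescoping identities that produce the claimed relations. For each $0 \le j < N$, the chain $L_j, C^+(j,1), \dots, C^+(j,m_j^+-1), R_j^+$ follows the orbit of $a_j^+$ up to its return. Its $\bm f$-parts telescope: $-\bm f_{\text{ind}(\beta(j))}$, then the differences, then $+\bm f_{\text{ind}(\beta^+(j,m_j^+))}$. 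So $L_j + \sum_k C^+(j,k) + R_j^+ = (\rho_{j+1}, 0)$, where $\rho_{j+1} := \sum_s k_s(J_{j+1}, r_{j+1})\bm e_s$ is the visit vector of $J_{j+1}$. Likewise, the chain of $a_{j+1}^-$ gives $L_{j+1} + \sum_k C^-(j+1,k) + R_{j+1}^- = (\rho_{j+1}, 0)$. The difference of these two expressions is exactly the relation encoded by \eqref{eq:lin-dep-equality1}. The lemma therefore says that these differences span all relations, and the task is to show that no other relation exists.

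To prove this, I would separate each piece of orbit by where it sits in the Rokhlin tower over $J$. By Lemma~\ref{lem:rj-facts}, the sets $T^i(J_j)$, $0 \le i < r_j$, are pairwise disjoint intervals. Each vector $C^\pm(j,k)$ or $R^\pm_j$ corresponds to an orbit segment running along the boundary of one column of the tower, between two consecutive discontinuity hits. The $\bm e$-part of such a vector counts the floors of that column lying in each $I_s$, and the $\bm f$-part records which discontinuities bound the segment. Consider a discontinuity $\beta$ hit along these orbits. Because interiors of the $T^i(J_j)$ contain no discontinuities, $\beta$ separates exactly two adjacent floors: one in a column on its left and one in a column on its right. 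Its $+$ and $-$ parts are hit by orbits of $a_{j}^+$ and $a_{j'}^-$ respectively, and Lemma~\ref{lem:J-dynamics}(b) pairs these touching points. The $\bm f_{\text{ind}(\beta)}$-coordinate of \eqref{eq:lin-dep-sum} then gives one linear equation. It relates the coefficients of the (at most four) vectors that start or end at $\beta^\pm$: the chain vector ending at $\beta^+$, the one starting at $\beta^+$, and the same two for $\beta^-$.

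These $\bm f$-equations alone do not force the coefficients along a chain to be equal, because different discontinuities may share an index only if they coincide. So the core step is to supplement them with the $\bm e$-coordinates. I would use an induction along the chains, ordered by time from the first landing. The segment starting at $\beta^+(j,k)$ and the one starting at $\beta^-$ of the same discontinuity run parallel on adjacent floors until one of them hits the next discontinuity. Up to that moment they visit the same intervals $I_s$, so their $\bm e$-contributions agree and cancel exactly when their coefficients are opposite. I would then argue from the last discontinuity hit before returning to $J$ backwards. The return vectors $R^\pm_j$ end inside $J$, whose position is free, so perturbing $(\gamma\,\beta)$ in a direction that moves only one floor forces its coefficient pairing $\alpha_j^+ = -\alpha_{j+1}^-$. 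Propagating this backwards through each critical connection gives the chain equalities $\alpha^+(j,k) = \alpha_j^+$ and $\alpha^-(j+1,k) = \alpha_{j+1}^-$. Finally, the $\bm f_{\text{ind}(\beta(j))}$-coordinate of \eqref{eq:lin-dep-sum}, in which $L_j$, $C^+(j,1)$ and $C^-(j,1)$ are the only contributors, yields $\alpha_j = \alpha^-(j,1) + \alpha^+(j,1)$.

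The main obstacle is the middle step. The $\bm e$-coordinates only count visits to each $I_s$ and do not record positions, so a priori unrelated orbit segments could have proportional $\bm e$-parts and produce spurious relations. I would first try to exclude this using the tower structure: distinct floors of distinct columns are disjoint. The delicate case is when the same discontinuity is hit by several chains (a $\beta$ with many critical connections in the orbit of $J$), and it needs careful bookkeeping of which floor lies adjacent to which.
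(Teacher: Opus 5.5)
There is a genuine gap, because you prove only the easy direction. Your telescoping computation shows that each $D_j:=\bigl(L_j+\sum_k C^+(j,k)+R_j^+\bigr)-\bigl(L_{j+1}+\sum_k C^-(j+1,k)+R_{j+1}^-\bigr)$ is a relation. The $\bm f_{\text{ind}(\beta(j))}$-coordinate of \eqref{eq:lin-dep-sum} also gives $\alpha_j=\alpha^+(j,1)+\alpha^-(j,1)$. This works because the floors $T^i(J_j)$ are pairwise disjoint (which follows from bijectivity of $R_J$), so each signed discontinuity is an endpoint of at most one floor. The content of the lemma, however, is the converse: the $D_j$ span \emph{all} relations. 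There can be up to $2r-1+N$ vectors in the $(2r-1)$-dimensional space $W(r)$, and only the $N$ relations $D_j$ are forced, so this is a maximal-rank (transversality) statement. That is exactly the step you call the main obstacle, and it is left open.

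The mechanisms you sketch for that step do not work.

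(i) The two sides of a discontinuity $\beta_t$ do not run parallel after it. Since $T(\beta_t^-)=\beta_t+\gamma_t$ and $T(\beta_t^+)=\beta_t+\gamma_{t+1}$, the segments starting at $\beta_t^{\pm}$ separate immediately and in general have unrelated itineraries and lengths. Parallel running happens only for $a_j^\pm$ before the first landing, which is why $L_j$ is a single shared vector. Even when two vectors have equal $\bm e$-parts, each $\bm e_s$-coordinate of \eqref{eq:lin-dep-sum} is one sum over all vectors, so it cannot force a pairwise cancellation. Likewise, at a critical connection the $\bm f$-equation in general fixes only the sum of the incoming $+$ and $-$ coefficients. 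So ``propagating backwards'' does not determine the individual coefficients.

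(ii) ``Perturbing $(\gamma\,\beta)$ in a direction that moves only one floor'' means testing \eqref{eq:lin-dep-sum} against some $p\in\R^{2r-1}$ with $\langle R_j^+,p\rangle=\langle R_{j+1}^-,p\rangle=1$ and $\langle v,p\rangle=0$ for every other vector $v$. Such a $p$ would indeed give $\alpha_j^++\alpha_{j+1}^-=0$, but you give no reason it exists. By duality, having enough such test directions is equivalent to the lemma itself: the lemma says precisely that $p\mapsto(\langle v,p\rangle)_v$ maps onto the subspace annihilated by the $D_j$. That freedom is what the Perturbation Lemma~\ref{lem:pert-lem} packages, and it is what this transversality statement is meant to supply, so assuming it here is circular.

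What is missing is an independent argument that the floor counts in the $\bm e$-coordinates, together with the $\bm f$-equations, admit no solutions outside the span of the $D_j$.
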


In the case when the boundary point $a_0^+$ of $J$ does not land on a discontinuity of $T$, we need to remove $L_0$ and $C^+(0,k)$ from \eqref{eq:lin-dep-sum} and all of the coefficients for $j=0$ from the first line of \eqref{eq:lin-dep-equality1} except $\alpha_0^+$. A similar procedure should be done when $a_N^-$ does not land on a discontinuity.

To state the Perturbation Lemma \ref{lem:pert-lem}, we need the following useful definition:

\begin{definition}
\label{def:trans-fac}
Let $n > 0$ be a natural number. We define the \textit{translation factor} $Tr(x,n)$ at time $n$ of a point $x \in I$ as the translation factor of $T^n$ restricted to $x$, i.e.\ $Tr(x,n) = T^n(x)-x$.
\end{definition}

This definition can easily be extended to intervals $K \subset I$ of points that have the same itinerary up to time $n > 0$, by calling the translation factor $Tr(K,n)$ at time $n$ of $K$ the translation factor of any point $x$ in $K$. Note that $Tr(K,n) = 0$ if and only if $K$ is periodic with period $n$. We denote the translation factors defined for a perturbation $\Tilde{T}$ of $T$ by $\Tilde{T}r$.

\begin{lemma}[Perturbation Lemma]
\label{lem:pert-lem}
Let $T$ be an interval translation map such that $T(I)$ is compactly contained in the interior of $I$. Let $J$ be one of the following: an interval component of $X$ or a maximal periodic interval. Then there exists an $\epsilon_0 > 0$ depending on $J$ and $T$, with the following properties. For every $\epsilon < \epsilon_0$ and every choice of $\epsilon^{\gamma}_1, \dots \epsilon^{\gamma}_{N}, \epsilon^{\beta}_0, \dots \epsilon^{\beta}_{N} \in (-\epsilon, \epsilon)$ there exists a perturbation $\Tilde{T}$ such that $|\Tilde{T} - T| \to 0$ as $\epsilon \to 0$, with $|\cdot|$ being the distance in $\ITM(r)$, and the following holds:

\begin{enumerate}
    \item There exists an interval $\Tilde{J} \subset I$ that is $\epsilon$-close to $J$ and partitioned into intervals $\Tilde{J}_j = [\Tilde{a}_{j-1},\Tilde{a}_j)$, with $1 \le j \le N$, such that $\Tilde{J}_j$ maps forward continuously up to time $r_j$ under the iterates of $\Tilde{T}$ and has the same itinerary up to time $r_j$ as $J_j$ for all $1 \le j \le N$. In the case when $J$ is an interval component of $X$, we may set $\Tilde{a_j} - a_j = \epsilon^{\beta}_j$ for all $0 \le j \le N$;
    \item The difference between the translation factors of $\Tilde{J}_j$ and $J_j$ is $\epsilon^{\gamma}_j$ for all $1 \le j \le N$, i.e.\ $\Tilde{T}r(\Tilde{J}_j,r_j) - Tr(J_j,r_j) = \epsilon^{\gamma}_j$.
\end{enumerate}
Moreover, we have that:

\begin{enumerate}[label=(\alph*)]
    \item Let $T^n(\beta^+) = \betas^+$ be a critical connection such that $\beta^+$ and $T^n(\beta^+)$ are both contained in the $T$-orbit of $J_j$ up to time $r_j$ for some $1 \le j \le N$. We may assume that $\Tilde{\beta}^+$ is still contained in the $\Tilde{T}$-orbit of $\Tilde{J}_j$ up to time $r_j$ and the difference $\Tilde{T}^n(\Tilde{\beta}^+) - \Tilde{\beta}_*^+$ can be chosen arbitrary in $[0,\epsilon)$. Similarly, for critical connections $T^n(\beta^-) = \betas^-$ and the difference $ \Tilde{\beta}_*^- - \Tilde{T}^n(\Tilde{\beta}^-)$;
    \item For every critical connection $T^n(\beta) = \betas$, with $\beta,\betas \notin O(J)$, such that either $\beta \notin X$ or $\beta, \betas$ are part of a single periodic orbit, the difference $\tilde T^n(\tilde \beta) - \tilde \betas$ can be chosen arbitrary in $(-\epsilon,\epsilon)$. \qed
\end{enumerate}
\end{lemma}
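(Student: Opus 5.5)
The plan is to reduce the Perturbation Lemma to a linear-algebra problem in parameter space and solve it with the Linear Dependence of Return Map Vectors Lemma~\ref{lem:lin-dep-rj}. All the quantities we want to control are affine functions of the parameter vector $(\gamma\,\beta)$, as long as the combinatorics (the itineraries of the $J_j$ up to time $r_j$) does not change. Explicitly:
\begin{itemize}
  \item the boundary and cutting points satisfy $a_j=-\langle L_j,(\gamma\,\beta)\rangle$;
  \item the critical connection offsets are $\langle C^\pm(j,k),(\gamma\,\beta)\rangle$, which equal $0$ at $T$;
  \item the images $R_J(a_j^\pm)$ are $\langle R_j^\pm,(\gamma\,\beta)\rangle$;
  \item the translation factors satisfy $Tr(J_j,r_j)=R_J(a_{j-1}^+)-a_{j-1}$.
\end{itemize}
Consider the linear map $\Phi\colon \R^{2r-1}\to\R^M$ sending $(\gamma\,\beta)$ to the vector of all these products. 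The perturbation $\tilde T$ will be $T+\delta$, where $\delta$ is a solution of $\Phi(\delta)=\text{target}$. Here the target prescribes the $\epsilon^\beta_j$ (through the $L_j$), the $\epsilon^\gamma_j$ (through the $R_j^\pm$ together with the $L_j$), and the desired small offsets for the critical connections (through the $C^\pm(j,k)$).

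The first key step is to show that every admissible target lies in the image of $\Phi$. A target lies in the image iff it annihilates every linear relation among the rows of $\Phi$. Lemma~\ref{lem:lin-dep-rj} describes all such relations. For each $0\le j<N$ there is essentially one relation, with one common coefficient along the chain $C^+(j,\cdot),R_j^+$ and the opposite coefficient along $C^-(j+1,\cdot),R_{j+1}^-$; the $L$-coefficients are determined by these. Evaluated on $(\gamma\,\beta)$, this relation telescopes to the identity
\[
R_J(a_{j+1}^-)-R_J(a_j^+) = a_{j+1}-a_j ,
\]
which says that $R_J$ translates $J_{j+1}$ isometrically. So the only constraint on the target is that the prescribed endpoints and translation factors are compatible in this sense. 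That compatibility is exactly what defining the target through $\epsilon^\beta_j$ and $\epsilon^\gamma_j$ produces, since translation by the perturbed factor automatically preserves lengths. Hence a solution $\delta$ exists. Choosing a fixed right inverse of $\Phi$ onto its image gives $|\delta|\le C\epsilon$, so $|\tilde T-T|\to0$.

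The second step is to check that $\tilde T=T+\delta$ really realises the prescribed dynamics. Because $T(I)$ is compactly contained in the interior of $I$, the translated parameters stay in the polytope $\ITM(r)$ for $\epsilon$ small. For small $\epsilon$, the interior points of each $J_j$ keep their itinerary up to time $r_j$ by continuity, since they stay a definite distance from the discontinuities. The only danger lies at the orbits of the $a_j$, which pass exactly through discontinuities. There I will choose the critical-connection offsets in (a) with the correct sign: in $[0,\epsilon)$ for $+$-points and the mirror condition for $-$-points. This keeps $\tilde\beta^+(j,k+1)$ on the correct side, so that the one-sided orbits of $\tilde a_j^\pm$ follow the same itineraries as before. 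Those offsets are free coordinates of the target modulo the telescoping relation above, so they impose no further constraint. This yields conclusions (1), (2) and (a).

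The step I expect to be the main obstacle is (b): critical connections $T^n(\beta)=\betas$ with $\beta,\betas\notin O(J)$. For these I would enlarge $\Phi$ by the corresponding connection vectors and show that this creates no new linear relations. The first idea to try is the following.
\begin{itemize}
  \item If $\beta\notin X$, then $\beta$ has a coordinate $\bm f_{\text{ind}(\beta)}$ that does not appear in the orbit of $J$, or appears only in a way that cannot cancel. Then a relation would force its coefficient to vanish, by an argument of the same type as in Lemma~\ref{lem:lin-dep-rj}.
  \item If $\beta,\betas$ lie on a single periodic orbit disjoint from $O(J)$, then the return vectors of that orbit do not interact with the telescoping relations of $J$.
\end{itemize}
If genuine independence fails, the fallback is to perturb these connections sequentially, each by a further amount much smaller than $\epsilon$, along directions in the kernel of the $J$-part of $\Phi$. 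This keeps (1), (2) and (a) intact.
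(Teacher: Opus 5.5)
\textbf{Verdict: the approach for (1), (2) and (a) is sound, but (b) is a genuine gap.} The paper only imports this lemma (the proof is in the companion transversality paper), so there is no in-text proof to compare against.

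\textbf{Parts (1), (2), (a).} The vectors $L_j$, $C^\pm(j,k)$, $R^\pm_j$ and Lemma~\ref{lem:lin-dep-rj} are set up exactly for the route you take. For $0\le j<N$, both $L_j+\sum_k C^+(j,k)+R^+_j$ and $L_{j+1}+\sum_k C^-(j+1,k)+R^-_{j+1}$ equal the pure $\bm e$-vector of the itinerary of $J_{j+1}$. These $N$ identities span all row relations. Non-negative chain offsets keep both signed endpoints of every $\tilde T^i(\tilde J_j)$, hence the whole interval, on the correct branch. There are two small omissions:
\begin{enumerate}
\item Lemma~\ref{lem:lin-dep-rj} only determines $\alpha_j$ for $1\le j<N$. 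After subtracting the telescoping relations you must still exclude $\alpha_0L_0+\alpha_NL_N=0$. Comparing $\bm f$-parts, a non-trivial such relation forces $L_0=L_N$, hence $x=y$.
\item A maximal periodic interval need not be a component of $X$, so Lemma~\ref{lem:lin-dep-rj} does not cover it as stated. The case $N=1$ needs its own check.
\end{enumerate}

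\textbf{Part (b).} You leave (b) as a plan, and neither of your ideas works as stated.
\begin{enumerate}
\item The $\bm f$-coordinates in the $J$-system are those of all discontinuities in the chains of the $a_j^\pm$. The chains of the minus-points $a_j^-$ and $y^-$ pass through right endpoints of the half-open intervals $T^i(J_j)$. These need not belong to $O(J)$ and may even lie in $\overline X\setminus X$. So a connection with $\beta,\betas\notin O(J)$ and $\beta\notin X$ can share both of its $\bm f$-coordinates with the $J$-vectors.
\item Two connections with the same source and target, such as $\beta^+\to\betas^+$ and $\beta^-\to\betas^-$, have vectors differing only in their $\bm e$-parts. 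These coincide under a matching coincidence as in Example~\ref{ex:stable-map}, and then the two offsets are forced to be equal. So ``no new relations'' cannot hold for arbitrary families of connections, and it cannot be read off from $\bm f$-coordinates.
\item Your fallback of moving along the kernel of the $J$-part of $\Phi$ assumes what must be proved. You need the functional $\langle C,\cdot\rangle$ not to vanish on that kernel, and, for several connections treated in turn, not to vanish on the kernels of those already fixed.
\end{enumerate}

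\textbf{What a proof of (b) needs.} An actual argument must use the hypotheses of (b). One workable piece: suppose $\beta$ is not a chain discontinuity of $J$ and $\betas\neq\beta$. Then $\beta$ is at positive distance from the closures of all segments $T^i(J_j)$, $0\le i<r_j$. Changing in addition only the coordinate $\beta_{\text{ind}(\beta)}$ leaves every $J$-quantity and itinerary unchanged. It shifts $\tilde T^n(\tilde\beta)-\tilde\betas$ by exactly the displacement, so any value in $(-\epsilon,\epsilon)$ is reached after compensating the $O(\epsilon)$ effect of the $J$-perturbation. It remains to show that the hypotheses of (b) give this separation. You must also control the interactions when several connections are prescribed at once, which is how (b) is used in the proof of Theorem~\ref{thm:s-acc}.
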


In (a), the changes are such that the itineraries of the critical points are preserved, while this is not necessarily the case in part (b). The assumption that $T(I)$ is compactly contained in the interior of $I$ holds for every $\ITM$ in a complement of finitely many hyperplanes contained in the boundary of $\ITM(r)$, so this may be assumed without loss of generality, and it allows us not to include several necessary assumptions on the $\epsilon, \epsilon^{\gamma}_1, \dots \epsilon^{\gamma}_{N}, \epsilon^{\beta}_0, \dots \epsilon^{\beta}_{N}$ that guarantee that the perturbed map still has image contained in $[0,1)$.

\section{The non-wandering set}
\label{sec:non-wandering}

As usual, define the non-wandering set $\Omega(T)$ of $T$ to be the set of $x \in I$ so that for each neighbourhood $U$ of $x$ there exists $n>0$ so that $T^n(U)\cap U\ne \emptyset$. The set $X = \bigcap_{n=1}^{\infty} X_n$ is by definition the \textit{attractor} of $T$. In special cases, it is possible for $X$ to accumulate on $1$, which is not contained in $I=[0,1)$. Because of this, we assume the closure $\overline{X}$ of $X$ is defined as the closure of $X$ in $[0,1]$, and not $I$, so that $\overline{X}$ is compact in $[0,1]$. Moreover, we also consider other topological notions as happening in $[0,1]$, e.g.\ the interior of a set or a limit of a sequence.

Our main result of this subsection is that the set $\overline{X} \setminus \{1\}$ is equal to the non-wandering set of $T$, Lemma \ref{lem:nonwandering}. We will use the following lemma, proved in \cite{drach2026transversalityintervaltranslationmaps}, that completely describes the topological structure of the closure (in $[0,1]$) of $X$.

\begin{lemma}
\label{lem:x-structure}
For any map $T$, $\overline{X}$ is equal to $A_1 \cup A_2$, where $A_1$ is a finite union of closed intervals and $A_2$ is a Cantor set. The union is disjoint, except possibly at the right endpoints of the intervals in $A_1$. Moreover, one of the sets $A_1, A_2$ is allowed to be empty. \qed
\end{lemma}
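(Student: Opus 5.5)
The plan is to split $\overline{X}$ according to whether a point lies in a non-degenerate interval of $\overline{X}$. I would set $A_1$ to be the union of the closures of all connected components of $\overline{X}$ with non-empty interior, and $A_2 := \overline{X} \setminus \operatorname{int}(A_1)$. Three things then need proof: (i) $A_1$ has finitely many components; (ii) $A_2$ is compact, totally disconnected and perfect, hence a Cantor set (or empty); (iii) $A_1\cap A_2$ consists only of right endpoints of intervals in $A_1$.

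\textbf{Step (i): finiteness of $A_1$.}
\begin{itemize}
\item First I would check that every non-degenerate component of $\overline{X}$ is the closure of an interval component $J$ of $X$. The reason is that $X$ is a decreasing intersection of the finite unions of half-open intervals $X_n$.
\item By Lemma~\ref{lem:J-dynamics}(a), $J=[T^{k_1}(\beta^+),T^{k_2}(\betas^-))$ with $\beta,\betas\in X\cap\mathcal{C}$.
\item By Lemma~\ref{lem:rj-facts}, $R_J$ is a bijection of $J$ with finitely many continuity intervals and finite return times. Hence the orbit $O(J)$ is a finite union of intervals contained in $X$, and it has measure at least $|J|$.
\item I would group interval components into classes according to the discontinuity $\beta^+$ (and the $-$-type one) at their left (right) endpoint. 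For fixed $\beta$, the components with left endpoint $T^{k}(\beta^+)$ for varying $k$ all lie in a single finite orbit $O(J)$.
\item Since there are only $2(r-1)$ signed discontinuities, only finitely many such orbits occur. Each orbit contributes finitely many components, which gives finiteness.
\end{itemize}
This step, in particular controlling that different $k_1$ do not produce infinitely many distinct components, is where I expect the main obstacle. If the grouping argument is delicate, I would fall back on a measure argument. Distinct finite invariant orbits of components are disjoint up to endpoints. Each one contains an endpoint that is an iterate of one of the finitely many discontinuities. So there are at most $2(r-1)$ of them.

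\textbf{Step (ii): $A_2$ is a Cantor set.}
\begin{itemize}
\item Compactness is immediate, since $\overline{X}$ is closed in $[0,1]$ and $\operatorname{int}(A_1)$ is open.
\item Total disconnectedness holds by construction: every interval in $\overline{X}$ has been absorbed into $A_1$.
\item For perfectness, suppose $z\in A_2$ is isolated in $\overline{X}$, so $z \in X$. Apply the Orbit Classification Lemma~\ref{lem:orb-class} to $z$.
  \begin{itemize}
  \item If $z$ is preperiodic, it would land on a periodic point. Periodic points of an ITM fill maximal periodic intervals lying in $X$, that is, in $A_1$. Since $T|_X$ is onto $X$ with finite-to-one backward branches, $z$ would be an isolated preimage inside $X$. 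This contradicts $z\in X_n$ for all $n$ being a limit of the interval pieces of $X_n$.
  \item In the accumulation case, $z$ accumulates on discontinuities. Pulling back the nested intervals of $X_n$ around $z$ yields points of $X$ arbitrarily near $z$.
  \item The precritical case reduces to the other two after landing.
  \end{itemize}
\item Perfectness of the part of $A_2$ away from $A_1$ then follows as well.
\end{itemize}

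\textbf{Step (iii): the intersection $A_1\cap A_2$.} Interval components are half-open $[x,y)$. Their left endpoints $x$ lie in $X$ with $X$ continuing to the right, so they are interior to $A_1$ from that side and cannot be limits of $A_2$ from the left. The argument is that a left endpoint of $J$ is $T^{k_1}(\beta^+)$, and points of $X$ just to its left would contradict $J$ being a full component by the touching property, Lemma~\ref{lem:J-dynamics}(b). Hence the only possible common points are the right endpoints $y\notin X$, where the Cantor part may accumulate.
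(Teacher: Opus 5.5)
The paper does not prove this lemma; it is stated with \qed{} and imported from the companion paper, so I am judging your outline on its own. Step (i) is essentially sound once it is stated correctly. Take $k_0$ minimal with $T^{k_0}(\beta^+)$ the left endpoint of an interval component $J_0$. Every later $T^{k}(\beta^+)$ lies in the finite union $O(J_0)\subseteq X$, so if it is a left endpoint of a component of $X$, it must be a left endpoint of a component of $O(J_0)$. The components themselves need not lie in $O(J_0)$, as you claim. The real problems are elsewhere.

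\textbf{Your definition of $A_2$ is wrong.} The set $\overline{X}\setminus\operatorname{int}(A_1)$ contains every endpoint of every interval of $A_1$, and these are typically isolated. For any finite type map with $\overline{X}\neq[0,1]$ (e.g.\ Example~\ref{ex:stable-map}), your $A_2$ is a nonempty finite set, hence neither empty nor a Cantor set. It also always puts the left endpoints into $A_1\cap A_2$, contradicting both the statement and your own Step (iii). You need $A_2:=\overline{\overline{X}\setminus A_1}$. This set meets $A_1$ only at endpoints and contains no nondegenerate interval.

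\textbf{Perfectness, the real content of the lemma, is not proved.} ``$z\in X_n$ for all $n$'' gives no contradiction. The components $[c_n,d_n)$ of $X_n$ containing $z$ can shrink to $\{z\}$, and points of $X_n$ near $z$ need not lie in $X$. ``Pulling back the nested intervals'' is the same non-argument. The precritical case also does not reduce to the other two, because isolation is not transported along orbits. Forward images of isolated points need not be isolated, and since $T$ is only right-continuous, left-isolation is lost when you pull back through a discontinuity.

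What is missing is a global argument. One that works runs as follows.
\begin{enumerate}
\item[(a)] $T(X)=X$. Since $T^{-1}(z)$ is finite, some preimage of $z\in X$ lies in infinitely many $X_n$, hence in all of them.
\item[(b)] Suppose $z\in X$, $(z,z+\ell)\cap X=\emptyset$, and $w\in X\cap I_i$ with $T(w)=z$. Then $(w,w+\min(\ell,\beta_i-w))\cap X=\emptyset$, because $T$ translates this interval into $(z,z+\ell)$ and $T(X)\subseteq X$.
\item[(c)] Take a backward chain $z=w_0\leftarrow w_1\leftarrow\cdots$ in $X$. The $w_j$ are distinct: a periodic point has a right neighbourhood of periodic points, which lie in $X$, so it has no right gap. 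Hence the maximal right gaps $(w_j,w_j+\ell_j)$ are pairwise disjoint.
\item[(d)] If $\ell_{j+1}<\ell_j$, then $\beta_i\in(w_{j+1},w_{j+1}+\ell_{j+1}]$ for the piece $I_i\ni w_{j+1}$, with the convention $\beta_r=1$. These half-open sets are disjoint, so this happens for at most $r$ indices.
\end{enumerate}
You therefore get infinitely many disjoint subintervals of $[0,1]$ with lengths bounded below, which is impossible. So every point of $X$ is accumulated from the right by $X$, and $\overline{X}$ has no isolated points. Using that $A_1$ is a finite union of closed intervals, the corrected $A_2$ is then perfect.

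\textbf{Step (iii) is a non sequitur.} Points of $\overline{X}$ to the left of $x$ that are separated from $J$ by gaps do not contradict the maximality of $J$. Lemma~\ref{lem:J-dynamics}(b) concerns only interior points of interval components. The claim that the Cantor part cannot accumulate from the left at a left endpoint of an interval of $A_1$ needs a genuine argument, and your proposal gives none. The right-gap argument above does not supply it either, since it controls only right neighbourhoods.
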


Recall that all of the intervals we consider are half-open and of the form $[a,b)$, unless stated otherwise.

\begin{lemma}
\label{lem:nonwandering} 
The set $\overline{X} \setminus \{ 1 \}$ is equal to the non-wandering set $\Omega(T)$.
\end{lemma}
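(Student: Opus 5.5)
We prove the two inclusions separately, using the structure result Lemma~\ref{lem:x-structure} for one of them and the Orbit Classification Lemma~\ref{lem:orb-class} for the other.

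\textbf{Step 1: $\Omega(T) \subset \overline{X}$.} Take $x \in I \setminus \overline{X}$. Since $\overline{X}=\bigcap_n \overline{X_n}$ is a decreasing intersection of compact sets, there is $n$ with $x \notin \overline{X_n}$. Choose a neighbourhood $U$ of $x$ disjoint from $\overline{X_n}$. For every $m \ge n$ we have $T^m(U) \subset X_m \subset X_n$, so $T^m(U) \cap U = \emptyset$. The finitely many times $1 \le m < n$ still need to be handled. If $T^m(x) \neq x$ for all $m<n$, shrinking $U$ makes $T^m(U)\cap U=\emptyset$, because $T^m$ is a finite-piece translation. Here the one-sided limits at discontinuities must be checked: the points $T^m(x^\pm)$ should differ from $x$, or else $U$ must be taken one-sided. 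If instead $T^m(x)=x$ for some $m$, then $x$ is periodic and lies in $X$, a contradiction. The one-sided case needs the observation that $x$ is then a limit of points periodic from one side, hence of points of $X$, again contradicting $x\notin\overline X$. So $x$ is wandering.

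\textbf{Step 2: $\overline{X}\setminus\{1\} \subset \Omega(T)$.} Since $\Omega(T)$ is closed in $I$, it suffices to show that a dense subset of $\overline{X}$ is non-wandering. By Lemma~\ref{lem:x-structure}, $\overline{X}=A_1\cup A_2$.

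For the interval part $A_1$: the interior of each interval of $A_1$ lies in an interval component $J$ of $X$. The first return map $R_J$ is a bijective piecewise translation of $J$ with finitely many pieces (Lemma~\ref{lem:rj-facts}), so it is an IET. An IET preserves Lebesgue measure, so Poincar\'e recurrence gives that almost every point of $J$ is recurrent. Hence every nonempty open $U\subset J$ satisfies $R_J^k(U)\cap U\neq\emptyset$ for some $k \ge 1$. This shows $\mathrm{int}\, A_1\subset\Omega(T)$, and then all of $A_1\setminus\{1\}$ follows by closedness.

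For the Cantor part $A_2$: the plan is to show that every $x\in X\cap A_2$ is non-wandering. Take a neighbourhood $U$ of $x$. Since $x\in X_n$ for all $n$, there are points $z_n$ with $T^n(z_n)\in U$. I would instead use the Orbit Classification Lemma on $x$.
\begin{itemize}
\item If $x$ is preperiodic but not periodic, then $x$ lies in $X$ only if it has preimages of arbitrary depth. Because there are finitely many discontinuities, infinitely many of these preimages must lie in a single continuity branch near $x$. This is the delicate part.
\item A cleaner route is to use that $T|_X$ is surjective onto $X$ and that $X \cap A_2$ has no isolated points. Then apply a pigeonhole argument: take $U\cap A_2$ and note that $T$ restricted to $\overline{X}$ is locally an isometry away from the finite critical set, hence preserves a suitable measure.
\end{itemize}
Concretely, I would take an invariant probability measure on $\overline{X}$ supported near $x$. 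Such a measure exists because the accumulation points of the discontinuity orbits generate $A_2$, and Lemma~\ref{lem:orb-class}(3) says that orbits accumulate on the discontinuities of $A_2$. Poincar\'e recurrence for this measure then yields recurrence in $U$.

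\textbf{Main obstacle.} The Cantor part $A_2$ is the hard step. Showing that every open set meeting $A_2$ returns to itself requires producing recurrence without a Lebesgue-invariant measure. My first attempt would be to show that the set of points of $A_2$ that are accumulation points of forward orbits (that is, $\omega$-limit points of critical orbits, via case (3) of Lemma~\ref{lem:orb-class}) is dense in $A_2$. An $\omega$-limit point $p$ of an orbit is automatically non-wandering: the orbit enters any neighbourhood of $p$ twice, so some iterate of that neighbourhood meets it. The remaining task is then to prove that $\omega$-limit points are dense in $A_2$, using that each $X_n$ is a finite union of intervals whose boundary points lie on critical orbits.
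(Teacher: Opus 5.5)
Your Step 1 and your treatment of the interval part $A_1$ agree with the paper's proof. The paper proves the first inclusion the same way, including the tacit use of the fact that $x\notin\overline{X}$ forces $X_n$ to miss a neighbourhood of $x$ for large $n$. It handles intervals of $\overline{X}$ through the bijective return map $R_J$; your Poincar\'e recurrence argument for the IET $R_J$ is a fine way to make its ``clearly'' precise.

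The gap is the Cantor part $A_2$, which is the real content of the second inclusion and which your proposal does not prove. The invariant-measure route fails as stated. Poincar\'e recurrence gives returns to every neighbourhood of $x$ only if $x\in\mathrm{supp}\,\mu$ for some invariant $\mu$, and nothing in the proposal produces such a measure:
\begin{itemize}
\item an invariant measure cannot be ``supported near $x$'';
\item being a local isometry does not give one;
\item $T$ is discontinuous, so Krylov--Bogolyubov does not apply directly;
\item the claim that accumulation points of critical orbits ``generate'' $A_2$ is essentially the statement to be proved.
\end{itemize}
Your final plan, via $\omega$-limit points of critical orbits, is the right one and is the paper's route. But you leave it as ``the remaining task'', and that task is the whole Cantor case.

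The missing argument is short, and it covers every point of $A_2$ rather than a dense subset. Let $x\in\overline{X}\setminus\{1\}$ have a neighbourhood in which $\overline{X}$ contains no nondegenerate interval; this covers $A_2\setminus A_1$. Suppose $x\notin\omega(\beta)$ for every $\beta\in\mathcal{C}\cup\{0^+,1^-\}$.
\begin{enumerate}
\item Every boundary point of $X_n$ has the form $T^k(\beta)$ with such a $\beta$ and $1\le k\le n$. Only finitely many of these iterates come close to $x$, and those different from $x$ are excluded by shrinking. So there is $\delta>0$ such that no boundary point of any $X_n$ lies in $(x-\delta,x)\cup(x,x+\delta)$.
\item Since $x\in\overline{X}\subseteq\overline{X_n}$ and $X_n$ is a finite union of half-open intervals, $X_n$ contains $(x-\delta,x)$ or $(x,x+\delta)$.
\item The same side works for infinitely many $n$, hence for all $n$ by nestedness. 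So $X$ contains a nondegenerate interval with endpoint $x$, contradicting the choice of $x$ once $\delta$ is small.
\end{enumerate}
Hence $x\in\omega(\beta)$ for some signed critical point $\beta$. Such points are non-wandering: points just to the appropriate side of $\beta$ follow the orbit of $\beta$ long enough to visit a given neighbourhood of $x$ twice. No invariant measure and no density-plus-closedness argument is needed.
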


\begin{proof}
We first prove that $\overline{X} \setminus \{1\} \supseteq \Omega(T)$. Let $x$ be a point in $I$ such that $x \notin \overline{X}$. This means that $\overline{\mathcal{U}} \cap \overline{X} = \emptyset$ for any sufficiently small neighbourhood $\mathcal{U}$ of $x$. Take one such sufficiently small open neighbourhood $\mathcal{U}_0$ of $x$. Then there exists $n_0 \ge 1$ such that $X_n \cap \mathcal{U}_0 = \emptyset$ for all $n \ge n_0$, and thus $T^n(\mathcal{U}_0) \cap \mathcal{U}_0 = \emptyset$ for all $n \ge n_0$, since $T^n(\mathcal{U}_0) \subset X_n$.

Next, neither $x^+$ nor $x^-$ is periodic, since $X = \bigcap_{n=1}^{\infty} T^n(I)$. Because of this, for every $n \ge 1$, there exists a sufficiently small neighbourhood $\mathcal{U}_n$ of $x$ such that $T^i(\mathcal{U}_n) \cap \mathcal{U}_n = \emptyset$ for all $1 \le i \le n$. Let $\mathcal{U}_{n_0} \subset \mathcal{U}_0$ be such a sufficiently small neighbourhood. By our choice of $\mathcal{U}_0$, this means that $T^n(\mathcal{U}_{n_0}) \cap \mathcal{U}_{n_0} = \emptyset$ for all $n \ge 1$. Thus the point $x$ is not in the non-wandering set.

We now prove that $\overline{X} \setminus \{1\} \subseteq \Omega(T)$. Let $x$ be a point in $\overline{X} \setminus \{1\}$. By Lemma \ref{lem:x-structure}, $\overline{X} = A_1 \cup A_2$, where $A_1$ is a finite union of closed intervals and $A_2$ is a Cantor set (we may assume both are non-empty). Thus for any sufficiently small \textit{closed} neighbourhood $\mathcal{U}$ of $x$ exactly one of the following holds:
\begin{enumerate}
    \item $\mathcal{U} \cap \overline{X}$ contains exactly one closed interval $J$ such that $x \in J$ and $|J| > 0$;
    \item $\mathcal{U} \cap \overline{X}$ contains no interval.
\end{enumerate} 
In the first case, the first return map $R_J$ to $J$ is well-defined and bijective by Lemm\ \ref{lem:rj-facts}, so $x$ is clearly contained in $\Omega(T)$.

In the second case, we show that $x \in \omega(\beta)$ for some discontinuity $\beta \in \mathcal{C}$. This is sufficient, since all $\omega$-limit sets are contained in the non-wandering set, and thus $x \in \Omega(T)$ as well. Let $\mathcal{U}$ be a neighbourhood of $x$ as above. For any $n \ge 0$, the boundary points of $X_n$ consist of iterates of discontinuities. If $x \notin \omega(\beta)$ for every $\beta \in \mathcal{C}$, then the distance between the boundary of $\overline{X_n}$ and $x$ is larger than some positive constant $\delta$ for all sufficiently large $n$. Since $x$ is contained in every $\overline{X_n}$, an interval of length at least $\delta$ containing $x$ must also be contained in every $X_n$. Thus $\overline{X} \cap \mathcal{U}$ contains an interval of positive length, which is a contradiction with our choice of $\mathcal{U}$. Thus $x \in \omega(\beta)$ for some discontinuity $\beta \in \mathcal{C}$, which finishes the proof.
\end{proof}

\section{Stability, ACC and Matching}
\label{sec:stability-overview} 

\subsection{Definitions and examples} 
\label{subsec:stab=acc+m-dfn-exam}
We now recall the notation for the dynamics of a return map $R_J$ to $J$, from the beginning of Subsection \ref{subsec:basic-res-def}. The notation is slightly changed so that it better fits the notation of this section. Let $J = [x,y)$ be a dynamically non-trivial interval of $X$ and let $a_1, a_2, \dots a_{N-1}$ be the points in the interior of $J$ which land on discontinuities before returning to $J$ and let $l_1, l_2, \dots, l_N$ be the corresponding landing times. More precisely, let $l_i \ge 0$ be the smallest integer such that $T^{l_i}(a_i) = \beta_{J,i} \in \mathcal{C}$. Let $J_1 = [x,a_1^-), J_2 = [a_1^+, a_2^-), \dots, J_N = [a_{N-1}^+,y)$, and let $r_1, r_2, \dots, r_{N}$ be the return times to $J$ of these intervals, i.e.\ $R_J(J_i) = T^{r_i}(J_i)$. Note that $l_i < r_i$ and $l_i < r_{i+1}$. It will also be useful to define $a_0^+ \coloneqq x$ and $a_{N}^- = y$.

It is convenient to associate with each $J$ a permutation $\sigma_J$ which corresponds to the order on $J$ in which the subintervals return to $J$. for example, if $N=3$ and the order in which the intervals return is: $R_J(J_3) \; R_J(J_2) \; R_J(J_1)$, then $\sigma_J = (3 2 1)$. 

Let $\tau_J = (i_1, i_2, \dots, i_{N})$ be the inverse of $\sigma_J$, so that the order of the images under the return map is $R_J(J_{i_1}) \; R_J(J_{i_2}) \dots R_J(J_{i_{N}})$. Then the following equations are satisfied at the critical values of the return map $R_J$ (recall that $a \sim b$ denotes a pair of touching points from Definition \ref{def:touching}):

\[
\begin{array}{cc}
\label{eq:RJeq}
     &  R_J(a_{i_1}^-) \sim R_J(a_{i_2-1}^+) \\
     &  R_J(a_{i_2}^-) \sim R_J(a_{i_3-1}^+) \\
     & \dots \\
     & R_J(a_{i_{N-1}}^-) \sim R_J(a_{i_{N}-1}^+),
\end{array}
\]

These equations are central to the stability of the interval $J$, i.e.\ to the property that $J$ always has a continuation $\Tilde{J}$ for any sufficiently small perturbation. As before, by $\Tilde{Z}$ we will denote the continuation of any object of interest $Z$, e.g.\ $\Tilde{\beta}^+$, $\Tilde{X}$, $\Tilde{T}$ and similar.

Recall the definition of a stable $\ITM$ from the introduction:

\begin{definition}[Stable maps]
\label{def:stable} 
We say that a map $T \in \ITM(r)$ is \textit{stable} if there is a neighbourhood $\mathcal{U}$ of $T$ in $\ITM(r)$ such that:

\begin{enumerate}
\item The mapping assigning to each map $\Tilde{T} \in \mathcal{U}$ the corresponding set $\overline{X}(\Tilde{T})$ is continuous with respect to the Hausdorff topology on compact subsets of $[0,1]$. Moreover, for each $\tilde{T} \in \mathcal{U}$, $\overline{X}(\Tilde{T})$ is homeomorphic to $\overline{X}(T)$.
\item The number of discontinuities in $I \setminus \overline{X}(\Tilde{T})$ is constant in $\mathcal{U}$.
\end{enumerate}
\end{definition} 

The following is a simple fact:

\begin{lemma}
\label{lem:stable-fin-type}
Stable maps are of finite type.
\end{lemma}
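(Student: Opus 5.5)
We argue by contraposition: assuming $T$ is of infinite type, we exhibit arbitrarily small perturbations $\Tilde{T}$ for which $\overline{X}(\Tilde{T})$ fails to be homeomorphic to $\overline{X}(T)$. This violates condition (1) of Definition~\ref{def:stable}. The invariant we use is the topological structure from Lemma~\ref{lem:x-structure}: $\overline{X} = A_1 \cup A_2$, with $A_1$ a finite union of closed intervals and $A_2$ a Cantor set. For infinite type, $X_{n+1} \subsetneq X_n$ for all $n$. We first check that this forces $A_2 \neq \emptyset$, i.e.\ $\overline{X}$ has a non-isolated point-component (a Cantor piece). Suppose instead that $\overline{X}$ were a finite union of intervals. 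Each interval component $J$ carries a bijective first return map (Lemma~\ref{lem:rj-facts}). A standard argument via the Orbit Classification Lemma~\ref{lem:orb-class} then shows that the finitely many discontinuities outside $\overline{X}$ must map into $X$ in bounded time, since otherwise they would accumulate on a Cantor part. Hence $X_n = X$ for large $n$, contradicting infinite type. So it suffices to show that a map whose $\overline{X}$ contains a Cantor set can be perturbed to one whose $\overline{X}$ has a different topological type.

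The perturbation goes as follows. By the Orbit Classification Lemma, the Cantor set $A_2$ is contained in $\bigcup_{\beta}\omega(\beta)$ (as in the proof of Lemma~\ref{lem:nonwandering}), so some discontinuity $\beta \in \mathcal{C}$ has an infinite, non-periodic orbit accumulating on $A_2$. We make an arbitrarily small change of the parameter vector $(\gamma\,\beta)$ so that, for some large $n$, $\Tilde{T}^n(\Tilde{\beta})$ becomes periodic or lands on a discontinuity. Concretely, we pick $n$ with $T^n(\beta)$ very close to $\beta$ (or to another discontinuity) and shift one translation parameter $\gamma_s$ by a small amount to close up the orbit. We then argue that the resulting map has $\overline{X}(\Tilde{T})$ with fewer Cantor pieces, and eventually, after finitely many such steps, a set which is a finite union of intervals near $A_2$, or at least a set whose number of Cantor components differs. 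Alternatively, an easier invariant is available: near a point of $A_2$ we can create, by such a closing perturbation, a genuine periodic interval (a maximal periodic interval of positive length), giving $\overline{X}(\Tilde{T})$ an interval component where previously there was none nearby. Since $\overline{X}(T)$ has only finitely many interval components, doing this arbitrarily close to $T$ (with the new interval's continuation persisting) changes the count of nondegenerate components, so the homeomorphism type changes.

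The main obstacle is controlling the global effect of the perturbation. Making the orbit of $\beta$ periodic creates a periodic interval adjacent to $\Tilde{\beta}$ (the points near $\Tilde{\beta}$ on one side follow the same itinerary for the period, so they form an interval of periodic points). This interval lies in $X(\Tilde{T})$. The difficulty is ensuring that the perturbation does not simultaneously destroy the existing interval components or merge things so that the homeomorphism type coincidentally survives. I would handle this using the Perturbation Lemma~\ref{lem:pert-lem}, which allows perturbing while keeping each interval component $J$ of $X$ with a continuation $\Tilde{J}$ having the same return itineraries. Only a Cantor set of arbitrarily large complexity could absorb a new interval homeomorphically, and it cannot, since a Cantor set has no interval components. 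Thus the number of interval components of $\overline{X}$ strictly changes. Alternatively, one can simply note that if $\Tilde{T}$ is again of infinite type we can iterate, and since maximal periodic intervals with disjoint orbits are bounded in number by $r-1$, the count of interval components cannot remain constant along the process.
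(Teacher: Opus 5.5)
There is a genuine gap. Your first step is the right frame: for infinite type, $\overline{X}(T)$ contains a Cantor set (the paper relies on this fact too), so it suffices to find \emph{one} map in the stability neighbourhood $\mathcal{U}$ whose $\overline{X}$ has a different topological type. You never actually produce such a map.

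The closing-up step is not justified. Shifting $\gamma_s$ to move $T^n(\beta)$ onto $\beta$ also moves the intermediate iterates $T^i(\beta)$, $i<n$, by up to $|T^n(\beta)-\beta|$. The orbit of $\beta$ accumulates on discontinuities (this is how $A_2$ arises, cf.\ Lemma~\ref{lem:orb-class}), so some of these iterates may lie closer than that to $\mathcal{C}$. The itinerary of $\beta$ before time $n$ can then change, and the orbit need not close up. Moreover, $\beta$ need not recur near itself at all. Landing on another discontinuity only creates a critical connection, not a periodic interval.

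Even granting a new periodic interval, nothing shows that the number of nondegenerate components changes. The same perturbation may destroy or merge existing interval components. Hausdorff closeness does not prevent this. The Perturbation Lemma~\ref{lem:pert-lem} controls only one component $J$ and certain critical connections, not all of $\Tilde{X}$. Your other alternatives also fail:
\begin{itemize}
\item The ``number of Cantor components'' is not a meaningful invariant, since the components of a Cantor set are points.
\item The iteration alternative has no termination argument and no control keeping the maps inside $\mathcal{U}$.
\item The bound on the number of periodic intervals does not force the component count to change.
\end{itemize}

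The missing idea makes all of this unnecessary. Maps with rational parameters are dense in $\ITM(r)$, and every such map is of finite type. If all translation parameters $\gamma'_s$ of $T'$ have denominators dividing $Q$, then the $T'$-orbit of any $x$ lies in $(x+\frac{1}{Q}\mathbb{Z})\cap[0,1)$, which has at most $Q$ points. By pigeonhole, $(T')^{Q}(x)$ is periodic for every $x$, so $(T')^{Q+1}(I)=(T')^{Q}(I)$.

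Hence $\mathcal{U}$ contains a map whose $\overline{X}$ is a finite union of closed intervals, with finitely many components. In contrast, $\overline{X}(T)$, containing a Cantor set, has infinitely many components. The two sets are not homeomorphic, which contradicts stability. This is the paper's proof, and it requires no control over how any specific perturbation reshapes $X$.
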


\begin{proof}
Let $T$ be a stable map and let $\mathcal{U}$ be a neighbourhood of stability for $T$ from Definition \ref{def:stable}. All maps in $\mathcal{U}$ are either of finite or infinite type, since a Cantor set is not homeomorphic to a finite union of intervals. Thus it suffices to show that there is a finite type map in $\mathcal{U}$. Since $\ITM(r)$ is a polytope in $\mathbb{R}^{2r-1}$, the points with rational coordinates are clearly dense in it. Thus arbitrarily close to $T$, and therefore in $\mathcal{U}$, there is a map $T'$ with rational coefficients. We claim that $T'$ is of finite type. Let $\mathbb{N}(\gamma')$ be the set of all linear combinations with non-negative integer coefficients of the translation factors $\gamma'_1, \dots, \gamma'_r$ of $T'$. The minimal distance between two elements of this set is at least $\frac{1}{q^r}$, where $q$ is the largest denominator of the numbers $\gamma'_1, \dots, \gamma'_r$. Thus for any $x \in I$, the set $x + \mathbb{N}(\gamma') \cap [0,1)$ has cardinality at most $q^r$, which means that $x$ lands on a periodic point after at most $q^r$ iterates. Since there are only finitely many intervals of periodic points, $T^{q^r}(I) = T^{q^r+1}(I)$, so $T'$ is of finite type, which finishes the proof.
\end{proof}

This definition of stability is softer than the usual ones in one-dimensional dynamics, like $\Omega$-stability (see \cite{MR1239171}) and $J$-stability (see \cite{MR1312365}). The difference is that we do not require any dynamical conjugacy between the maps in the neighbourhood $\mathcal{U}$, just that two properties remain the same after perturbation: the number of discontinuities in and the shape of the non-wandering set. The reason is that it is impossible for all maps in any open set $\mathcal{U}$ of $\ITM(r)$ to be conjugate:

\begin{lemma}
\label{lem:itm-not-struct-stab}
No map in $\ITM(r)$ is structurally stable or $\Omega$-stable. 
\end{lemma}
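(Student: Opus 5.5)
We show that every open set $\mathcal{U}\subset\ITM(r)$ contains two maps that are not topologically conjugate on their non-wandering sets. This rules out structural stability and $\Omega$-stability at once, because both notions require every map sufficiently close to $T$ to be conjugate to $T$: on all of $I$ in the first case, on $\Omega(\cdot)=\overline{X}(\cdot)\setminus\{1\}$ in the second. Since a conjugacy on all of $I$ restricts to one on the non-wandering sets, it is enough to defeat $\Omega$-stability. The invariant we use is the rotation behaviour, namely whether periodic points occur in $\Omega$ and which periods they have.

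Fix $T$ and a neighbourhood $\mathcal{U}$. The first step is to reduce to a convenient nearby map. By density (as in the proof of Lemma~\ref{lem:stable-fin-type}), we pick $T'\in\mathcal{U}$ with rational coefficients. It is of finite type, and every point lands on a periodic point, so $\Omega(T')$ contains a nonempty union of periodic intervals. In particular $X(T')$ has an interval component $J$, and we may assume $T(I)$ is compactly contained in the interior of $I$.

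The second step perturbs $T'$ so that $\Omega$ acquires a non-periodic point. We apply the Perturbation Lemma~\ref{lem:pert-lem} to $J$, choosing the $\epsilon^{\gamma}_j$ so that the new translation factors $\Tilde{T}r(\Tilde{J}_j,r_j)$ are no longer all $0$ and are chosen irrational relative to $|\Tilde J|$. The return map $R_{\Tilde J}$ is then an interval exchange on $\Tilde J$ with the same combinatorics. In the simplest case $N=2$ it is an irrational rotation, and $\Tilde J$ (or at least a nonempty invariant subset of it) consists of non-periodic points lying in $\Omega(\Tilde{T})$. To keep this concrete, we arrange first that $R_J$ has at least two continuity intervals: split $J$ by perturbing a single $\epsilon^{\gamma}$ with rational size, so that $R_{\Tilde J}$ becomes a rational rotation by a prescribed amount $p/q$ and its periodic orbits have period $q\cdot r_j$. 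Varying $p/q$ then gives maps $T_1,T_2\in\mathcal{U}$ for which the sets of periods of periodic points in $\Omega$ differ, or a map $T_2$ in which $\Omega$ contains non-periodic points while $\Omega(T_1)$ consists only of periodic points.

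The third step concludes. A topological conjugacy $h\colon\Omega(T_1)\to\Omega(T_2)$ maps periodic points of period $n$ to periodic points of period $n$. The sets of periods of periodic points in $\Omega(T_1)$ and $\Omega(T_2)$ are therefore equal, and the same goes for the presence of non-periodic points. This contradicts the construction of the second step.

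The main obstacle is the second step: we must ensure that the perturbed return map on $\Tilde J$ really produces rotations with differing periods. The Perturbation Lemma controls only the translation factors, not directly the combinatorics of $R_{\Tilde J}$. If $N=1$ for $J$, the return map is the identity, and a perturbation of $\epsilon^{\gamma}_1$ alone would make $J$ wander. We handle this by choosing $J$ to be a maximal periodic interval, whose endpoints land on discontinuities. A suitable choice of $\epsilon^{\beta}$ and $\epsilon^{\gamma}$ in the Perturbation Lemma then produces a two-piece return map, and hence a circle rotation, whose rotation number is a continuous non-constant function of the perturbation.
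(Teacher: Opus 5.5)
Your overall strategy is sound: in every neighbourhood $\mathcal{U}$, exhibit two maps whose dynamics on the non-wandering sets cannot be conjugate. Step~1 is also fine. Step~2, however, does not go through.

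The Perturbation Lemma~\ref{lem:pert-lem} only says that the $N$ pieces $\tilde J_j$ keep their itineraries up to time $r_j$, with translation factors shifted by $\epsilon^{\gamma}_j$. It does not assert that the images $\tilde T^{r_j}(\tilde J_j)$ tile $\tilde J$. Even if you choose the $\epsilon$'s so that they do, you only reach interval exchanges with the same $N$ and the same permutation as $R_J$. (For $N=2$ and $\sigma_J=(2\,1)$ this does yield an irrational rotation, but that is a special case.) If $N=1$, or more generally $R_J=\mathrm{id}$, every such return map is again the identity. Nothing in the lemma lets you create a two-piece return map out of a maximal periodic interval.

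This situation really occurs for the rational map you pick. For $r=3$, $\beta=(0.3,0.5)$, $\gamma=(0.55,0.3,-0.3)$, one has $T(I)=[0.2,0.85)$ and $X=[0.3,0.5)\cup[0.6,0.8)$. This is a $2$-cycle of dynamically trivial intervals on which $T^2=\mathrm{id}$.

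Two further problems remain. First, the lemma gives no control of $\Omega(\tilde T)$ away from the orbit of $\tilde J$, so the claim that the sets of periods differ is unsupported. Second, your other alternative is false as stated: $\Omega$ of a rational map need not consist only of periodic points, because $\overline{X}\setminus X$ can contain strictly preperiodic points. In the example, $0.5\in\Omega(T)$ (every neighbourhood contains periodic points $0.5-\delta$), yet $0.5\mapsto 0.2\mapsto 0.75\mapsto 0.45\mapsto 0.75$.

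The missing observation makes the Perturbation Lemma unnecessary. By Lemma~\ref{rat-ind} there is $T_2\in\mathcal{U}$ with rationally independent parameters, and such a map has \emph{no} periodic points at all. Indeed, $T_2^n(x)=x$ with $n\ge 1$ would give $\sum_{s=1}^r k_s(x,n)\gamma_s=0$ with non-negative integers $k_s$ summing to $n$.

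On the other hand, a map $T_1\in\mathcal{U}$ with rational parameters does have periodic points, since every orbit lands on one by the proof of Lemma~\ref{lem:stable-fin-type}. Periodic points are non-wandering. A conjugacy $h\colon\Omega(T_1)\to\Omega(T_2)$ would send a periodic point to a periodic point, which is impossible.

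This is exactly the paper's two-line proof. Its invariant, ``$\Omega$ contains a periodic point'', is insensitive to the boundary points of $\overline{X}$ that break your dichotomy.
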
 
\begin{proof} Near each map $T$, there is a map with rational parameters, for which every point in $I$ lands on a periodic point by the proof of Lemma \ref{lem:stable-fin-type}, and one with rationally independent parameters, which has no periodic points. 
\end{proof} 

The next lemma follows immediately from the fact that the set $\Tilde{X}$ of a sufficiently small perturbation $\Tilde{T}$ of $T$ is close to and homeomorphic to $X$.

\begin{lemma}
\label{lem:conseq-stab}
For a sufficiently small perturbation $\Tilde{T}$ of a stable map $T$, $\overline{X}(\Tilde{T})$ consists of the same number of intervals as $X$ and these intervals are pairwise close to each other in the Hausdorff sense. This means that each component $J$ of $X$ has a well-defined continuation $\Tilde{J} \subset \Tilde{X}$, and in particular that its boundary points $x$ and $y$ also have continuations $\Tilde{x}$ and $\Tilde{y}$. Moreover, the continuation $\Tilde{J}$ of an interval $J$ contains the continuations of the discontinuities contained in $J$. \qed
\end{lemma}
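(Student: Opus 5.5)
The statement is Lemma~\ref{lem:conseq-stab}. I will deduce it directly from Definition~\ref{def:stable} together with Lemma~\ref{lem:stable-fin-type} and Lemma~\ref{lem:x-structure}.

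Since $T$ is stable, it is of finite type by Lemma~\ref{lem:stable-fin-type}. Hence $X = X_n$ for some $n$, and $X$ is a finite union of half-open intervals. Consequently $\overline{X}(T)$ is a finite disjoint union of, say, $m$ closed intervals of positive length. Degenerate points cannot occur, since each $X_n$ is a union of images of half-open intervals. By Definition~\ref{def:stable}(1), for $\Tilde{T}\in\mathcal{U}$ the set $\overline{X}(\Tilde{T})$ is homeomorphic to $\overline{X}(T)$. Under Lemma~\ref{lem:x-structure}, the only compact sets of this form that are homeomorphic to a union of $m$ disjoint nondegenerate closed intervals are unions of exactly $m$ disjoint nondegenerate closed intervals: a Cantor part is excluded because of its isolated-component structure, and so are degenerate components. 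The number of components is a topological invariant, so $\overline{X}(\Tilde{T})$ has exactly $m$ components.

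Next I use Hausdorff continuity to match the components. Write $\overline{X}(T) = K_1\cup\dots\cup K_m$ in increasing order, and let $\delta$ be less than half the minimal gap between the $K_i$ and less than half the minimal length of the $K_i$. If $d_H(\overline{X}(\Tilde{T}),\overline{X}(T))<\delta$, then every component $\Tilde{K}$ of $\overline{X}(\Tilde{T})$ lies in the $\delta$-neighbourhood of exactly one $K_i$, because a connected set cannot cross a gap of width $2\delta$. Conversely, each $K_i$ must be approached by some component. With $m$ components on each side, a pigeonhole argument gives a bijection $K_i\leftrightarrow \Tilde{K}_i$. Each $\Tilde{K}_i$ lies in the $\delta$-neighbourhood of $K_i$, and $K_i$ lies in the $\delta$-neighbourhood of $\Tilde{K}_i$; a point of $K_i$ cannot be approximated by points of other components. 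Hence $d_H(K_i,\Tilde{K}_i)<\delta$, and so the endpoints satisfy $|\Tilde{x}-x|,|\Tilde{y}-y|<\delta$. This defines the continuations $\Tilde{J}$, $\Tilde{x}$ and $\Tilde{y}$, with $\Tilde{J}$ being the corresponding component of $\Tilde{X}$.

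Finally, consider the discontinuities. The discontinuity points $\beta_i$ vary continuously with the parameters. A discontinuity $\beta$ in the interior of $J$ has $\Tilde{\beta}$ near $\beta$, which lies in the interior of $\Tilde{J}$ because of the endpoint closeness. This works once $\delta$ is also taken smaller than the distance from the interior discontinuities to $\partial J$, and smaller than the distance from discontinuities outside $\overline{X}$ to $\overline{X}$.

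The main obstacle is a discontinuity sitting exactly at an endpoint of $J$, for instance $\beta = x$. There I would invoke Definition~\ref{def:stable}(2). The number of discontinuities outside $\overline{X}(\Tilde{T})$ is constant. The discontinuities far from $\overline{X}$ stay outside and the interior ones stay inside, so the boundary ones cannot leave. Such a discontinuity must therefore remain in $\overline{X}(\Tilde{T})$, and by proximity it lies in $\Tilde{J}$. Some care is needed with $1\notin I$ and with the right-endpoint convention for half-open intervals.
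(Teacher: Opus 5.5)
Your argument is correct and takes the same route as the paper. The paper just asserts that the lemma is immediate from the Hausdorff closeness and the homeomorphism in Definition~\ref{def:stable}; your $\delta$-matching of components, and your use of condition (2) to keep endpoint discontinuities inside $\overline{X}(\tilde T)$, spell this out. Two small touch-ups: take $\delta$ below half the distance from the outside discontinuities to $\overline{X}$ (both $\tilde\beta$ and $\overline{X}(\tilde T)$ move by up to $\delta$), and note that $\tilde T$ is itself of finite type (all maps in $\mathcal{U}$ share the type, as in the proof of Lemma~\ref{lem:stable-fin-type}), so that the components $\tilde K_i$ of $\overline{X}(\tilde T)$ really are the closures of the components of $\tilde X$.
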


Let us now give a non-trivial example of a stable map, as shown in Figure \ref{fig:stab-map} below.

\begin{example}[Stable map]
\label{ex:stable-map} 
Assume that $r=3$ and that the interval $J = [T^2(\beta_2^+),T(\beta_2^-)]$ 
contains $\beta_2$ in its interior. Let $J^-=[T^2(\beta_2^+),\beta_2^-]$ and $J^+=[\beta_2^+,T(\beta_2^-)]$.
Assume the following about the orbits of $\beta_2^+$ and $\beta_2^-$:

\begin{itemize}
    \item $T(\beta_2^+)\in I_1$ and $T^2(\beta_2^+) \in I_2$;
    \item $T(\beta_2^-)\in I_3$ and $T^2(\beta_2^-)\in I_1$;
\end{itemize} 
Then:

\begin{itemize}
    \item $T^3(\beta_2^+) = \beta_2^+  + \gamma_3 + \gamma_1 + \gamma_2$;
    \item $T^3(\beta_2^-) = \beta_2^-  + \gamma_2 + \gamma_3 + \gamma_1$,
\end{itemize}
so $T^3(\beta_2^+)\sim T^3(\beta_2^-)$. If we additionally assume that $T^3(\beta_2^+), T^3(\beta_2^-) \in J$, then for a suitable choice of parameters (as in Figure \ref{fig:stab-map}), $T$ is stable and $X(T)$ is equal to the orbit of $J$. Indeed, the itineraries of $\beta_2^+$ and $\beta_2^-$ remain the same for all nearby maps $\Tilde{T}$, which means that the interval $\Tilde{J} = [\Tilde{T}^2(\Tilde{\beta}_2^+),\Tilde{T}(\Tilde{\beta}_2^-))$ is a continuation of $J$. 
\end{example} 

\begin{figure}[h]
\begin{center}
\definecolor{qqzzff}{rgb}{0.,0.6,1.}
\definecolor{qqttzz}{rgb}{0.,0.2,0.6}
\definecolor{ffttww}{rgb}{1.,0.2,0.4}
\definecolor{uququq}{rgb}{0.25,0.25,0.25}
\definecolor{qqzzqq}{rgb}{0.,0.6,0.}
\begin{tikzpicture}[line join=round,x=8.0cm,y=8.0cm]
\clip(-0.1,-0.1) rectangle (1.1,1.1);
\draw [line width=1.0pt,color=uququq] (0.,0.)-- (1.,0.);
\draw [line width=1.0pt,color=uququq] (0.,1.)-- (0.,0.);
\draw [line width=1.0pt,dash pattern=on 2pt off 2pt,color=uququq] (0.,0.)-- (1.,1.);
\draw [line width=1.0pt,color=uququq] (1.,0.)-- (1.,1.);
\draw [line width=1.0pt,color=uququq] (1.,1.)-- (0.,1.);
\draw [line width=1.0pt,dash pattern=on 2pt off 2pt,color=ffttww] (0.4996686055531796,0.)-- (0.49966860555317955,0.6425257484103225);
\draw [line width=1.0pt,dash pattern=on 2pt off 2pt,color=ffttww] (0.49966860555317955,0.6425257484103225)-- (0.6425257484103225,0.6425257484103225);
\draw [line width=1.0pt,dash pattern=on 2pt off 2pt,color=ffttww] (0.66,0.)-- (0.66,0.8095238095238095);
\draw [line width=1.0pt,dash pattern=on 2pt off 2pt,color=ffttww] (0.66,0.8095238095238095)-- (0.8095238095238095,0.8095238095238095);
\draw [line width=1.0pt,dash pattern=on 2pt off 2pt,color=ffttww] (0.8095238095238095,0.8095238095238095)-- (0.8095238095238095,0.024594454816063052);
\draw [line width=1.0pt,dash pattern=on 2pt off 2pt,color=qqzzff] (0.166,0.)-- (0.166,0.166);
\draw [line width=1.0pt,dash pattern=on 2pt off 2pt,color=qqzzff] (0.166,0.166)-- (0.66,0.166);
\draw [line width=1.0pt,dash pattern=on 2pt off 2pt,color=qqzzff] (0.3102479928758915,0.)-- (0.3102479928758915,0.3102479928758915);
\draw [line width=1.0pt,dash pattern=on 2pt off 2pt,color=qqzzff] (0.3102479928758915,0.3102479928758915)-- (0.8102479928758916,0.3102479928758915);
\draw [line width=1.0pt,dash pattern=on 2pt off 2pt,color=qqzzff] (0.8102479928758916,0.3102479928758915)-- (0.8102479928758916,0.);
\draw [line width=1.0pt,dash pattern=on 2pt off 2pt,color=ffttww] (0.6425257484103225,0.6425257484103225)-- (0.6425257484103224,0.024594454816063052);
\draw [line width=1.0pt,dash pattern=on 2pt off 2pt,color=qqzzff] (0.166,0.)-- (0.166,0.5);
\draw [line width=1.0pt,dash pattern=on 2pt off 2pt,color=qqzzff] (0.3102479928758915,0.)-- (0.3102479928758915,0.6435813262092248);
\draw [line width=1.0pt,dash pattern=on 2pt off 2pt,color=qqzzff] (0.3102479928758915,0.6435813262092248)-- (0.6435813262092248,0.6435813262092248);
\draw [line width=1.0pt,dash pattern=on 2pt off 2pt,color=qqzzff] (0.6435813262092248,0.024594454816063052)-- (0.6435813262092248,0.6435813262092248);
\draw [line width=1.0pt,dash pattern=on 2pt off 2pt,color=qqzzff] (0.166,0.5)-- (0.5,0.5);
\draw [line width=1.0pt,dash pattern=on 2pt off 2pt,color=qqzzff] (0.5,0.5)-- (0.5,0.024594454816063052);
\draw [line cap=square,line width=3pt,color=qqttzz] (0.66,0.)-- (0.81,0.);
\draw [line cap=square,line width=3pt,color=qqttzz] (0.5,0.025)-- (0.64,0.025);
\draw [line cap=square,line width=3pt,color=qqttzz] (0.166,0.)-- (0.31,0.);
\draw [line cap=square,line width=3pt,color=ffttww] (0.64,0.025)-- (0.81,0.025);
\draw [line cap=square,line width=3pt,color=ffttww] (0.5,0.)-- (0.66,0.);
{\scriptsize
\draw (0.59,0.0) node[anchor=north] {$J^-$};
\draw (0.75,0.0) node[anchor=north] {$J^+$};
\draw (0.25,0.0) node[anchor=north] {$T(J^+)$};
\draw (0.57,0.09) node[anchor=north] {$T^2(J^+)$};
\draw (0.74,0.09) node[anchor=north] {$T(J^-)$};
}
\draw [line width=2pt,color=qqzzqq] (0.66,0.166)-- (1.,0.5);
\draw [line width=2pt,color=qqzzqq] (0.,0.33)-- (0.33,0.66);
\draw [line width=2pt,color=qqzzqq] (0.33,0.476)-- (0.66,0.81);
\end{tikzpicture}
\caption{{A particular map as in Example ~\ref{ex:stable-map} with $\beta_0=0$, $\beta_1=1/3$, $\beta_2=2/3$, $\beta_3=1$, $\gamma_1=1/3$, $\gamma_2=1/7$, $\gamma_3=-1/2$. The intervals $J^-$ and $T(J^-)$ are shown in pink and the intervals $J^+$, $T(J^+)$, $T^2(J^+)$ are drawn in blue.}}
\label{fig:stab-map}
\end{center}
\end{figure}

The definition of stability mainly concerns the topology and the location of the non-wandering set, giving only limited information about the dynamics on $X$. Moreover, the dynamical information retained by making a small perturbation of a stable map is relatively soft. We would therefore like to characterise stability in terms of more concrete dynamical properties. Intuitively speaking, a map $T$ is not stable if there exist arbitrarily small perturbations $\Tilde{T}$ that cause at least one of the following three things to happen:

\begin{enumerate}
    \item A definite part of $X$ gets removed, i.e.\ the assignment of $\overline{X}$ is not lower semi-continuous at $T$. More formally, $\Tilde{X}$ has a smaller number of intervals than $X$, or some interval component of $\tilde{X}$ is shorter than the corresponding interval component of $X$ by some positive constant. 
    \item A definite new part gets added to $X$, i.e.\ the assignment of $\overline{X}$ is not upper semi-continuous at $T$. More formally, $\Tilde{X}$ has a larger number of intervals than $X$, or some interval component of $\tilde{X}$ is longer than the corresponding interval component of $X$ by some positive constant. 
    \item Discontinuities that were in $X$ are not in $\Tilde{X}$. 
\end{enumerate}
We are therefore looking for dynamical properties that guarantee that none of these three things can happen for an arbitrarily small perturbation of $T$.
To guarantee that $(1)$ does not happen, we can ask for the dynamics of the full orbit of every interval $J$ of $X$ to remain the same after perturbation (as in the example above). For $(3)$, we can simply require that there are no discontinuities in the boundary of $X$ (assuming the first and the second things do not happen). Item $(2)$ is the most elusive, and we will need to discuss it a bit before giving the definitions of the required dynamical properties.

The following example illustrates the simplest way in which the set $X$ can become larger. The dynamics of a particular map as in Example \ref{ex:ghost-preimage} is shown in Figure \ref{fig:ghost-preimage}.

\begin{example}
\label{ex:ghost-preimage}
Assume that there are two discontinuities $\betas$ and $\betass$ for which there exist $k_1 > 0$ and $k_2 > 0$ such that:
\[
T^{k_1}(\betas^-) = \betass^- \text{ and } T^{k_2}(\betass^+) = \betas^+.
\]
Moreover, assume that none of the discontinuities $\betas^-, \betas^+, \betass^-$ and $\betass^+$ are in $X$ and that there exists a perturbation $\Tilde{T}$ of $T$ such that:
\begin{itemize}
    \item The itinerary of $\betas^-$ remains the same up to time $k_1$ and $T^{k_1}(\betas^-) = \betass^- + \epsilon$;
    \item The itinerary of $\betass^+$ remains the same up to time $k_2$ and $T^{k_2}(\betass^+) = \betas^+ - \epsilon$.
\end{itemize}
There exists an interval of points to the left of $\betas^-$ all of which have the same itinerary as $\betas^-$ up to time $k_1$. The same holds for $\betass^+$ and its itinerary up to time $k_2$. In particular, these intervals map forward continuously up to times $k_1$ and $k_2$, respectively. Thus for a sufficiently small $\epsilon$ the interval $[\betas^- - \epsilon,\betas^-)$ maps forward continuously up to time $k_1 + k_2$. It is therefore periodic with period $k_1 + k_2$ and contained in $\Tilde{X}$. As $\betas^-, \betas^+, \betass^-$ and $\betass^+$ were by assumption not in $X$, their distance from $X$ is bounded from below by a positive constant. Therefore $\overline{X}(\Tilde{T})$ and $\overline{X}(T)$ cannot be close for a sufficiently small perturbation. Thus $T$ is not a stable map.
\end{example}

\begin{figure}
    \centering
    \includegraphics[width=\linewidth]{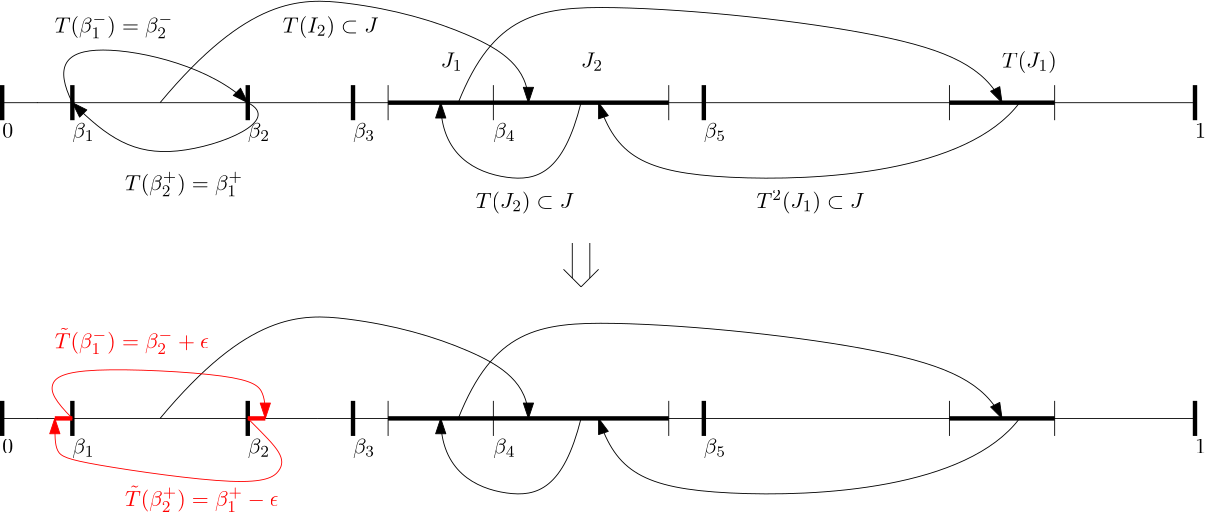}
    \caption{A map $T$ as in Example \ref{ex:ghost-preimage}, with $k_1 = k_2 = 1$, $\beta_* = \beta_1$ and $\beta_{**} = \beta_2$. The perturbation increases $\gamma_1$ by $\epsilon$, decreases $\gamma_3$ by $\epsilon$ and leaves the other parameters unchanged.}
    \label{fig:ghost-preimage}
\end{figure}

This example is the simplest case of a more general phenomenon called \textit{ghost preimages}. We give the definition for a $+$-type discontinuity, with the one for $-$-type case being analogous.

\begin{definition}[Ghost preimage]
Let $\betas^+$ be a discontinuity of $T$. A discontinuity $\betass^-$ that lands on $\betas^-$ is called a \textit{ghost preimage} of $\betas^+$.
\end{definition}

Thus a ghost preimage of a $+$-type discontinuity $\betas^+$ is a $-$-type discontinuity $\betass^-$ that lands on $\betas^-$. The definition can easily be extended to a $-$-type discontinuity. The motivation for the name is the following: $\betass^-$ is \textit{almost} a preimage of $\betas^+$, i.e.\ by an arbitrarily small perturbation we can make the iterate $T^k(\betass^-)$ land to the right of $\betas^+$, thus creating an actual preimage of $\betas^+$.


In Example \ref{ex:ghost-preimage} the point $\betas^+$ is a ghost preimage of $\betass^-$ and that $\betass^-$ is a ghost preimage of $\betas^+$ as well. This is the property that allows for the enlargement of $X$, and it motivates the following definition:

\begin{definition}[Ghost tree]
\label{def:ghost-tree}
Let $\beta$ be a discontinuity of $T$. The \textit{ghost tree} $\mathcal{GT}(\beta)$ of $\beta$ is defined inductively in the following way. Set $\beta$ to be the root of the tree, i.e.\ the set of level $0$ vertices of the tree. Assume that we have defined all of the vertices of level $\le n$ and all of the edges between them. Then the level $n+1$ vertices of the tree correspond to the set of all ghost preimages (if such exist) of the discontinuities corresponding to level $n$ vertices of the tree. The new edges are those between discontinuities and their ghost preimages, i.e.\ a new direct edge $\betas \leftarrow \betass$ is added if and only if $\betas$ is a level $n$ vertex, $\betass$ is a level $n+1$ vertex and $\betass$ is a ghost preimage of $\betas$. 
\end{definition}

Thus $\mathcal{GT}(\beta)$ is a directed tree, with well-defined levels, and edges exist only between vertices of consecutive levels. If $\betas \leftarrow \betass$, then we say that $\betas$ is a \textit{parent} of $\betass$ and that $\betass$ is a \textit{child} of $\betas$. The types of discontinuities alternate between consecutive levels and the tree can have finitely or infinitely many levels. The following simple lemma characterises infinite ghost trees:

\begin{lemma}[Infinite ghost trees]
\label{lem:inf-gt}
The ghost tree $\mathcal{GT}(\beta)$ of some discontinuity $\beta$ is infinite if $\beta$ appears more than once as a vertex in $\mathcal{GT}(\beta)$.
\end{lemma}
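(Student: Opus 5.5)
The statement says that if $\beta$ occurs at least twice as a vertex of $\mathcal{GT}(\beta)$, then the tree has infinitely many levels. The plan is to use the fact that the tree is built by a rule that looks only at the current vertex. Whether a discontinuity $\betass$ is a child of $\betas$ (that is, a ghost preimage of $\betas$) depends only on the discontinuities $\betas, \betass$ and on $T$, not on the level at which $\betas$ appears. So the subtree hanging below any occurrence of a discontinuity $\betas$ at level $n$ is a copy of $\mathcal{GT}(\betas)$, with all levels shifted by $n$.

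\textbf{Step 1 (self-similarity).} By induction on $m$, show that if $\betas$ is a vertex at level $n$, then every vertex of $\mathcal{GT}(\betas)$ at level $m$ occurs as a vertex of $\mathcal{GT}(\beta)$ at level $n+m$. The case $m=0$ is the hypothesis. For the inductive step, the level $n+m+1$ vertices of $\mathcal{GT}(\beta)$ are by Definition~\ref{def:ghost-tree} all ghost preimages of level $n+m$ vertices. Hence every ghost preimage of a level $m$ vertex of $\mathcal{GT}(\betas)$ is a level $n+m+1$ vertex of $\mathcal{GT}(\beta)$.

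\textbf{Step 2 (iterate the recurrence).} Since $\beta$ appears more than once, it appears at level $0$ (as the root) and also at some level $p\ge 1$. Apply Step 1 with $\betas=\beta$ and $n=p$: every level $m$ vertex of $\mathcal{GT}(\beta)$ reappears at level $p+m$. In particular $\beta$ itself reappears at level $p+p=2p$, then at $3p$, and so on by induction. So $\beta$ is a vertex at level $kp$ for every $k\ge 0$. The tree therefore has nonempty levels of arbitrarily large index, i.e.\ infinitely many levels, which is what "infinite" means here.

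\textbf{Main obstacle.} The only subtle point is interpretive. Vertices are identified with discontinuities, and a single discontinuity may appear at several levels. Since $\mathcal{C}$ is finite, "infinite" must mean infinitely many levels (infinite depth), not infinitely many distinct discontinuities. I will state this interpretation explicitly.

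A natural companion remark, which I may add, is the converse. By the pigeonhole principle on the finite set $\mathcal{C}$, an infinite tree contains an infinite chain along which some discontinuity repeats. Such a repeated discontinuity $\betas$ then appears more than once in its own subtree $\mathcal{GT}(\betas)$. This converse is not needed for the statement as given.
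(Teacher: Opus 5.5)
Your proof is correct and is essentially the paper's argument: the paper observes that once $\beta$ reappears at some level $n>0$, all levels up to that reappearance must repeat, which is exactly your self-similarity Step~1 iterated in Step~2. The paper adds that the types alternate, so there is at least one level between appearances, but your argument correctly shows this is not needed, since any $p \ge 1$ already gives occurrences of $\beta$ at the unbounded levels $kp$.
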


This is exactly the case in Example \ref{ex:ghost-preimage}.

\begin{proof}
If $\beta$ appears again as a vertex on some level $n > 0$, then all of the levels that appeared before the second appearance of $\beta$ have to repeat. There is at least one level between these appearances, as the discontinuity type has to change from level to level. Thus the tree is infinite.
\end{proof}

We are now ready to state the two dynamical properties that characterise stability. All of our definitions apply to all interval components $J$ of $X$, except in the special case when no point in the interior of $J$ lands on a discontinuity and the return map $R_J$ to this component is the identity. We will call such components of $X$ \textit{dynamically trivial}, and components not of this form \textit{dynamically non-trivial}. Any interval that is referred to as dynamically trivial or a non-trivial is assumed to be an interval component of $X$. 

\begin{definition}[Absence of Critical Connections (ACC)]
\label{def:acc}
We say that a finite type $T$ satisfies the \textit{ACC} condition if the following three conditions hold: 
\begin{enumerate}
    \item (A1) For every interval component $J$ of $X$ and each point $a \in J$, the orbit of $a$ up to and not including the return time to $J$ contains at most one critical point of $T$;
    \item (A2) For every dynamically non-trivial interval $J$, none of the boundary points of $J$ land on discontinuities up to and not including the return time to $J$;
    \item (A3) For every discontinuity $\beta \notin X$, its ghost tree $\mathcal{GT}(\beta)$ does not contain $\beta$.
\end{enumerate}
\end{definition}

An example of an interval $J$ satisfying A1 and A2 is shown in Figure \ref{fig:acc}.

\begin{figure}
    \centering
    \includegraphics[width=\linewidth]{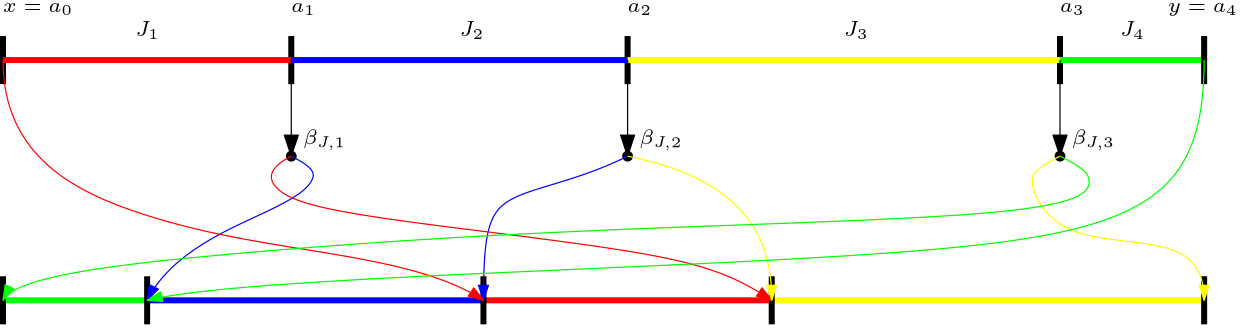}
    \caption{Full orbit of an interval $J$ for which the return map $R_J$ has four continuity intervals and satisfies A1 and A2.}
    \label{fig:acc}
\end{figure}

The reason for the name is that if any of these properties is violated, then the defining parameters $(\gamma \, \beta)$ of $T$ satisfy a non-trivial linear equation which we call a \textit{critical connection}. Note that these are the particular critical connections that we want to avoid, and that we do not avoid all of them. Note that A2 implies that there are no discontinuities in the boundary of $X$, as landing times by definition include zero. 

If A1 and A2 hold, then every vertex of the ghost tree $\mathcal{GT}(\beta)$ for a $\beta \notin X$ corresponds to a discontinuity that is not contained in $X$. Indeed, by the above, there are no discontinuities in the boundary. Thus if $\beta_2^-$ is a ghost preimage of $\beta_1^+$, and $\beta_2^-$ is contained in $X$, this means that $\beta_1^+$ is also contained in $X$. Thus every ghost preimage of a $\beta \notin X$ is also not contained $X$, and the claim follows by induction. 

\begin{definition}[Matching]
\label{def:matching}
We say that a finite type $T$ satisfies the \textit{Matching} condition if for every dynamically non-trivial interval $J$, exactly one point $a$ in the interior of $J$ lands on a critical point before returning to $J$.
\end{definition}
The reason we call this property Matching is that if $a$ is the single point from above, then we have that:

\begin{enumerate}
    \item $J = [R_J(a^+), R_J(a^-))$;
    \item $R^2_j(a^-) \sim R^2_J(a^+)$,
\end{enumerate}
and thus the second iterates under $R_J$ of $a^+$ and $a^-$ `match'. Figure \ref{fig:matching} illustrates the dynamics of an interval $J$ that satisfies Matching. In this case, it is allowed for the point $a$ to land on multiple discontinuities before returning to $J$ and for the boundary points $x$ and $y$ to land on discontinuities. Recall from the beginning of subsection \ref{subsec:vec+pert-lemma} that $m_{j}^{\pm}$ is the number of discontinuities that $a_{j}^{\pm}$ lands on before returning to $J$.

\begin{figure}[h]
    \centering
    \includegraphics[width=0.9\linewidth]{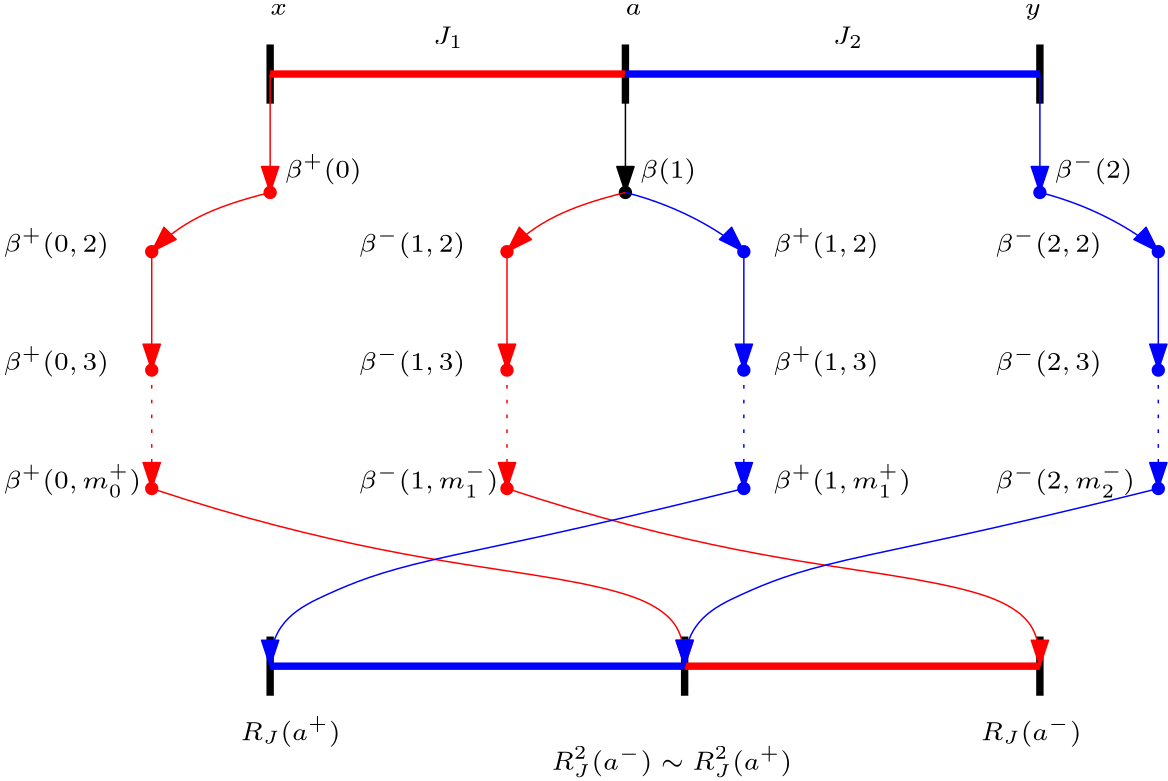}
    \caption{Full orbit of an interval $J$ satisfying Matching.}
    \label{fig:matching}
\end{figure}

An analogous property, also called the Matching, has been discussed in slightly different contexts (see \cite{MR3893724}, \cite{MR3597033}, \cite{MR2422375}). If $T$ satisfies Matching, then the return map to every interval component $J$ of $X$ is either a rotation or the identity, which is a very restrictive condition.

With all of the definitions out of the way, we can now recall the statement of our main theorem:

\begin{theorem}[Characterisation of Stability]
\label{thm:stability=acc+m}
A finite type interval translation map $T$ is stable if and only if it satisfies the ACC and Matching properties.
\end{theorem}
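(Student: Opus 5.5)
I will prove the two directions separately. Throughout I assume, without loss of generality, that $T(I)$ is compactly contained in the interior of $I$, so that the Perturbation Lemma~\ref{lem:pert-lem} applies.

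\textbf{Sufficiency (ACC $+$ Matching $\Rightarrow$ stable).}

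First I handle a dynamically non-trivial component $J$. By Matching there is a single interior point $a$ landing on a critical point before returning. By A1, $a^\pm$ each land on exactly one discontinuity $\beta(1)^\pm$ before returning, and by A2 the boundary points $x,y$ land on no discontinuity before returning. Hence $J=[R_J(a^+),R_J(a^-))$, and the touching relation $R_J^2(a^-)\sim R_J^2(a^+)$ is a consequence of the combinatorics of $J_1,J_2$. It is not an extra linear equation on the parameters, since both orbits pass through the same translations in permuted order (as in Example~\ref{ex:stable-map}).

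All relevant points stay in open continuity intervals, so the itineraries of $\beta(1)^\pm$ up to return times persist under small perturbation. The interval $\tilde J=[\tilde T^{k}(\tilde\beta^+),\tilde T^{k'}(\tilde\beta^-))$ is then a forward-invariant union of two pieces, rotated by $R_{\tilde J}$. This gives a continuation $\tilde J\subset\tilde X$ that is Hausdorff close to $J$ and contains $\tilde\beta(1)$ in its interior.

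Dynamically trivial components are unions of periodic intervals whose endpoints land on discontinuities. By A1 and A2 these are continued in the same way, using the fact that the interior contains no precritical points.

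This shows $\overline X$ is lower semicontinuous. For upper semicontinuity, I take a point $z$ at a definite distance from $\overline X$. Since $T$ is of finite type, $T^n(I)=X$ for some $n$, and I argue that $\tilde T^{n}(I)$ lies in a small neighbourhood of $\tilde X$ except possibly near orbits of discontinuities $\beta\notin X$. New mass in $\tilde X$ far from $X$ must be a new periodic interval, or part of a new Cantor set (which is excluded by finite type being open here, to be checked). By the Orbit Classification Lemma~\ref{lem:orb-class}, the boundary of such a new component is made of continuations of discontinuities outside $X$. Following boundary points backward produces a chain in which each $\tilde\beta^+$ is almost hit by a $-$-type discontinuity, i.e.\ a ghost preimage relation for $T$. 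Finiteness forces a cycle, so $\beta\in\mathcal{GT}(\beta)$, contradicting A3.

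Condition (2) of stability, the constant count of discontinuities outside $\tilde X$, follows because A2 keeps discontinuities off $\partial X$.

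\textbf{Necessity (stable $\Rightarrow$ ACC $+$ Matching).}

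By Lemma~\ref{lem:stable-fin-type}, $T$ is of finite type. I argue by contraposition on each condition.

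\emph{Failure of A3.} If $\beta\in\mathcal{GT}(\beta)$ for some $\beta\notin X$, I generalise Example~\ref{ex:ghost-preimage}. Using part (b) of the Perturbation Lemma, I push each critical connection in the ghost cycle by $\pm\epsilon$ in the appropriate direction. This creates a periodic interval near $\beta$, which lies at a definite distance from $X$, contradicting Hausdorff continuity.

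\emph{Failure of A2.} If a boundary point of a non-trivial $J$ lands on a discontinuity $\beta$, then $\beta$ sits in $\partial X$ or is connected to it. I perturb using part (a) of the Perturbation Lemma so that the relevant continuity interval is split. Then either the discontinuity leaves $\tilde X$, changing the count in condition (2), or a definite piece of $J$ escapes.

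\emph{Failure of A1 or Matching.} This is the main step. Here $R_J$ has $N\ge3$ continuity intervals, or a point hits two discontinuities. I use the Linear Dependence Lemma~\ref{lem:lin-dep-rj} to show that the touching equations $R_J(a_{i_k}^-)\sim R_J(a_{i_{k+1}-1}^+)$ (together with the critical connection equations) are not all implied by the combinatorics. The lemma forces the coefficients in any vanishing combination to coincide along chains. Consequently, with more than one interior cut point, or with an extra connection, some equation is independent of the others.

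Using the Perturbation Lemma with independent choices of $\epsilon^\gamma_j$ and $\epsilon^\beta_j$, I then perturb so that one touching pair separates. This opens a gap: the orbit of part of $J$ exits, leaving a definite shorter component or splitting $J$ into two. That changes the number of components of $\overline{\tilde X}$, contradicting homeomorphism.

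I expect the main obstacle to be showing that the opened gap has definite size and is not refilled. For this I would take the perturbation realising a Diophantine-like rotation on one sub-piece, and use the invertibility of $R_J$ on the remaining pieces.
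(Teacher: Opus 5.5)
\textbf{There is a genuine gap in the necessity of A1, A2 and Matching.} Your plan is to use the Perturbation Lemma to separate one touching pair $R_J(a_{i_k}^-)\sim R_J(a_{i_{k+1}-1}^+)$ (or one extra critical connection), and then conclude that a gap opens in $\tilde J$. The paper explains, in its overview of the proof, that it cannot complete exactly this route.

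The gap you create has size $O(\epsilon)$. To contradict stability you need a \emph{topological} change: a split of $J$, or a changed count of discontinuities. Nothing prevents the gap from being refilled. The same perturbation can flip the itinerary of another piece of $J$ at a critical connection you do not control (e.g.\ $\betas^-$ landing on some $\betass^-$ now passing to the right of $\betass$). That piece can then return and cover the gap, leaving $\overline X(\tilde T)$ Hausdorff close to and homeomorphic to $\overline X(T)$.

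Your suggested remedy (a ``Diophantine-like rotation on one sub-piece'' plus invertibility of $R_J$) does not address this. After the perturbation $R_{\tilde J}$ is no longer a bijection, and you have no control over the long-term fate of the escaping and re-entering mass. The same objection applies to your A2 step: the dichotomy ``the discontinuity leaves $\tilde X$, or a definite piece of $J$ escapes'' is not justified. Your A3 step, done first, needs the ghost cycle to lie outside $\overline X$ so that the new periodic interval is at positive distance from it and Perturbation Lemma (b) applies. The paper derives this from the absence of discontinuities on $\partial X$, which is available only after A1 and A2.

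The missing idea is to reverse the logic.
\begin{enumerate}
\item First assume stability together with A1 and A2. Stability supplies a continuation $\tilde J$. A1 and A2, together with Lemma \ref{lem:not-fixed} and bijectivity of return maps, force the whole combinatorics of $R_{\tilde J}$ to persist for \emph{every} small perturbation (Proposition \ref{prop:stab-return-map}). This means the cut points $\tilde a_i=\tilde T^{-l_i}(\tilde\beta_{J,i})$, the itineraries, the return times and the permutation.
\item Hence each touching relation holds identically on an open set of parameters. Varying one $\beta_s$ or one $\gamma_s$ at a time forces both sides to involve the same discontinuity and the same counts $k_s$. This yields an identity among return-map vectors, such as $v_{i_j}=v_{i_j+1}$ (or $v_1+v_3=v_2$ when $N=3$), which is a linear dependence excluded by Lemma \ref{lem:lin-dep-rj}.
\item So the Linear Dependence Lemma is used to show that these relations \emph{cannot} all persist, not to build a separating perturbation.
\item A1 and A2 are then obtained indirectly. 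Pick $\tilde T$ in the stability neighbourhood with rationally independent parameters. It satisfies A1 and A2, hence Matching by the previous step. For such maps every orbit in $\tilde X$ meets at most one component containing a discontinuity (Lemma \ref{lem:indep-orb}), and no discontinuity lies on $\partial\tilde X$.
\item Hausdorff continuity and the constant count of discontinuities transfer these properties back to $T$. This rules out a point of $J$ hitting two discontinuities, and rules out a boundary point and the cut point both hitting one.
\end{enumerate}

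Your sufficiency direction is essentially the paper's, with two corrections. A dynamically trivial component persists because it is an iterate of a continuity piece of some non-trivial $J_\beta$ (Lemmas \ref{lem:indep-orb} and \ref{lem:triv-int-struct}), not because of its own combinatorics. The ``new Cantor set'' case is settled by the Orbit Classification Lemma, not by openness of finite type, since that openness is part of what is being proved.
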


The proof of Theorem \ref{thm:stability=acc+m} is contained in the next Section \ref{sec:proof-stability=acc+m}. In subsection \ref{subsec:acc+m=stab-pf-overview} we give an informal overview of the proof, while the proof is spread over subsections \ref{subsec:s+a1+a2->m}, \ref{subsec:s->acc} and \ref{subsec:acc+m->stability}.

\section{Characterisation of Stability}
\label{sec:proof-stability=acc+m}

\subsection{Overview of the proof of Theorem \ref{thm:stability=acc+m}}
\label{subsec:acc+m=stab-pf-overview}

It turns out to be surprisingly tricky to show the `only if' direction of Theorem \ref{thm:stability=acc+m}, i.e.\ that a map violating either ACC or Matching is not stable. To prove that a map is not stable, one needs to produce arbitrarily small perturbations that discontinuously change $X$. A natural way to do this is to perturb the map at the places where it violates either ACC or Matching.

Consider for example the return map to an interval component $J$ as in in Figure \ref{fig:a1 violation}, for which the point $a_2^+$ violates A1 and lands on two critical points $\beta^+$ and $\beta_*^+$.

\begin{figure}[h]
    \centering
    \includegraphics[width=0.85\linewidth]{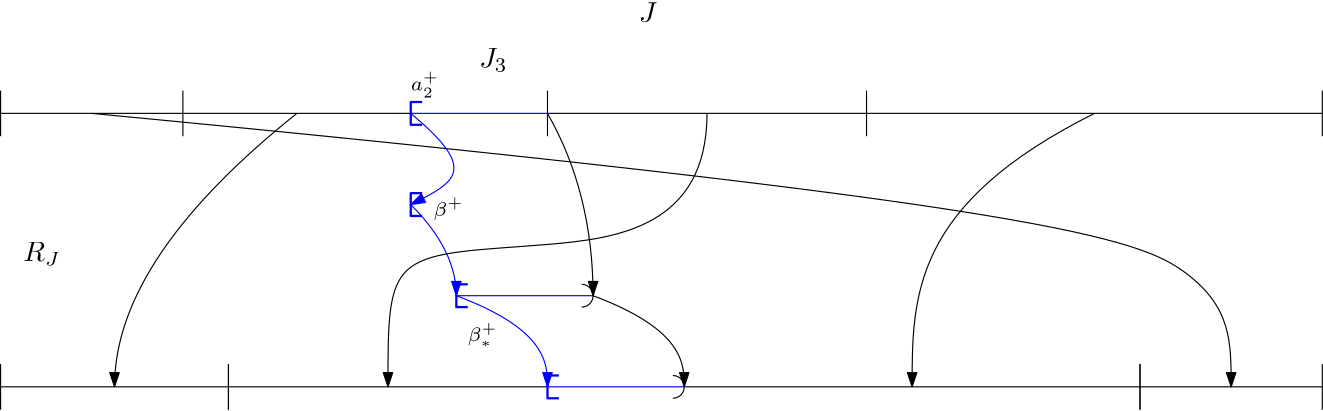}
    \caption{An example of a map for which an interval component $J$ of $X$ violates A1 at the point $a_2^+$. The dynamics of the point $a_2^+$ and the corresponding interval are in blue.}
    \label{fig:a1 violation}
\end{figure}

One would expect that perturbing the parameters $(\gamma \, \beta)$ in such a way that $\beta^+$ does not land on $\beta_{*}^+$ would result in a hole for the return map to $R_J$, as in Figure \ref{fig:pert-a1}, which would change the topology of $X$: the interval $J$ should split into (at least) two smaller intervals. In the several figures that follow, we will mark the dynamics that change under perturbation in \textcolor{red}{red}, and the remaining dynamics by other colours.

\begin{figure}[h]
    \centering
    \includegraphics[width=0.85\linewidth]{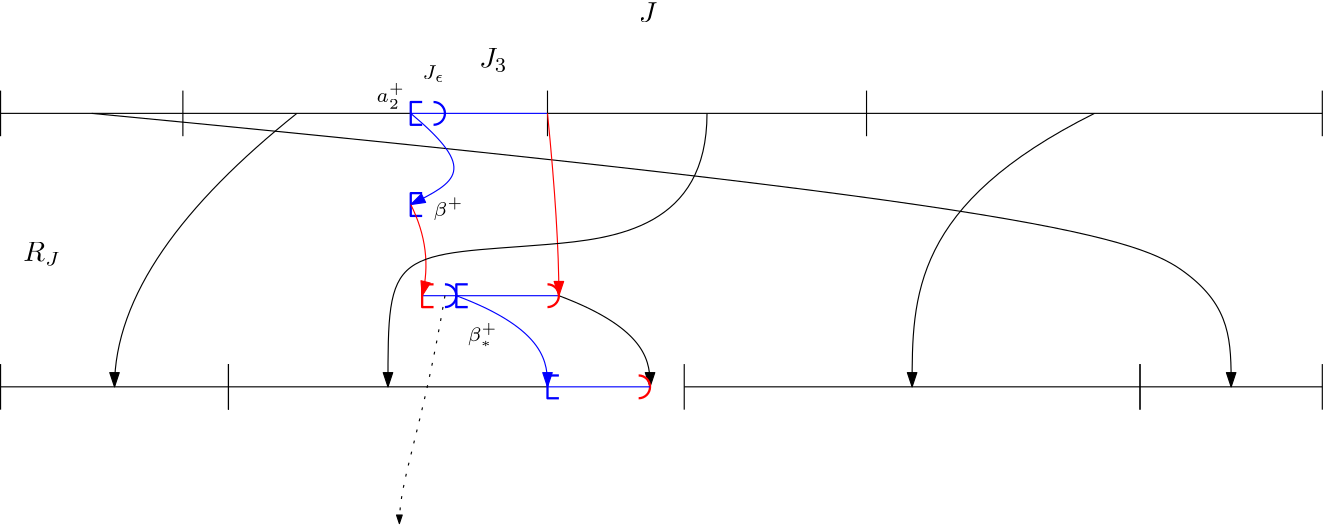}
    \caption{The red arrows indicate the dynamical changes after perturbation: the orbit of $a_2^+$ has moved to the left of $\beta_*^+$. Assuming that the remaining dynamics of the return map stay the same, it is expected that now only the smaller interval $J_3 \setminus J_\epsilon$ returns to $J$, creating a hole.}
    \label{fig:pert-a1}
\end{figure}

However, an arbitrarily small change in the parameters $(\gamma \, \beta)$ possibly affects the long-term behaviour of $\beta^-_*$ as well. This is possible if $\beta^-_*$ lands on some other discontinuity $\beta_{**}^-$. Our perturbation might have then caused $\beta^-_*$ to now land to the right of $\beta_{**}$, even if the perturbation is arbitrarily small. If $\beta_{**}^+$ also landed into $J$ near the right endpoint of $R_J(J_3)$, it is possible that $J_{\epsilon}$ now lands back into $J$ to fill the hole created by the change in the orbit of $J_3$. All of this is illustrated in Figure \ref{fig:pert-a1-j-preserved}.

\begin{figure}
    \centering
    \includegraphics[width=0.85\linewidth]{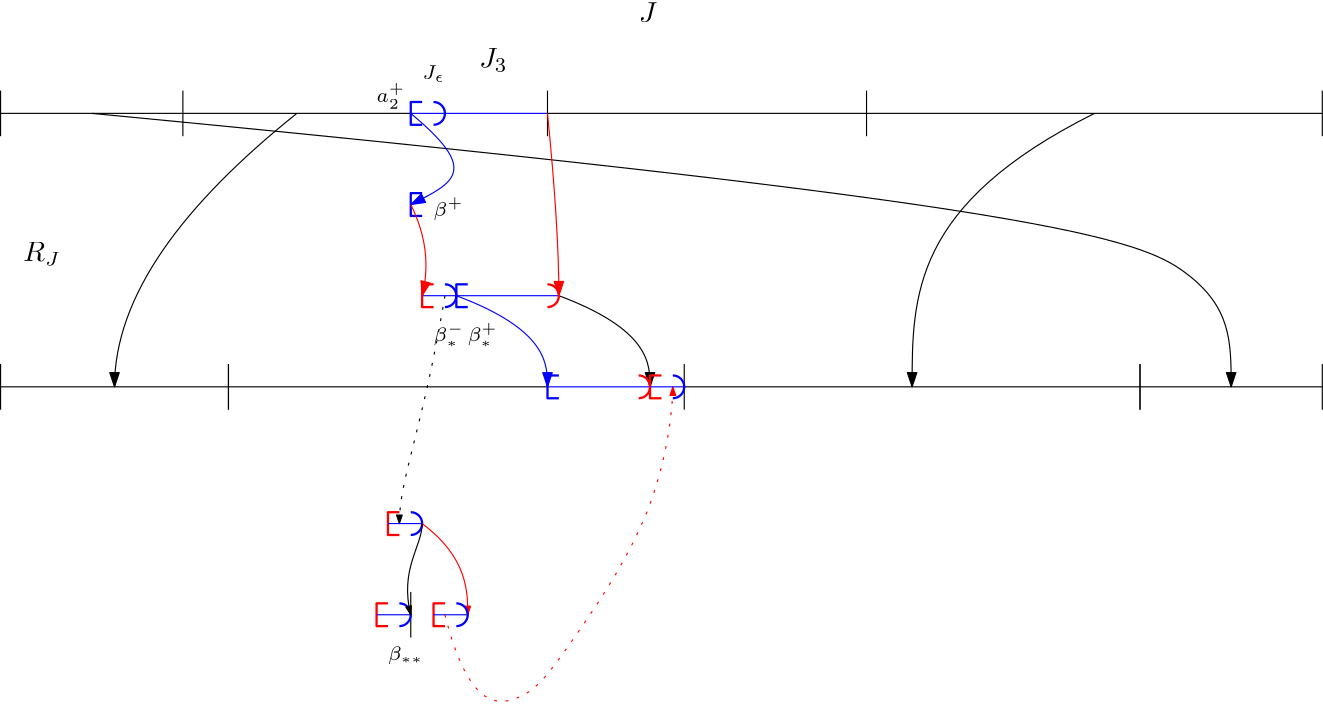}
    \caption{The perturbation could also cause the orbit of $J_{\epsilon}$ to eventually return to $J$ and fill the hole from Figure \ref{fig:pert-a1}.}
    \label{fig:pert-a1-j-preserved}
\end{figure}

This example is a special case of a general phenomenon that perturbing critical connections for the purpose of changing the topology of $X$ may cause itinerary changes at other critical connections, which then still lead to a continuous change in $X$. In general, there is no good way to rule out that this happens for every natural choice of explicit perturbation coming from the violation of either ACC or Matching. On the other hand, there is also no good way to track the dynamics of the return map $R_J$ to $J$ if we choose some generic non-explicit perturbation, which means we are unable to show that stability gets violated in this case either. 

In view of the discussion above, instead of showing that a stable map satisfies ACC and Matching, we adopt an indirect approach and first assume that the map is stable \textit{and satisfies conditions A1 and A2}. We are then able to show that the only way such a map retains the topological structure of $X$ under perturbation is if the dynamics of return maps to interval components of $X$ remain the same (Proposition \ref{prop:stab-return-map}). In particular, this rules out changes in the return map as in Figure \ref{fig:pert-a1-j-preserved}. This then forces the map to also satisfy Matching (Proposition \ref{prop:exactly-1}), but even this turns out to be non-trivial, as we now explain.

Recall that $k_s(x,n)$ is the number of entries of the orbit of $x$ to the interval $I_s$ up to, but not including, time $n$:
\[
k_s(x,n) \coloneqq \# \{ T^j(x) \in I_s \text{ for } 0 \le j < n \}.
\]
We have the following explicit formula for any iterate of $x$:
\[
T^n(x) = x + \sum_{s=1}^r k_s(x,n) \gamma_s.
\]

Consider an interval component $J$ of $X$ for which the return map has $N$ continuity intervals. By Proposition \ref{prop:stab-return-map}, the dynamics of the return map to $J$, i.e.\ the itineraries of continuity intervals up to the return time to $J$, remain the same. Consider the equation corresponding to the $j$'th critical value of $R_J$ (recall the notation introduced at the beginning of Subsection \ref{subsec:stab=acc+m-dfn-exam}):

\[
\begin{array}{cc}
     &R_J(a_{i_j}^-) \sim R_J(a_{i_j-1}^+)  \\[1mm]
     \iff &T^{r_{i_j}}(a_{i_j}^-) \sim  T^{r_{i_j-1}}(a_{i_j-1}^+) \\[1mm]
     \iff &T^{r_{i_j}-l_{i_j}}(\beta_{i_j}^-) \sim  T^{(r_{i_j-1})-(l_{i_j-1})}(\beta_{i_j-1}^+) \\[1mm]
     \iff &\beta_{i_j} + \sum_{s=1}^r k_s(\beta_{i_j}^-, r_{i_j} - l_{i_j}) \gamma_s = \beta_{i_j-1} + \sum_{s=1}^r k_s(\beta^+_{i_j-1}, r_{i_j-1}-l_{i_j-1}) \gamma_s.
\end{array}
\]

If the dynamical structure of $R_J$ remains the same for every sufficiently small perturbation, then all these equations also remain true for any sufficiently small perturbation. In particular, they remain the same if we change only a single parameter $\beta_s$, for $1 \le s \le r-1$, or $\gamma_s$, for $1 \le s \le r$. This forces the following equalities:
\[
\begin{array}{cc}
     \beta_{i_j} = \beta_{i_j-1} \text{ for all } 1 \le j < N; \\
     k_s(\beta_{i_j}^-, r_{i_j} - l_{i_j}) = k_s(\beta_{i_j-1}^+,r_{i_j-1}-l_{i_j-1}) \text{ for all } 1 \le j < N \text{ and } 1 \le s \le r.
\end{array} 
\]

Although this is a very restrictive condition, there is a priori no reason why it should not be possible. It is here where one needs to invoke the Linear Dependence of Return Map Vectors Lemma (Lemma \ref{lem:lin-dep-rj}). More precisely, by putting all of these equalities together in an appropriate way, one can show that the dynamical vectors associated to $J$ violate the form of linear dependence forced by Lemma \ref{lem:lin-dep-rj}, unless $N \le 2$, which shows Matching. We have now shown that Stability, A1 and A2 imply Matching (Theorem \ref{thm:s+A1+A2=M}), so what remains to show is that Stability implies ACC (Theorem \ref{thm:s-acc}).

To show that a stable map satisfies A3, we use the Perturbation Lemma \ref{lem:pert-lem} to produce an explicit perturbation that enlarges the set $X$ in a discontinuous way when A3 is violated. As A1 and A2 are generic conditions, arbitrarily near any stable map, there are stable maps satisfying A1 and A2, and therefore Matching. Matching implies that the orbits of different discontinuities never pass through the same interval components of $X$. For sufficiently small perturbations, this separation property for orbits of discontinuities can be `pulled back' to the original map. This implies properties A1 and A2, which completes the proof.

For the `if' direction (Theorem \ref{thm:acc+m->stability}), consider a map $T$ that satisfies A1, A2 and Matching. Then by Matching, the return map to every dynamically non-trivial interval component $J = [x,y)$ of $X$ is isomorphic to a circle rotation. Let $a$ be the single point in the interior of $J$ that lands on a discontinuity of $T$. Then by A1 and A2, $a$ is the only point in $J$ that lands on a discontinuity, and $a$ lands only on one discontinuity of $T$ before returning to $J$ (see Figure \ref{fig:pert-a1-a2-m}). Let $\beta$ denote this discontinuity of $T$, and let $l$ be the landing time of $a$ to $\beta$, so that $a := T^{-l}(\beta)$. Let $J_1 = [x,a)$ and $J_2 =[a,y)$ be the two continuity intervals of $J$, and let $r_1$ and $r_2$ be the return times of $J_1$ and $J_2$ to $J$, respectively. Thus $x^+ = T^{
r_2}(a^+)$ and $y^- = T^{r_1}(a^-)$. We can now consider the equation at the only critical value of $R_J$:

\[
\begin{array}{cc}
     &R_J^2(a^-) \sim R_J^2(a^+)  \\[1mm]
     \iff &T^{r_2 + r_1}(a^-) \sim  T^{r_1 + r_2}(a^+) \\[1mm]
     \iff &a + \sum_{s=1}^r k_s(a^+, r_2 + r_1) \gamma_s = a + \sum_{s=1}^r k_s(a^-,r_1 + r_2) \gamma_s \\[1mm]
     \iff &0 = 0,
\end{array}
\]

where the last equality follows from the fact that the itinerary of $a^-$ up to time $r_2 + r_1$ is equal to the itinerary of $J_1$ up to time $r_1$ followed by the itinerary of $J_2$ up to time $r_2$, while the itinerary of $a^+$ up to time $r_1 + r_2$ is equal to the itinerary of $J_2$ up to time $r_2$ followed by the itinerary of $J_1$ up to time $r_1$.

Because $T$ satisfies A1 and A2, for a sufficiently small perturbation, there will still be a point in the interior of $\Tilde{J}$ that lands on $\Tilde{\beta}$ after $l$ iterates, and both the $-$ and $+$ part of this point will have the same itinerary up to time $r_1 + r_2$. Thus the structure of the return map will remain the same, so the interval $J$ moves continuously under perturbation (see Figure \ref{fig:pert-a1-a2-m}).

Thus the set $X$ moves at least lower-semicontinuously under perturbation. The difficult part is then to show that the only way $X$ can discontinuously increase is if there exists an infinite ghost tree, which would violate A3. This is done by considering the set of itinerary changes for critical points not contained in $X$ that are possible for arbitrarily small perturbations. It can be shown that such itinerary changes can produce new interval components of $X$ for arbitrarily small perturbations if and only if some discontinuity has an infinite ghost tree, which completes the proof. 

\begin{figure}[th]
    \centering
    \includegraphics[width=0.7\linewidth]{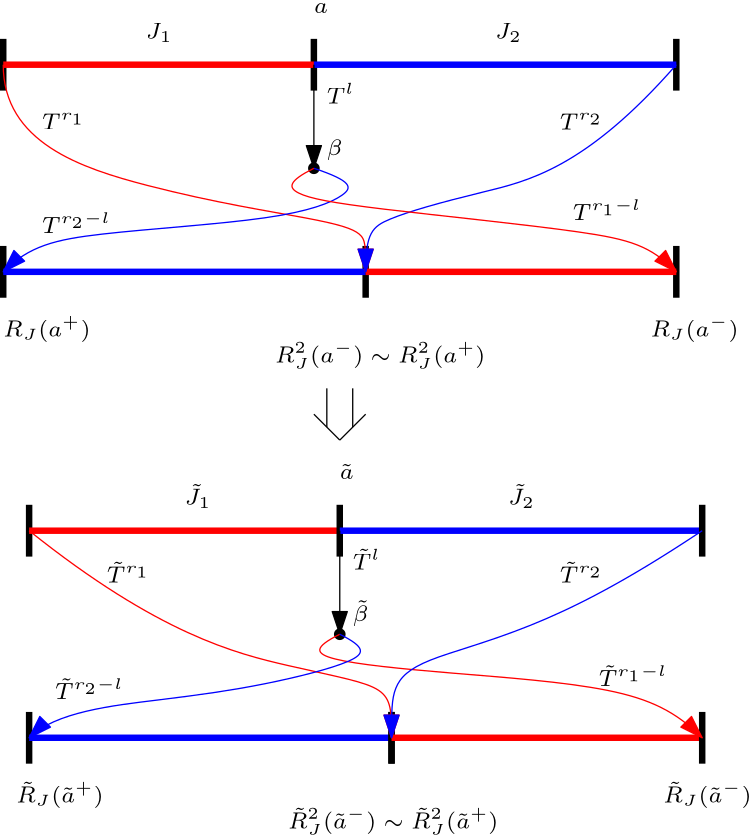}
    \caption{Perturbation of a return map $R_J$ satisfying A1, A2 and Matching. The endpoints of the interval $J$ and the point $a$ move continuously, and the dynamical structure of $R_J$ remains the same.}
    \label{fig:pert-a1-a2-m}
\end{figure}

\subsection{Stability, A1 and A2 imply Matching}
\label{subsec:s+a1+a2->m}

The main property that A1 and A2 allow for is control over finite time itineraries for any sufficiently small perturbation. To make this precise, consider any point $x \in I$ that does not land on a critical point before time $n$. Let $s(x,i)$, for $0 < i < n$, be index such that  the iterate $T^i(x)$ is contained in $I_{s(x,i)}$. Associated to each iterate $T^i(x)$ we therefore have the following two vectors:

\begin{align*}
v^{i,+} &:= \left(\sum_{1 \le s \le r} i_s(x) \, \bm{e}_s, \,  -\bm{f}_{s(x,i)-1}\right); \\
v^{i,-} &:= \left(\sum_{1 \le s \le r} i_s(x) \, \bm{e}_s, \, -\bm{f}_{s(x,i)}\right).
\end{align*}

Thus we have the following formula for the distance between an iterate of $x$ and the critical points in $I_{s(x,i)}$:

\begin{align*}
T^i(x) - \beta_{s(x,i)-1}^+ = x + \langle v^{i,+}, (\gamma \, \beta) \rangle; \\
\beta_{s(x,i)}^- - T^i(x) = -\langle v^{i,+} , (\gamma \, \beta) \rangle - x.
\end{align*}

By the assumption on time $n$, all of the quantities above are positive for $0 < i < n$. Thus they remain positive for all sufficiently small perturbation $\Tilde{T}$ of $T$, because they clearly depend continuously on $(\gamma \, \beta)$. This means that the itinerary of $x$ up to time $n$ remains the same for all sufficiently small perturbations. If the perturbations we make make are arbitrarily small, then the itineraries up to time $n$ do not change for a whole interval of points containing $x$. This follows immediately from the fact that if some orbit up to time $n$ of some point $x$ does not land on a discontinuity, then the same holds for an interval $J$ containing $x$ in its interior. Thus if we take a slightly smaller interval $J'$ containing $x$, then its itinerary up to time $n$ will not change for all sufficiently small perturbations.

We would like to have the same conclusions for itineraries of critical points. For this we have to slightly change the argument because the points themselves change under perturbation. Assume that some discontinuity $\beta$ does not land on a discontinuity before time $n$. Then analogously as above we define $s(\beta,i)$, for $0 < i < n$, to be index such that  the iterate $T^i(\beta)$ is contained in $I_{s(\beta,i)}$. Again, we have the following vectors:

\begin{align*}
v^{i,+}(\beta) &:= \left(\sum_{1 \le s \le r} i_s(\beta) \, \bm{e}_s, \, \bm{f}_{\text{ind}(\beta)} -\bm{f}_{s(x,i)-1}\right); \\
v^{i,-}(\beta) &:= \left(\sum_{1 \le s \le r} i_s(\beta) \, \bm{e}_s, \, \bm{f}_{\text{ind}(\beta)} -\bm{f}_{s(x,i)}\right),
\end{align*}

and the following formula for the distance from the critical points in $I_{s(x,i)}$:

\begin{align*}
T^i(\beta) - \beta_{s(x,i)-1}^+ = \langle v^{i,+}, (\gamma \, \beta) \rangle; \\
\beta_{s(x,i)}^- - T^i(\beta) = -\langle v^{i,+} , (\gamma \, \beta) \rangle .
\end{align*}

Thus by the same argument as before, the itinerary of $\beta$ up to time $n$ does not change for all sufficiently small perturbations.

We can analogously do this for the backward itinerary, i.e.\ a choice of consecutive $T$-preimages, of any point $x$ or discontinuity $\beta$, so it also persists under sufficiently small perturbations. If we assume A2, then the itinerary of every boundary point $x$ of $X$ also remains the same after a sufficiently small perturbation. Similarly as before, there is an interval $J'$ containing $x$ in its interior for which the itinerary does not change for all sufficiently small perturbations.

We will now put together these observations to prove that, under the assumptions of stability, A1 and A2, the structure of the entire return map $R_J$ to any dynamically non-trivial interval $J$ remains stable for a sufficiently small perturbation. More precisely, we will prove that the number of points that land on discontinuities remains the same, that the images of $J_1, \dots, J_N$ under $R_J$ remain in the same order and that their return times remain the same. We first give a lemma that eliminates some annoying cases in the proof of Proposition \ref{prop:stab-return-map}.

\begin{lemma}
\label{lem:not-fixed}
Let $T$ be a stable map that also satisfies property A2. Then $R_J(J_1) \neq J_1$ and $R_J(J_{N}) \neq J_{N}$, for any dynamically non-trivial interval $J$. In other words, the return map cannot fix the boundary subintervals. 
\end{lemma}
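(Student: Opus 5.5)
We argue by contradiction. Suppose $J=[x,y)$ is a dynamically non-trivial interval component of $X$ and $R_J(J_1)=J_1$; the case $R_J(J_N)=J_N$ is symmetric. Since $R_J$ is a bijection of $J$ that is a translation on each continuity interval (Lemma~\ref{lem:rj-facts}), $R_J(J_1)=J_1$ means that $J_1=[x,a_1)$ is an interval of periodic points of period $r_1$, fixed by $T^{r_1}$. Non-triviality gives $N\ge 2$, so $a_1$ lies in the interior of $J$. Then $R_J$ restricted to $[a_1,y)$ is a bijection of $[a_1,y)$. Thus $J$ splits into two invariant pieces, $J_1$ and $J':=[a_1,y)$, whose return dynamics are independent and which touch at $a_1$.

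The plan is to exhibit an arbitrarily small perturbation that separates these two pieces and so changes the topology of $\overline{X}$, contradicting stability. The point $a_1$ lands on the discontinuity $\beta(1)$ at time $l_1<r_1$. By A2, $x$ lands on no discontinuity before its return to $J$. Since $T^{r_1}(x)=x$ and $T^{r_1}(a_1^-)=a_1^-$, the endpoint $a_1^-$ of the periodic interval is periodic and lands on $\beta(1)^-$, while its orbit closes up after $r_1$ steps.

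I would apply the Perturbation Lemma~\ref{lem:pert-lem} to $J$, assuming as usual that $T(I)$ is compactly contained in the interior of $I$. I would keep every $\epsilon^\beta_j=0$ and every $\epsilon^\gamma_j=0$ for $j\ge 2$, so the dynamics on the continuation of $J'$ stays the same, and set $\epsilon^\gamma_1=\delta\neq 0$ small. Then $\tilde J_1$ has translation factor $\delta$ at time $r_1$ and is no longer periodic with period $r_1$. The sign of $\delta$ is chosen so that $\tilde T^{r_1}(\tilde J_1)$ is shifted to the right, overlapping $\tilde J'$, or to the left, leaving the gap $[\tilde x,\tilde x+|\delta|)$ uncovered. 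In the second case the points near $\tilde x$ have no preimage inside $\tilde J$ from $\tilde J_1$. Since $\tilde J'$ is still mapped onto itself, they also have no preimage from $\tilde J'$. Hence, for large $n$, a definite part of $\tilde J_1$ leaves $\tilde X_n$, or at least the boundary structure of $\tilde J$ changes.

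More robustly, I would argue through the boundary point. By A2, $x^+$ and its itinerary up to time $r_1$ persist under small perturbations (by the persistence discussion preceding this lemma), and $T^{r_1}(x^+)=x^+$. Lemma~\ref{lem:conseq-stab} says $\tilde x$ is a boundary point of $\tilde X$ depending continuously on $\tilde T$. Lemma~\ref{lem:J-dynamics}(a) says it must be an iterate of a critical value, whereas $x$ is a non-critical periodic point. The perturbation makes $\tilde T^{r_1}(\tilde x)\ne\tilde x$, which breaks the periodic orbit through the boundary. I would track where the continuation of the left endpoint of $\tilde J$ must lie: in the interior of the former $J_1$, or outside the former closure.

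The main obstacle is the case in which $J_1$ shrinks only slightly rather than by a definite amount. Either way I must show that the homeomorphism type of $\overline{X}$, or the count of discontinuities outside $\overline X$, changes. I would handle this by iterating. For the shifted map, $\tilde T^{kr_1}$ on $\tilde J_1$ is a translation by $k\delta$ as long as the orbit stays inside the periodic itinerary region. So after about $|J_1|/|\delta|$ steps the whole of $\tilde J_1$ is pushed either into $\tilde J'$ or out through $\tilde x$. The points of $\tilde J_1$ are therefore not recurrent, and $\tilde X$ near $J_1$ loses a definite interval. This contradicts the Hausdorff continuity in Definition~\ref{def:stable}, since $|J_1|>0$ is fixed while $\delta\to 0$.
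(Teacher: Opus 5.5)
Your perturbation argument has a genuine gap at its decisive step, the claim that ``$\Tilde{X}$ near $J_1$ loses a definite interval''. The Perturbation Lemma~\ref{lem:pert-lem} only prescribes itineraries and translation factors along the orbit of $\Tilde{J}$ up to the return times. It gives no control over $\Tilde{X}$ as a whole. In particular it gives no control over $\Tilde{T}$-preimages of points of $\Tilde{J}_1$ coming from outside the orbit of $\Tilde{J}$. Example~\ref{ex:ghost-preimage} shows that $\overline{X}$ can jump \emph{up} under arbitrarily small perturbations. So ``the points of $\Tilde{J}_1$ drift under $\Tilde{T}^{r_1}$'' does not imply that they leave $\overline{X}(\Tilde{T})$; also, non-recurrence is not the same as wandering.

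There are further problems:
\begin{itemize}
\item The case where $\Tilde{J}_1$ is pushed out through $\Tilde{x}$ is never analysed. You do not know what happens to these points once they exit $\Tilde{J}$.
\item The ``More robustly'' paragraph stops without reaching a contradiction.
\item Your case split has the sides swapped. A shift by $-|\delta|$ leaves $[\Tilde{a}_1-|\delta|,\Tilde{a}_1)$ uncovered, whereas $[\Tilde{x},\Tilde{x}+|\delta|)$ is the gap created by a shift to the right.
\end{itemize}

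No perturbation is needed, and you already wrote down the key fact (``$x$ is an iterate of a critical point but is a non-critical periodic point''). What is missing is the step that turns this into a contradiction:
\begin{itemize}
\item Stable maps are of finite type (Lemma~\ref{lem:stable-fin-type}), so $T|_X$ is a bijection.
\item By Lemma~\ref{lem:J-dynamics}(a), $x=T^m(\beta^+)$ with $\beta^+\in X\cap\mathcal{C}$.
\item Choose $k$ with $kr_1\ge m$. Since $T^{r_1}(x)=x$, we have $T^m(\beta^+)=x=T^m\bigl(T^{kr_1-m}(x)\bigr)$.
\item Injectivity of $T^m$ on $X$ gives $\beta^+=T^{kr_1-m}(x)\in\{x,T(x),\dots,T^{r_1-1}(x)\}$.
\item Hence $x$ lands on a discontinuity before returning to $J$, contradicting A2.
\end{itemize}
This is exactly the paper's proof.

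There is an even more direct route under the standing assumption that $T(I)$ is compactly contained in the interior of $I$. This assumption gives $x>0$ and $T^i(x)>0$ for $i\ge1$. A2 says $T^i(x)$ avoids the discontinuities for $0\le i<r_1$. So a two-sided neighbourhood of $x$ follows the itinerary of $J_1$ for $r_1$ steps, and $T^{r_1}$ is the identity there. Points just left of $x$ are then periodic, hence in $X$, contradicting that $x$ is the left endpoint of the component $J$.
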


\begin{proof}
We give the proof for $J_1$, with the one for $J_{N}$ being analogous. Assume the opposite, that $R_J(J_1) = J_1$. Then $R_J(x) = x$, i.e.\ $x$ is periodic. By Lemma \ref{lem:J-dynamics}, $x = T^m(\beta^+)$ for some $\beta^+ \in X$. As $T$ is of finite type, $T\vert_X$ is a bijection, and thus $\beta^+$ must be in the forward orbit of $x$. Thus $x$ must land on $\beta^+$ before it returns to $J$, which contradicts A2. 
\end{proof}

\begin{proposition}[Stability of the return map]
\label{prop:stab-return-map}
Let $T$ be a stable map that also satisfies properties A1 and A2, and let $\Tilde{T}$ be a sufficiently small perturbation of $T$. Let $J = [x, y)$, where $x = T^{m_1}(\beta_{J,\tau(1)-1}^+)$ and $y = T^{m_2}(\beta_{J,\tau(N)}^-)$, be a dynamically non-trivial interval and let $\Tilde{J}$ be its continuation. Then the following properties hold:
\begin{enumerate}
    \item $\Tilde{J} = [\Tilde{T}^{m_1}(\Tilde{\beta}_{J,\tau(1)-1}^+),\Tilde{T}^{m_2}(\Tilde{\beta}_{J,\tau(N)}^-))$;
    \item The only points in $\Tilde{J}$ that land on critical points before they return are $\Tilde{a}_1, \dots, \Tilde{a}_{N-1}$, where $\Tilde{a}_i := \Tilde{T}^{-l_i}(\Tilde{\beta}_{J,i})$ for the appropriate backwards branch of $\Tilde{T}$ and $1 \le i \le N-1$;
    \item Each $\Tilde{a}_i^{+}$ has the same itinerary up to time $r_{i+1}$ as before the perturbation, for $0 \le i \le N-1$. Analogously for and $\Tilde{a}_i^{-}$ and $r_i$, for $1 \le i \le N$;
    \item The return time of each interval $[\Tilde{a}_i^+, \Tilde{a}_{i+1}^-)$ is $r_{i+1}$ (where we use the convention that $\Tilde{a}_0^+ = \Tilde{x}$ and $\Tilde{a}_{N}^- = \Tilde{y}$);
    \item The order in $\Tilde{J}$ in which the intervals return is given by the same permutation $\tau = (j_1, j_2, \dots, j_{N})$ as for $J$.
\end{enumerate}
\end{proposition}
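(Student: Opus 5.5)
The plan is to combine the persistence of finite itineraries (established just before Lemma \ref{lem:not-fixed}) with the topological information supplied by stability (Lemma \ref{lem:conseq-stab}). The strategy is to first show that the candidate objects persist, and then use stability to rule out any extra landings.

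\textbf{Step 1: persistence of the finite pieces.} By A1, each $a_i$ lands on exactly one critical point $\beta_{J,i}$ before returning to $J$. By A2, the boundary points $x,y$ land on no critical point before returning. So the following finite orbit segments avoid discontinuities:
\begin{itemize}
\item the backward branch from $\beta_{J,i}$ to $a_i$, of length $l_i$;
\item the forward orbits of $\beta_{J,i}^\pm$ from time $0$ up to the return times $r_{i+1}-l_i$ and $r_i-l_i$;
\item the orbits of $x^+, y^-$ up to their return times;
\item the orbits of $\beta_{J,\tau(1)-1}^+$ and $\beta_{J,\tau(N)}^-$ up to times $m_1$, $m_2$ (they avoid discontinuities there, otherwise $x$ or $y$ would be precritical in a way contradicting A1/A2 via $T|_X$ bijective).
\end{itemize}
By the argument with the vectors $v^{i,\pm}$, all these itineraries persist for $\tilde T$ close to $T$. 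Hence the points $\tilde a_i=\tilde T^{-l_i}(\tilde\beta_{J,i})$ are well defined and close to $a_i$. The same holds for $\tilde x':=\tilde T^{m_1}(\tilde\beta^+_{J,\tau(1)-1})$ and $\tilde y'$. Moreover, the intervals $[\tilde a_i^+,\tilde a_{i+1}^-)$, shrunk slightly, keep the itineraries of $J_{i+1}$ up to time $r_{i+1}$. This gives (3), and the candidate return times in (4). Since the translation factors depend continuously on the parameters, the images $\tilde T^{r_{i+1}}([\tilde a_i,\tilde a_{i+1}))$ are close to $R_J(J_{i+1})$. Being a small perturbation of an exact tiling, they nearly tile, in the order $\tau$. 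This would give (5), once we know these images land in $\tilde J$.

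\textbf{Step 2: identify $\tilde J$ (item (1)).} Lemma \ref{lem:conseq-stab} gives a continuation $\tilde J\subset\tilde X$ that is Hausdorff close to $J$ and contains $\tilde\beta$ for each discontinuity $\beta$ in $J$. I would show $\tilde J=[\tilde x',\tilde y')$ in two directions.
\begin{itemize}
\item \emph{$\tilde J\subseteq[\tilde x',\tilde y')$.} Since $\overline{\tilde X}$ is homeomorphic to $\overline X$ and the number of discontinuities in $\tilde X$ is constant, the boundary points of $\tilde J$ are, by Lemma \ref{lem:J-dynamics}(a), iterates $\tilde T^{k}(\tilde\beta^{\pm})$ of critical points in $\tilde X$. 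A point of $\tilde J$ near $x$ has a backward orbit near the backward orbit of $x$. The only way the left endpoint can appear at $\tilde x'$ is through the persistent branch, because the backward branch defining $x$ is isolated from discontinuities except at its start.
\item \emph{$\tilde J\supseteq[\tilde x',\tilde y')$.} By Step 1, $\tilde x'$ and $\tilde y'$ are images of critical points lying in $\tilde X$.
\end{itemize}

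\textbf{Step 3: no extra landings (item (2)) — the main obstacle.} We must exclude a new point $\tilde b\in\tilde J$, far from all the $\tilde a_i$, that lands on a discontinuity before returning. Away from the $\tilde a_i$, the orbits up to the return time stay a definite distance from discontinuities, so itineraries are constant there. The delicate region is near $x$, $y$ and the $a_i$, where an orbit could graze a different discontinuity before returning. The plan is to use that the images from Step 1 have total length $|\tilde J|$ and cover $[\tilde x',\tilde y')$ up to errors. Since $R_{\tilde J}$ is a bijection (Lemma \ref{lem:rj-facts}), any extra cut would force a subinterval to return at a different time. That would alter either the tiling or the homeomorphism type, or push a discontinuity in or out of $\tilde X$, violating stability condition (2). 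Lemma \ref{lem:not-fixed} excludes the degenerate case where $J_1$ or $J_N$ returns to itself, where such a cut could otherwise hide at the boundary. I expect making this covering and counting argument rigorous near the endpoints to be the hardest part.
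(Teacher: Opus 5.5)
Your Step 3, which you flag as the main obstacle, is a genuine gap: the covering/counting sketch does not prove item (2). Suppose an extra cut in $\tilde J$ comes from a landing on the continuation of a discontinuity already lying in $X$. Such a cut changes neither the homeomorphism type of $\overline{X}(\tilde T)$ nor the number of discontinuities in it. Saying it would ``alter the tiling'' is circular, because the combinatorics of the return map is exactly what (2) asserts.

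The missing idea is elementary and makes Step 3 unnecessary. For a piecewise translation, if two signed points $p<q$ have the same itinerary up to time $n$, then so does every point between them. By induction on $k$, $\tilde T^k$ translates the segment from $p$ to $q$ by a single amount, and its endpoints lie in one convex piece $I_s$. So every intermediate point lies strictly between the two discontinuities bounding $I_s$.

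You already have (3): $\tilde a_i^+$ and $\tilde a_{i+1}^-$ both follow the itinerary of $J_{i+1}$ up to time $r_{i+1}$. Hence the \emph{whole} interval $[\tilde a_i^+,\tilde a_{i+1}^-)$, not a slightly shrunk one, has this itinerary, and none of its interior points lands on a discontinuity. This is also what (4) needs. Early returns are excluded as follows. For $0<k<r_{i+1}$ the interval $T^k(J_{i+1})\subset X$ is disjoint from the component $J$ and cannot abut it without merging with it. So it lies at positive distance from $J$, and this persists for $\tilde T$. This is exactly how the paper gets (2), and the grazing near $x$, $y$, $a_i$ that you worry about cannot occur.

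Your Step 2 also does not prove $\tilde J\subseteq[\tilde x',\tilde y')$; the reverse inclusion via Lemma \ref{lem:conseq-stab} is fine. The left endpoint of $\tilde J$ is some $\tilde T^{k}(\tilde\beta^+)$ with no a priori bound on $k$. Moreover, the principle that a point of $\tilde J$ near $x$ has a backward orbit near that of $x$ is not available for a non-injective map.

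The paper uses A2 and Lemma \ref{lem:not-fixed} precisely here, not for a boundary degeneracy in Step 3:
\begin{enumerate}
\item By A2, a neighbourhood of $x$ of definite size keeps the itinerary of $J_1$ up to time $r_1$, so for small perturbations it contains both $\tilde x$ and $\tilde x'$.
\item Since $R_J(x)\neq x$ (Lemma \ref{lem:not-fixed}), this neighbourhood is carried at time $r_1$ into the interior of $[\tilde x',\tilde y')$.
\item On $[\tilde x',\tilde y')$ the first return map is already a bijection, so points of $[\tilde x,\tilde x')$ cannot belong to $\overline{X}(\tilde T)$.
\end{enumerate}
This forces $\tilde x=\tilde x'$, and symmetrically $\tilde y=\tilde y'$.
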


How small the perturbation $\Tilde{T}$ needs to be will be clear from the proof.

\begin{proof}
Let $\Tilde{T}$ be sufficiently small, as discussed at the beginning of subsection \ref{subsec:s+a1+a2->m}, so that the backward $T^{-1}_{\vert X}$ itinerary of $\beta_{J,i}$ until time $l_i$ remains the same for every $1 \le i \le N$. We may also assume $\Tilde{T}$ is sufficiently small so that the itinerary of every $\beta_{J,i}^{+}$ up to time $r_{i+1}$, and $\beta_{J,i}^{-}$ up to time $r_{i}$ remains the same after perturbation. Thus the points $\Tilde{a}_i := \Tilde{T}^{-l_i}$ are well-defined, contained in $\Tilde{J}$ and the itinerary of every $\Tilde{a}_i^{+}$, resp. $\Tilde{a}_i^{-}$, for $1 \le i \le N$ remains unchanged until time $r_{i+1}$, resp. $r_i$, when it lands back into $\Tilde{J}$. 
Analogously, by the A2 assumption, we may assume that the perturbation $\Tilde{T}$ is sufficiently small so that the points $\Tilde{a}_0^+ := \Tilde{T}^{-m_1}(\Tilde{\beta}^+_{J,0})$ and $\Tilde{a}_N^- := \Tilde{T}^{-m_2}(\Tilde{\beta}^-_{J,N})$ are well defined, and that their itineraries remain the same up to times $r_1$ and $r_N$, respectively. Moreover, by A2, the same holds for small intervals of constant length containing these points. In particular, it holds for the boundary points $\Tilde{x}^+$ and $\Tilde{y}^-$ of $\Tilde{J}$. Thus we have shown that the interval $[\Tilde{T}^{m_1}(\Tilde{\beta}_{J,\tau(1)-1}^+),\Tilde{T}^{m_2}(\Tilde{\beta}_{J,\tau(N)}^-))$ has a well-defined first return map, so it is contained in $\Tilde{J}$. By Lemma \ref{lem:rj-facts}, this return map is a bijection. Thus if we had $\Tilde{x}^+ \neq \Tilde{T}^{m_1}(\Tilde{\beta}_{J,\tau(1)-1}^+)$, the interval between these two points is not contained in $[\Tilde{T}^{m_1}(\Tilde{\beta}_{J,\tau(1)-1}^+),\Tilde{T}^{m_2}(\Tilde{\beta}_{J,\tau(N)}^-))$, but maps into the interior of this interval by Lemma \ref{lem:not-fixed}, and therefore it would not be contained in $\overline{X}$. Analogously for $\Tilde{y}^-$ and $\Tilde{T}^{-m_2}(\Tilde{\beta}^-_{J,N})$, and therefore we have that $\Tilde{J} = [\Tilde{T}^{m_1}(\Tilde{\beta}_{J,\tau(1)-1}^+),\Tilde{T}^{m_2}(\Tilde{\beta}_{J,\tau(N)}^-))$, so property $1.$ follows.

Because the entire interval of points $[\Tilde{a}_i^+, \Tilde{a}_{i+1}^-)$ has the same itinerary up to time $r_{i+1}$, for all $0 \le i < N$, there are no other points in $\Tilde{J}$ that land on discontinuities before returning to $\Tilde{J}$, so property $2.$ follows. Properties $3.$, $4.$ and $5.$ now easily follow from the previous paragraph.
\end{proof}

Proposition \ref{prop:stab-return-map} clearly implies that the equations for the return map still hold after perturbation:

\begin{corollary}
\label{cor:stab-eq}
Let $T$ and $J$ be as in Proposition \ref{prop:stab-return-map}. Then for any sufficiently small perturbation of $T$, all of the following equations still hold:
\[
\begin{array}{cc}
\label{eqn:pert-RJeq}
     &  \Tilde{R}_J(\Tilde{a}_{i_1}^-) \sim \Tilde{R}_J(\Tilde{a}_{i_2-1}^+) \\
     &  \Tilde{R}_J(\Tilde{a}_{i_2}^-) \sim \Tilde{R}_J(\Tilde{a}_{i_3-1}^+) \\
     & \dots \\
     & \Tilde{R}_J(\Tilde{a}_{i_{N-1}}^-) \sim \Tilde{R}_J(\Tilde{a}_{i_{N}-1}^+).
\end{array}
\] \qed
\end{corollary}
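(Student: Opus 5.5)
The plan is to read Corollary~\ref{cor:stab-eq} off directly from Proposition~\ref{prop:stab-return-map}, with no new perturbative input. Fix a sufficiently small perturbation $\Tilde{T}$ as in that proposition. Its items (3)--(5) say three things about the perturbed return map $\Tilde{R}_J$ on $\Tilde{J}$: its continuity intervals are exactly $\Tilde{J}_i=[\Tilde{a}_{i-1}^+,\Tilde{a}_i^-)$, each has return time $r_i$ with the same itinerary as before, and the images $\Tilde{R}_J(\Tilde{J}_i)$ appear in $\Tilde{J}$ in the same order $\tau=(i_1,\dots,i_N)$.

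The key point is that $\Tilde{R}_J\colon\Tilde{J}\to\Tilde{J}$ is a bijection by Lemma~\ref{lem:rj-facts} applied to $\Tilde{T}$. For this we need $\Tilde{J}$ to be an interval component of $\Tilde{X}$, which holds by Lemma~\ref{lem:conseq-stab}, since $T$ is stable. On each $\Tilde{J}_i$ the map $\Tilde{R}_J$ is a single translation, so $\Tilde{R}_J(\Tilde{J}_i)$ is a half-open interval $[\Tilde{R}_J(\Tilde{a}_{i-1}^+),\Tilde{R}_J(\Tilde{a}_i^-))$.

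These $N$ half-open intervals are pairwise disjoint, because $\Tilde{R}_J$ is injective. Their union is all of $\Tilde{J}$, because $\Tilde{R}_J$ is surjective. Listing them in the order given by $\tau$, the right endpoint of the $k$-th image must therefore coincide with the left endpoint of the $(k+1)$-th: any gap would contradict surjectivity, and any overlap would contradict injectivity. Hence, for $1\le k<N$,
\[
\Tilde{R}_J(\Tilde{a}_{i_k}^-)\sim\Tilde{R}_J(\Tilde{a}_{i_{k+1}-1}^+).
\]
These are exactly the displayed equations, now with $\Tilde{J}$ in place of $J$.

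The only point needing care is that the objects $\Tilde{a}_i$, and the labelling of the continuity intervals, are the continuations given by Proposition~\ref{prop:stab-return-map}(2). This ensures the index pattern $i_k$, $i_{k+1}-1$ is literally the same as for $T$. Since this is precisely what item (5) guarantees, I do not expect any real obstacle; the corollary is essentially a restatement of Proposition~\ref{prop:stab-return-map} combined with the bijectivity of return maps.
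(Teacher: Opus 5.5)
Your proposal is correct and matches the paper. The paper gives no separate proof: it says Proposition~\ref{prop:stab-return-map} ``clearly implies'' the corollary, and your argument fills this in the intended way — items (2)--(5) fix the continuity intervals, return times and return order $\tau$ of $\Tilde{R}_J$, and bijectivity of $\Tilde{R}_J$ (Lemma~\ref{lem:rj-facts}) forces consecutive images to touch, exactly as for the unperturbed equations.
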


With Proposition \ref{prop:stab-return-map} and Corollary \ref{cor:stab-eq}, we are ready to prove that any dynamically non-trivial interval $J$ of $X$ contains exactly one discontinuity.

\begin{proposition}
\label{prop:exactly-1}
Let $T$ be a stable map that satisfies the A1 and A2 conditions, and let $J$ be a dynamically non-trivial interval of $X$. Then the return map $R_J$ to $J$ has exactly one discontinuity.
\end{proposition}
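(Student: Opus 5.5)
We argue by contradiction, following the outline in Subsection \ref{subsec:acc+m=stab-pf-overview}. Suppose $J$ is dynamically non-trivial and the return map has $N \ge 3$ continuity intervals. The case $N=1$ cannot occur: it would force $R_J$ to be a translation of $J$ onto itself, hence the identity, and $J$ would be dynamically trivial. So Matching amounts to ruling out $N \ge 3$.

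By Proposition \ref{prop:stab-return-map} and Corollary \ref{cor:stab-eq}, for every sufficiently small perturbation $\Tilde{T}$ the itineraries of all $\Tilde{a}_i^\pm$ up to their return times are unchanged, the permutation $\tau$ is unchanged, and the $N-1$ touching equations at the critical values of $\Tilde{R}_J$ still hold. Since A1 holds, each $a_i$ lands on exactly one discontinuity $\beta_{J,i}$, so there are no critical connection vectors $C^\pm$. Since A2 holds, the boundary points do not land on discontinuities, so we are in the reduced version of Lemma \ref{lem:lin-dep-rj} with only $L_1,\dots,L_{N-1}$ and $R_0^+,R_1^\pm,\dots,R_{N-1}^\pm,R_N^-$.

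Each touching equation is expressed via \eqref{eq:R-vec-property} in terms of the return vectors. For $0\le j<N-1$ it reads
\[
\langle R^-_{i_{j+1}} - R^+_{i_{j+2}-1}, (\gamma\,\beta)\rangle = 0 .
\]
When an endpoint index is $0$ or $N$, the corresponding term involves $R_0^+$ or $R_N^-$ plus the boundary point $x$ or $y$. We then express $x$ and $y$ through landing vectors: $R_J(a_{i_1-1}^+) = x$ and $R_J(a_{i_N}^-)=y$, using $\langle L_j,(\gamma\,\beta)\rangle = -a_j$. Because the identity holds on an open set of parameters, the corresponding coefficient vector in $W(r)$ must vanish identically. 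This is exactly the "change one parameter at a time" argument from the overview. The result is a linear relation among the dynamical vectors of $J$ whose coefficients are $\pm1$ on the paired $R^-_{i_{j+1}}$, $R^+_{i_{j+2}-1}$ and $0$ elsewhere, with $x,y$ eliminated by combining two equations.

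The key step is to feed these relations into Lemma \ref{lem:lin-dep-rj}. That lemma forces $\alpha_j^+ = -\alpha_{j+1}^-$ for every $0 \le j < N$, so a nonzero coefficient on $R^-_{i_{j+1}}$ must be matched by the opposite coefficient on $R^+_{i_{j+1}-1}$. In our relation, however, $R^-_{i_{j+1}}$ is paired with $R^+_{i_{j+2}-1}$. The lemma's pattern is respected only if $i_{j+2}-1 = i_{j+1}-1$, which is impossible, or if the coefficients vanish, contradicting that the relation is nontrivial.

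I expect this matching of index patterns to be the main obstacle. Individual equations can look consistent, so the approach I would try first is to sum all $N-1$ touching relations. This should produce a single relation in which each $R^\pm$ appears with coefficient in $\{0,\pm1\}$. Lemma \ref{lem:lin-dep-rj} then forces a common value of $\alpha_j^+$ along consecutive $j$, which propagates through the cycle defined by $\tau$. For $N\ge3$ the permutation $\tau$ is not the rotation pattern, since a rotation has only one critical value, and this propagation breaks, yielding the desired contradiction. Lemma \ref{lem:not-fixed} is used to exclude degenerate $\tau$ with $i_1=1$ or $i_N=N$, where a boundary term would cancel.
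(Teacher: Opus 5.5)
Your underlying mechanism matches the paper's. By Proposition~\ref{prop:stab-return-map} the vectors $L_j$, $R_j^{\pm}$ do not change under small perturbations, and Corollary~\ref{cor:stab-eq} preserves each touching equation. Each equation therefore becomes an identity in $W(r)$; your open-set argument is a tidy substitute for the paper's one-parameter-at-a-time perturbations. That identity is then played off against Lemma~\ref{lem:lin-dep-rj}. Your two-term check is correct for an equation $R_J(a_p^-)\sim R_J(a_q^+)$ with $1\le p,q\le N-1$: the relation $R_p^- - R_q^+=0$ forces $\alpha_{p-1}^+=-1$, while $q\neq p-1$ because $\tau$ is injective. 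This handles in one stroke both subcases the paper separates: $p\neq q$, done there by moving $\beta_{J,p}$ alone, and $p=q$.

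\textbf{The gap.} You then set single equations aside and propose summing all $N-1$ of them, and this provably gives nothing. After eliminating $x=R_J(a^+_{i_1-1})$ and $y=R_J(a^-_{i_N})$, the sum is $\sum_{k=1}^{N}R_k^- - \sum_{k=0}^{N-1}R_k^+$, which is the zero vector of $W(r)$ for every map. Indeed, under A1 and A2 one has $R_k^-+L_k=v_k=R_{k-1}^++L_{k-1}$ (with $L_0=L_N=0$), where $v_k$ is the translation vector of $J_k$, so the landing vectors cancel in the sum. Geometrically this is just $\sum_k|R_J(J_k)|=|J|$. Its coefficients $\alpha_j^-=1$, $\alpha_j^+=-1$, $\alpha_j=0$ satisfy every equality of Lemma~\ref{lem:lin-dep-rj}, so no propagation around $\tau$ breaks. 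Your supporting claim that $\tau$ is not a rotation pattern when $N\ge 3$ is also false. For example, $\tau=(2,3,1)$ is a rotation in which $a_2$ lands on a discontinuity although $R_J$ is continuous at $a_2$, and ruling out such fake breaks is exactly what the lemma is needed for.

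\textbf{The missing idea} is to choose one informative equation, which is how the paper proceeds.
\begin{itemize}
\item The point $y$ occurs in at most one touching equation and $x$ in at most one. So for $N\ge 4$ some equation avoids both, and your two-term argument finishes.
\item For $N=3$, Lemma~\ref{lem:not-fixed} gives $i_1\neq 1$ and $i_3\neq 3$, leaving $\tau\in\{(2,3,1),(3,1,2),(3,2,1)\}$. The first two again contain a boundary-free equation.
\item For $(3,2,1)$, take the equation $R_J(y^-)\sim R_J(a_1^+)$ with $y=R_J(a_1^-)$. It gives $R_3^-+R_1^- -R_1^+=0$, where $\alpha_3^-=1$ would force $\alpha_2^+=-1$. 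The paper phrases this case as $v_1+v_3=v_2$.
\end{itemize}
For equations involving $x$ or $y$, your one-line check is not sufficient in general. If $i_2=1$ and $i_1\neq N$, the equation $R_J(a_{i_1}^-)\sim R_J(x^+)$ becomes $R^-_{i_1}-R_0^+-R^+_{i_1-1}=0$. Here $R^-_{i_1}$ is matched by the term $R^+_{i_1-1}$ produced by eliminating $x$, and the contradiction only comes from the further equality $\alpha_0^+=-\alpha_1^-$ together with $i_1\neq 1$. This is presumably why individual equations looked consistent to you. The remedy is to pick the right equation, or check all forced equalities, not to sum.
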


\begin{proof}
Assume first that the return map has $3$ or more discontinuities. Then one of the equations above:
\[
\Tilde{R}_J(\Tilde{a}_{i_j}^-) \sim \Tilde{R}_J(\Tilde{a}_{i_{j+1}-1}^+)
\]
is such that neither $\Tilde{a}_{i_j}^-$ nor $\Tilde{a}_{i_{j+1}-1}^+$ are boundary points of $\Tilde{J}$. Thus we have that:
\[
\Tilde{a}_{i_j}^- + \sum_{s=1}^r k_s(a_{i_j}^-, r_{i_j}) \Tilde{\gamma}_s = \Tilde{a}_{i_{j+1}-1}^+ + \sum_{s=1}^r k_s(a_{i_{j+1}-1}^+, r_{i_{j+1}}) \Tilde{\gamma}_s
\]
for any sufficiently small perturbation $\delta$. Let us first assume that the discontinuities $\Tilde{a}_{i_j}^-$ and $\Tilde{a}_{i_{j+1}-1}^+$ are different, i.e.\ that $i_j \neq i_{j+1}-1$.

Now, let the perturbation $\Tilde{T}$ be such that we only change $\beta_{J,i_j}$ to $\Tilde{\beta}_{J,i_j} \coloneq \beta_{i_j} + \epsilon$, where $\epsilon$ is sufficiently small so that Proposition \ref{prop:stab-return-map} applies. By Corollary \ref{cor:stab-eq}, we have that:
\begin{align*}
\Tilde{a}_{i_j}^- + \sum_{s=1}^r k_s(a_{i_j}^-, r_{i_j}) \Tilde{\gamma}_s &= \Tilde{a}_{i_{j+1}-1}^+ + \sum_s k_s(a_{i_{j+1}-1}^+, r_{i_{j+1}}) \Tilde{\gamma}_s \\
\implies \epsilon &= 0,
\end{align*}
as the only value that changes is $a_{i_j}^-$, since $\Tilde{a}_{i_j}^- := \Tilde{T}^{-l_{i_j}}(\Tilde{\beta}_{J,i_j}^-)$ by definition. This is clearly a contradiction, so we may assume $i_j = i_{j+1}-1$. Then if we make a perturbation that only changes some $\gamma_t$ to $\Tilde{\gamma}_t := \gamma_t + \epsilon$, we get:
\begin{align*}
&\epsilon\left(-k_t(a_{i_j}^-, l_{i_j}) + k_t(\beta_{J,i_j}^-, r_{i_j} - l_{i_j})\right) = \\
&\epsilon\left(-k_t(a_{i_{j}}^+, l_{i_{j}}) + k_t(\beta_{J,i_{j}}^+, r_{i_{j}+1} - l_{i_{j}})\right),
\end{align*}
by Corollary \ref{cor:stab-eq}. Since the above holds for all $\epsilon$ and $t$, we get:
\[
k_t(\beta_{J,i_j}^-, r_{i_j} - l_{i_j}) = k_t(\beta_{J,i_{j}}^+, r_{i_{j}+1} - l_{i_{j}}),
\]
since $k_t(a_{i_j}^-, l_{i_j}) = k_t(a_{i_{j}}^+, l_{i_{j}})$, as these points have the same itinerary up to the landing time $l_{i_j}$ to $\beta_{J,i_{j}}$. Since this holds for all $t$, the return vectors $v_{i_j}$ and $v_{i_j+1}$, defined as:
\begin{align*}
v_{i_j} &:= \left(\sum_{1 \le s \le r} k_s(a_{i_j-1}^+, r_{i_j}) \, \bm{e}_s, \,  0\right) \\
v_{i_j+1} &:= \left(\sum_{1 \le s \le r} k_s(a_{i_j}^+, r_{i_j+1}) \, \bm{e}_s, \,  0\right)
\end{align*}
of $J_{i_j}$ and $J_{i_j+1}$ are equal. We claim that this is a contradiction with the Linear Dependence of Return Map Vectors Lemma \ref{lem:lin-dep-rj}. Indeed, by construction:
\begin{align*}
v_{i_j} &= L_{i_j-1} + \sum_{k-1}^{m_{i_j-1}-1}C^{+}(i_j-1,k) + R_{i_j-1}^{+} \\
v_{i_j+1} &= L_{i_j} + \sum_{k-1}^{m_{i_j}}C^{+}(i_j,k) + R_{i_j}^{+}.
\end{align*}
Thus by setting $\alpha_{i_j-1} = \alpha^{+}(i_j-1,k) = \alpha_{i_j-1}^{+} = 1$ and $\alpha_{i_j} = \alpha^{+}(i_j,k) = \alpha_{i_j}^{+} = -1$, and all of the other coefficients to zero, we get a linear dependence between return map vectors that is different than the one forced by Lemma \ref{lem:lin-dep-rj}, which is a contradiction. Thus $R_J$ does not have more than two discontinuities.

Assume now that $R_J$ has exactly two discontinuities. By Lemma \ref{lem:not-fixed}, $\sigma_J = (3 2 1)$. The above equations now reduce to:
\begin{align*}
&a_1^+ + \sum_{s=1}^r k_s(a_1^+, r_2)\gamma_s = y + \sum_{s=1}^r k_s(y, r_3)\gamma_s \\
&a_2^- + \sum_{s=1}^r k_s(a_2^-, r_2)\gamma_s = x + \sum_{s=1}^r k_s(x, r_1)\gamma_s,
\end{align*}
where $x = T^{r_3-l_2}(\beta_{J,2}^+)$ and $y = T^{r_1-l_1}(\beta_{J,1}^-)$. Thus by doing the same perturbations as above, we get that:
\begin{align*}
-k_t(a_1^+, l_1) + k_t(\beta_{J,1}^+, r_2-l_1) &= k_t(\beta_{J,1}^-, r_3 + r_1 - l_1) \\
-k_t(a_2^-, l_2) + k_t(\beta_{J,2}^-, r_2-l_2) &= k_t(\beta_{J,2}^+, r_1 + r_3 - l_2),
\end{align*}
for all $1 \le t \le r$. If we define the vectors $v_1, v_2$ and $v_3$ as above, this implies $v_1 + v_3 = v_2$. Then by setting $\alpha_{0}^{+} = 1$, $\alpha_{1} = \alpha^{+}(1,k) = \alpha_{1}^{+} = -2$ and $\alpha_{2} = \alpha^{+}(2,k) = \alpha_{2}^{+} = 1$ we again get a different linear dependence than the one forced by Lemma \ref{lem:lin-dep-rj}, which is a contradiction.

Thus, it is impossible for $J$ to contain more than one discontinuity. As it is dynamically non-trivial, it must contain exactly one discontinuity.
\end{proof}

Proposition \ref{prop:exactly-1} tells us that every dynamically non-trivial interval $J$ contains exactly one point that lands on a discontinuity before returning $J$, which implies Matching. Thus we have shown the following:

\begin{theorem}[Stability + A1 + A2 $\implies$ Matching]
\label{thm:s+A1+A2=M}
A stable map that satisfies properties A1 and A2 also satisfies the Matching property. \qed
\end{theorem}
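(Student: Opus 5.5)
The theorem is essentially a repackaging of Proposition \ref{prop:exactly-1}, so the plan is to check that the conclusion of that proposition is precisely the Matching condition of Definition \ref{def:matching}.

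First, a stable map is of finite type by Lemma \ref{lem:stable-fin-type}. Hence the notion of interval components of $X$ and the Matching condition make sense for $T$. This also gives that $T|_X$ is a bijection, which Lemma \ref{lem:not-fixed} uses.

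Next, fix a dynamically non-trivial interval component $J$ of $X$. By Lemma \ref{lem:rj-facts}, the interior points of $J$ that land on a critical point before returning are the finitely many points $a_1,\dots,a_{N-1}$. These are exactly the discontinuities of the return map $R_J$. The hypotheses of Proposition \ref{prop:exactly-1} are stability, A1 and A2, and they hold here, so that proposition gives $N-1=1$. In other words, exactly one interior point $a$ lands on a critical point before returning to $J$, which is the Matching condition for $J$.

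Since $J$ was arbitrary, $T$ satisfies Matching. For completeness one should note why $N-1\ge 1$: if $N=1$, then $R_J$ is a single translation of $J$ onto itself, and since it is a bijection of $J$ it must be the identity. That would make $J$ dynamically trivial, contrary to our choice. Proposition \ref{prop:exactly-1} already records this, so there is no genuine obstacle at this step; all the substantial work lies in Propositions \ref{prop:stab-return-map} and \ref{prop:exactly-1}.
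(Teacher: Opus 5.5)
Your proposal is correct and matches the paper, which also obtains the theorem immediately from Proposition~\ref{prop:exactly-1}: it reads that proposition as ``exactly one interior point of each dynamically non-trivial $J$ lands on a critical point before returning,'' which is the Matching condition. Your extra checks, namely finite type via Lemma~\ref{lem:stable-fin-type} (so that Matching is defined) and $N\ge 2$ for non-trivial $J$, are correct and simply make explicit what the paper leaves implicit.
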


\subsection{Stability implies ACC}
\label{subsec:s->acc}

Because of Theorem \ref{thm:s+A1+A2=M}, we now only need to prove that stability implies ACC to complete the first implication of Theorem \ref{thm:stability=acc+m}. Our proof will be indirect: we will first perturb the stable map $T$ to a stable map $\tilde T$ that satisfies ACC, and therefore Matching, and use this property of $\tilde T$ to conclude that $T$ also satisfies ACC. 

We now recall the definition of rational independence:

\begin{definition}[Rational independence]
Let $\{ s_1, s_2, \dots, s_n \}$ be a finite set of real numbers. These numbers are said to be rationally independent if the following holds:
\[
\left( \sum_i q_i s_i = 0, \; \text{with} \; q_i \in \mathbb{Q} \right) \implies \left( q_i = 0 \; \forall i \right).
\]
\end{definition}

This condition on the set $\{ \beta_1, \dots, \beta_{r-1}, \gamma_1, \dots, \gamma_r \}$ is stronger than ACC. Indeed, if ACC does not hold, i.e.\ if one of A1, A2 or A3 is violated, then we have an equation of the following form:
\[
\beta + \sum_{s=1}^r n_s \gamma_s = \betas,
\]
which implies that the the numbers $\{ \beta_1, \dots, \beta_{r-1}, \gamma_1, \dots, \gamma_r \}$ are rationally dependent.

Rational independence, and therefore ACC, can be achieved by an arbitrarily small perturbation. This follows from the next lemma applied to the set  $\{ \beta_1, \dots, \beta_{r-1}, \gamma_1, \dots, \gamma_r \}$, whose proof is straightforward by induction.

\begin{lemma}
\label{rat-ind}
Let $\{ s_1, s_2, \dots, s_n \}$ be a finite set of real numbers. Then we can make an arbitrarily small perturbation to each of the numbers so that they become rationally independent. \qed
\end{lemma}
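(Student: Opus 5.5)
The plan is induction on $n$, perturbing one number at a time so that each new number lies outside a countable set determined by the others.

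First, the base case. For $n=1$, the set $\{s_1\}$ is rationally independent exactly when $s_1 \neq 0$. If $s_1=0$, replace it by any nonzero number of absolute value less than $\epsilon$.

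Second, the inductive step. Let $\epsilon>0$, and assume we have already perturbed $s_1,\dots,s_{n-1}$ by less than $\epsilon$ each to numbers $s_1',\dots,s_{n-1}'$ that are rationally independent. Consider the $\mathbb{Q}$-linear span
\[
V := \Big\{ \sum_{i<n} q_i s_i' \;:\; q_i \in \mathbb{Q} \Big\}.
\]
This set is countable, since it is the image of the countable set $\mathbb{Q}^{n-1}$. The interval $(s_n-\epsilon, s_n+\epsilon)$ is uncountable, so it contains a point $s_n' \notin V$.

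Third, we check that $s_1',\dots,s_n'$ are rationally independent. Suppose $\sum_{i\le n} q_i s_i' = 0$ with $q_i \in \mathbb{Q}$.
\begin{itemize}
\item If $q_n\neq 0$, then $s_n' = -q_n^{-1}\sum_{i<n} q_i s_i' \in V$, which contradicts the choice of $s_n'$.
\item If $q_n=0$, then $\sum_{i<n} q_i s_i' = 0$. By the inductive hypothesis, all $q_i$ with $i<n$ vanish.
\end{itemize}
Hence all $q_i=0$, and each number has moved by less than $\epsilon$, which completes the induction.

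For the application to $T$, we apply this to $\{\beta_1,\dots,\beta_{r-1},\gamma_1,\dots,\gamma_r\}$. The perturbed parameters must stay in $\ITM(r)$. This is automatic if $T$ lies in the interior of the polytope, because small perturbations then remain inside it. Otherwise, we first move $T$ slightly into the interior, which is possible since $\ITM(r)$ is convex with nonempty interior, and only then perturb.

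I expect no real obstacle. The only points needing care are the countability argument and keeping the perturbed parameters inside $\ITM(r)$.
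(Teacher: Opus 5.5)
Your proof is correct and is the induction the paper has in mind; the paper gives no details beyond saying the proof is straightforward by induction. Choosing $s_n'$ outside the countable $\mathbb{Q}$-span of the already-perturbed numbers is the natural way to carry out the inductive step, and your remark about first moving $T$ into the interior of $\ITM(r)$ correctly handles the application to the parameters $(\gamma\,\beta)$.
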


We first establish two lemmas about maps that satisfy properties A1, A2 and Matching.

\begin{lemma}
\label{lem:indep-orb}
Let $T$ be a finite type map that satisfies properties A1, A2 and Matching. Then the orbit of any point $z \in X$ enters at most one interval component of $X$ that contains a discontinuity of $T$.
\end{lemma}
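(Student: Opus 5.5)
Fix $z \in X$. Since $T$ is of finite type, $X = X_n$ for large $n$, so $X$ is a finite union of half-open intervals and $T|_X \colon X \to X$ is a bijection. Every point of $X$ is therefore either periodic or (when $X$ has a Cantor-free structure, which finite type guarantees) recurrent. Points of $X$ that are not in dynamically non-trivial components lie in dynamically trivial components, i.e.\ maximal periodic intervals, whose orbits contain no discontinuity in their interiors by Definition~\ref{def:max-per-int}. The interesting case is when the orbit of $z$ meets an interval component $J$ containing a discontinuity. The plan is to show that the forward orbit of the whole component $J$, up to its return to $J$, meets no other component containing a discontinuity. Since the orbit of $z$ is contained in the orbit of whatever component it visits, this implies the claim.

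\textbf{Step 1: reduce to components containing discontinuities.} Let $J$ be a component of $X$ containing a discontinuity $\beta$ in its interior. By A2 no boundary point of a component is a discontinuity, so $\beta$ is interior. Moreover $J$ is dynamically non-trivial: the point $\beta$ itself lands on a discontinuity at time $0$. By Matching, there is exactly one point $a \in \operatorname{int} J$ that lands on a discontinuity before returning to $J$. Since $\beta$ is such a point with landing time $0$, we get $a = \beta$. By A1, the orbit of $\beta$ up to its return to $J$ contains no further discontinuity. Hence every other point of $J$ avoids discontinuities entirely up to its return time.

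\textbf{Step 2: the orbit of $J$ up to return meets no other discontinuity.} By Matching, $R_J$ has two continuity intervals $J_1=[x,\beta)$ and $J_2=[\beta,y)$, with return times $r_1$ and $r_2$. The images $T^i(J_1)$ for $0<i<r_1$ and $T^i(J_2)$ for $0<i<r_2$ contain no discontinuity in their interiors, by Step 1. They also contain none at their left endpoints: those endpoints are images of $x^+$ or $\beta^+$, and by A2 and A1 these do not land on a discontinuity before returning.

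Each such image is a subinterval of $X$, hence lies in some component $K$ of $X$. Suppose $K \neq J$ and $K$ contains a discontinuity $\beta'$. Then $K$ has its own Matching point $\beta'$, and by the same reasoning the $K$-orbit through $T^i(J_j)$ continues. Since $T|_X$ is bijective and preserves lengths, the union of the orbit pieces of $J$ up to return covers a $T$-invariant set. That set is a union of components, because the images tile with matching endpoints by the equations at the critical value of $R_J$ and by Lemma~\ref{lem:J-dynamics}. Consequently $K$ is covered by iterates $T^i(J_j)$. Then $\beta' \in K$ must lie in some $T^i(J_j)$ with $0<i<r_j$, either in its interior or at its left endpoint, which contradicts the conclusion above.

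\textbf{Main obstacle.} The delicate point is the covering claim in Step 2: that the orbit of $J$ up to return, $\bigcup_j \bigcup_{i<r_j} T^i(J_j)$, is a finite union of whole components of $X$ rather than only partially filling some $K$. The argument I would try first is this. The set $Y=\bigcup_{j,i} T^i(J_j)$ satisfies $T(Y)=Y$. Because $T|_X$ is a bijection, $X\setminus Y$ is also invariant. If $K$ met both $Y$ and $X\setminus Y$, some boundary point of $Y$ would lie in the interior of $K$. Boundary points of $Y$ are images of $x^\pm$, $y^\pm$, $\beta^\pm$, and by Lemma~\ref{lem:J-dynamics}(b) such a boundary point touches a forward image of another discontinuity. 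That image would be a critical connection entering the orbit of $J$, and I would expect this to contradict A1, A2 or Matching. I anticipate the bookkeeping here, tracking which discontinuity creates each touching pair, to be the hardest part of the proof.
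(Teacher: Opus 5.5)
Your reduction is correct as far as it goes. With $J_1=[x,\beta)$, $J_2=[\beta,y)$ and $Y=\bigcup_{0\le i<r_1}T^i(J_1)\cup\bigcup_{0\le i<r_2}T^i(J_2)$, you rightly get $T(Y)=Y$. A1, A2 and Matching also show that no piece $T^i(J_j)$ with $i\ge 1$ contains a discontinuity in its interior or at its left endpoint.

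\textbf{The gap.} The covering claim, that $Y$ is a union of whole components of $X$, is the entire content of the lemma, and it is not proved. The justification in Step~2 (the pieces ``tile with matching endpoints by the equations at the critical value of $R_J$'') fails. The relation $R_J^2(\beta^-)\sim R_J^2(\beta^+)$ only describes how the two branches fit together inside $J$. It says nothing about whether a component $K\neq J$ is partly filled by pieces of $Y$ and partly by the orbit strip of another component $J_{\betas}$. Your fallback in the last paragraph misfires as well. Lemma~\ref{lem:J-dynamics}(b) produces a touching $T^m(\beta^+)\sim T^l(\betas^-)$ with $\betas\neq\beta$, but a touching of two orbits is not a critical connection in the sense of ACC: no discontinuity lands on a discontinuity. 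It violates none of A1, A2 or Matching, which only constrain orbits of points of a single component up to their return. Two orbit strips glued side by side inside one component is exactly the configuration the lemma has to exclude, so tracking which discontinuity created the touching cannot by itself give a contradiction.

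\textbf{The missing idea.} What you need is the maximality argument that drives the paper's proof.
\begin{itemize}
\item Suppose $a$ is interior to a component $K$, $[a,b)=T^i(J_j)$ with $i\ge 1$, and $(a-\delta,a)\subset X\setminus Y$. Recall that $T(X\setminus Y)=X\setminus Y$ because $T|_X$ is a bijection.
\item Then $a^+$ is an iterate of $x^+$ or $\beta^+$, which by A2 and A1 meets no discontinuity before returning to $J$. So $a^-\sim a^+$ persists, and $(a-\delta,a)$ is translated along with $a$ until that return.
\item At the return, $a^+$ arrives either at $x^+=R_J(\beta^+)$ or at $R_J(x^+)$. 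In the first case $X$ contains points immediately to the left of $x$, contradicting that $J$ is a component. In the second case, the points immediately to the left of $R_J(x)$ lie both in $R_J(J_2)\subset Y$ and in $X\setminus Y$, which is absurd.
\item Right endpoints are handled symmetrically via $y^-=R_J(\beta^-)$ and $R_J(y^-)$.
\end{itemize}

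\textbf{Comparison with the paper.} The paper runs the same mechanism directly, without introducing $Y$. It follows the branch $[\beta^+,y_1)$ of $R_{J_1}$ into the other component $J_2$. Using Matching, A1 and A2 there, it sees that the image sits strictly inside a branch $J'$ of $R_{J_2}$. It then carries the points of $J'$ to the left of the image of $\beta^+$ to points of $X$ just left of $x_1$, contradicting maximality of $J_1$. With the step above supplied, your route is a valid alternative and yields the slightly stronger fact that each orbit strip $Y$ is a union of components of $X$. Without it, the proposal assumes what it needs to prove.
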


\begin{proof}
Assume the contrary, that for some point $z$ we have that $T^{k_1}(z) \in J_1 = [x_1,y_1) \ni \beta$ and $T^{k_2}(z) \in J_2 = [x_2,y_2) \ni \betas$, where $J_1$ and $J_2$ are two disjoint interval components of $X$, and $k_1 < k_2$. Without loss of generality, we may assume that $T^{k_1}(z)$ lands into $J_2$ before returning to $J_1$. Indeed, we can choose $k_1$ to be the last time $z$ is in $J_1$ before landing to $J_2$. Moreover, we may assume that $T^{k_2}(z)$ lands to $J_1$ before returning to $J_2$, by choosing $k_2$ as the last time $T^{k_1}(z)$ is in $J_2$ before returning to $J_1$. We may assume $T^{k_1}(z) \in [\beta^+, y_1)$, with the other case being analogous. The the image of the interval $[\beta^+, y_1)$ at time $k_2 - k_1$ is contained in $J_2$, by our choice of $k_1$ and $k_2$. By Matching, no point except $\beta^+$ in the interior of this interval lands on a discontinuity before returning to $J_1$, $\beta^+$ does not land because of A1 and $y$ does not land because of A2. Thus the image $T^{k_2-k_1}([\beta^+, y_1))$ is compactly contained in the interior of one of the intervals $[x_2, \betas^-), [\betas^+, y_2)$. For simplicity, we denote that interval by $J'$. As $J_2$ is dynamically non-trivial, by Matching and A1 no point in $J'$ can land on discontinuity before returning to $J_2$. By our choice of $k_2$, the interval $T^{k_2-k_1}([\beta^+, y_1))$ lands to $J_1$ before it returns to $J_2$. At this time $\beta^+$ lands on $x$, but the points from $J'$ to the left of $T^{k_2-k_1}(\beta^+)$ land outside of $J$. Since they are contained in $X$, this contradicts the maximality of $J$.
\end{proof}

\begin{lemma}
\label{lem:triv-int-struct}
Assume that a finite type map $T$ satisfies properties A1, A2 and Matching. Then every dynamically trivial interval $J = [x,y)$ of $X$ has the property that $x$ lands only on the discontinuity $\beta^+$ and $y$ lands only on the discontinuity $\beta^-$, for some $\beta \in X$.
\end{lemma}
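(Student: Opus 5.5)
We will argue directly from the structure of $J$ given by Lemma~\ref{lem:J-dynamics}, using that $T|_X$ is a bijection for finite type maps. Let $J=[x,y)$ be dynamically trivial, so that $R_J$ is the identity on $J$. No interior point of $J$ lands on a discontinuity before returning, and the return time is therefore a constant $p$. By Lemma~\ref{lem:J-dynamics}(a) we have $x=T^{k_1}(\beta^+)$ and $y=T^{k_2}(\betas^-)$ for some $\beta^+,\betas^-\in X\cap\mathcal C$.

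\textbf{Step 1: $x$ and $y$ land on these discontinuities.} Since $J$ is periodic with period $p$ and $T|_X$ is bijective, $\beta^+$ lies in the forward orbit of $x$: the backward orbit of $x$ within $X$ coincides with its forward periodic orbit. Hence $x$ lands on $\beta^+$ within time $p$, and similarly $y$ lands on $\betas^-$. We may therefore record the finite list of discontinuities that $x^+$ (respectively $y^-$) meets during one period.

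\textbf{Step 2: uniqueness, using A1.} The orbit of $x^+$ up to time $p$ is the left-end orbit of the periodic interval $J$. If $x^+$ met two discontinuities during one period, this would be a chain of critical connections along the orbit of $J$. Choose a point $a\in J$ together with an iterate time $t$ such that $T^t(a)$ is the relevant discontinuity sitting in the orbit. Consider the interval component $J'$ of $X$ containing $\beta^+$. The orbit of $\beta^+$ (and of the points of $J'$ near it) up to its return to $J'$ then contains two critical points, which contradicts A1 applied to $J'$. The argument for $y^-$ is the same. Here A2 enters through the following dichotomy: $J'$ cannot be dynamically non-trivial with $\beta^+$ as its left boundary. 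Indeed, A2 forbids boundary points of non-trivial components from landing on discontinuities, whereas $\beta^+$ is itself a discontinuity (landing time zero). Therefore $J'$ is either $J$'s orbit-translate, which is trivial, or a non-trivial component containing $\beta$ in its interior.

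\textbf{Step 3: $\betas=\beta$.} This is the key step. Let $J'$ be the component of $X$ in the orbit of $J$ that contains $\beta^+$ as its left endpoint, namely $J'=T^{p-k_1'}(J)$, where $k_1'$ is the landing time. If $\beta^-\in X$, the component to the left of $\beta$ must also belong to $X$. We will show that $\beta^-$ is the right endpoint of an iterate of $J$.

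First case: $\beta^-\notin X$. Then $\beta^+$ is a boundary point of $X$. Because the image $T^{p}$ of a neighbourhood of $x$ on the left is disjoint from $X$, this is consistent only if no point in $X$ approaches $\beta$ from the left.

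Second case: $\beta$ lies in the interior of a non-trivial component $K$. By Matching, the single point of $K$ landing on a discontinuity would then be such that the orbit of $\beta^+$ returns to $K$. But the orbit of $\beta^+$ is periodic and stays in the trivial orbit of $J$, so it never enters a non-trivial component. This contradicts Lemma~\ref{lem:indep-orb}.

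Hence $\beta$ lies in the boundary between two trivial components, or $\beta^-$ lies outside $X$. Since the same analysis at $y$ yields $\betas^-$, we conclude by a counting/pairing argument. Each trivial orbit consumes one $+$ endpoint and one $-$ endpoint, and these must belong to the same discontinuity: otherwise $\beta^-$ would be an endpoint of a component $[\,\cdot\,,\beta^-)$ whose right endpoint $T^{-k}$ is not a discontinuity's iterate, contradicting Lemma~\ref{lem:J-dynamics}(a).

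\textbf{Main obstacle.} The hard step is Step 3, ruling out $\betas\neq\beta$. There one must exclude the possibility that $\beta^-$ belongs to a different component of $X$, or lies outside $X$ altogether. I would first attempt to handle this via Lemma~\ref{lem:J-dynamics}(b) and Lemma~\ref{lem:indep-orb}, and fall back on the pairing/counting argument if that fails.
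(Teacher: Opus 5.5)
Step~3 carries the whole content of the lemma, and it does not work. In your ``second case'' you claim a contradiction when $\beta$ lies in the interior of a non-trivial component $K$, on the grounds that the orbit of $\beta^+$ ``stays in the trivial orbit of $J$'' and never enters a non-trivial component. This is false. The point $\beta^+$ lies in $K$ at time $0$. An iterate $T^t(J)$ of a trivial component is only a subinterval of some component of $X$, and that component need not be trivial. Lemma~\ref{lem:indep-orb} gives no contradiction either, since it only says the orbit meets at most one component containing a discontinuity.

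In fact, once discontinuities on $\partial X$ are excluded, the discontinuity reached by $x$ \emph{must} lie in the interior of the component containing the corresponding iterate of $J$. So this is the only case that can occur, and it is exactly the case from which the conclusion has to be extracted. Your first case and the closing ``counting/pairing'' paragraph contain no actual argument. Step~2 is also needlessly indirect and, as written, incomplete: the orbit of $\beta^+$ may return to $J'$ before reaching the second discontinuity. Simply apply A1 to $J$ itself; since $R_J=\mathrm{id}$, the first return time of $x$ is its full period.

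The missing idea, which is the paper's proof, is to compare \emph{first} landing times. Let $k_1,k_2$ be the first times at which $x^+$ and $y^-$ hit discontinuities, say $\betas^+$ and $\betass^-$. All of $J$ has a single itinerary up to its period.
\begin{itemize}
\item If $k_1=k_2$, then $T^{k_1}(J)=[\betas^+,\betass^-)$. So two distinct discontinuities lie in the interior of one component, contradicting Matching.
\item If, say, $k_1<k_2$, the component $J_1\supset T^{k_1}(J)$ has $\betas$ in its interior, hence is non-trivial. By Matching, A1 and A2, the only discontinuity met by any point of $\overline{J_1}$ before returning to $J_1$ is $\betas$ itself, at time $0$. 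The orbit of $T^{k_1}(y^-)\in\overline{J_1}$ splits into such return segments and must reach $\betass^-$. Hence the first discontinuity on it is $\betas^-$, so $\betass=\betas$.
\end{itemize}

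Note that both this argument and your Step~2 dichotomy use that no discontinuity lies on $\partial X$. The paper takes this from the remark after Definition~\ref{def:acc}. As you rightly noticed, however, A2 only constrains non-trivial components, and the leftover case cannot be argued at all. Take $\beta_1=1/3$, $\beta_2=2/3$, $(\gamma_1,\gamma_2,\gamma_3)=(1/3,0,-1/3)$. Then $T(I_1)=T(I_3)=I_2$ and $T|_{I_2}=\mathrm{id}$, so $X=[1/3,2/3)$ is a single trivial component. A1 holds and A2 and Matching are vacuous, yet $x$ lands only on $\beta_1^+$ and $y$ only on $\beta_2^-$. So your first case must be excluded by an explicit hypothesis (no discontinuities in $\partial X$), not proved.
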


\begin{proof}
Assume first that $x$ lands on $\betas^+$ at the same time $y$ lands on $\betass^-$. This means that $\betas^+$ and $\betass^-$ are contained in the same interval of $X$. Since by A1 and A2 there are no discontinuities in the boundary of $X$, the return map to this interval of $X$ has at least two discontinuities of its return map. This is a contradiction with Matching, so this case is impossible.

Let $T^{k_1}(x) = \betas^+$ and $T^{k_2}(y) = \betass^-$, and assume without loss of generality that $k_1 < k_2$. By A1 and A2, the interval $J_1$ of $X$ containing $T^{k_1}(J)$ is dynamically non-trivial, which by Matching implies that no point in $J_1$, except $\betas$, lands on a discontinuity before returning to $J_1$. But $\betas^+$ and $\betas^-$ also do not land on any other discontinuities because of A1. As $y$ lands on a discontinuity, this forces $\betass^- = \betas^-$.
\end{proof}

We are now ready to prove the following theorem, which alongside Theorem \ref{thm:s+A1+A2=M} proves the first implication of Theorem \ref{thm:stability=acc+m}:

\begin{theorem}
\label{thm:s-acc}
A stable map $T$ satisfies the ACC condition.
\end{theorem}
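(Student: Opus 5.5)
We prove the three parts of ACC separately, following the strategy announced in the overview. By Lemma \ref{lem:stable-fin-type}, $T$ is of finite type. By Lemma \ref{rat-ind}, arbitrarily close to $T$ there is a map $T'$ whose parameters $\{\beta_1,\dots,\beta_{r-1},\gamma_1,\dots,\gamma_r\}$ are rationally independent, so $T'$ satisfies ACC. Taking $T'$ in the neighbourhood $\mathcal{U}$ of stability of $T$, the map $T'$ is itself stable, since a small neighbourhood of $T'$ lies in $\mathcal{U}$. Theorem \ref{thm:s+A1+A2=M} then shows that $T'$ satisfies Matching. So $T'$ satisfies A1, A2 and Matching, and Lemmas \ref{lem:indep-orb} and \ref{lem:triv-int-struct} apply to it.

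\textbf{A1 and A2.} The plan is to transfer the separation structure of $T'$ back to $T$. Suppose A1 or A2 fails for $T$. Then some interval component $J$ of $X(T)$ contains a point $a$ whose orbit, before returning to $J$, meets two critical points $\beta,\betas$ (or meets one critical point, with $a$ a boundary point of $J$).

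By stability (Lemma \ref{lem:conseq-stab}), $J$ has a continuation $J'$ for $T'$. The number of discontinuities inside and outside $\overline{X}$ is preserved, and each continuation of a discontinuity lies in the continuation of its component. Follow the orbit of $J$ for a bounded time, namely at most the maximal return time; this works for $T'$ close enough to $T$. The components of $X(T)$ containing $\beta$ and $\betas$ then continue to components of $X(T')$ containing $\beta'$ and $\betas'$, and a point of $J'$ near $a$ visits both of them.

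If $\beta$ and $\betas$ lie in different components, this contradicts Lemma \ref{lem:indep-orb}. If they lie in the same component $K$, then either they are distinct discontinuities in $K$, so $R_{K'}$ has at least two discontinuities, contradicting Matching; or $\betas=\beta$ is revisited, which would give $K'$ a periodic point landing on $\beta'$ and contradict A1 for $T'$.

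For A2, the boundary point $x$ of $J$ lands on $\beta$ at a time $k$ before returning. Its continuation $x'$ lies within $o(1)$ of $\beta'$ at time $k$. Since $x'$ does not land on $\beta'$, the point $T'^k(x')$ lies strictly on one side of $\beta'$ inside the component $K'$. The interval between $T'^k(x')$ and $\beta'$ is then a short piece of $K'$ that should be in $X(T')$ but is not in the image of $J'$. Making this precise with Lemma \ref{lem:triv-int-struct} and the bijectivity of return maps (Lemma \ref{lem:rj-facts}) is where I expect the most care to be needed. The contradiction should come from the fact that a boundary of $X(T')$ near a discontinuity is forced to coincide with the discontinuity's orbit.

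\textbf{A3.} The plan is to argue by contradiction, using the mechanism of Example \ref{ex:ghost-preimage}. Suppose some $\beta\notin X$ appears in its own ghost tree. Then there is a cycle $\beta=\beta^{(0)}\leftarrow\beta^{(1)}\leftarrow\dots\leftarrow\beta^{(n)}=\beta$ with alternating signs, in which each $\beta^{(i+1)}$ lands on the opposite-signed part of $\beta^{(i)}$. Each of these discontinuities lies outside $X$ and at definite distance from it. This follows from the remark after Definition \ref{def:acc}, once A1 and A2 have been established.

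Apply Lemma \ref{lem:pert-lem}(b) to each of these critical connections, choosing the signs of the differences so that each iterate crosses slightly to the correct side. Concretely, $T^{k_i}(\beta^{(i+1)})$ should land just past $\beta^{(i)}$ into the branch on the side of the opposite sign. After this perturbation, a small one-sided interval adjacent to $\beta$ maps continuously around the whole cycle back into itself, so it becomes periodic and hence lies in $\tilde X$. The resulting $\overline{X}(\tilde T)$ contains points at definite distance from $\overline{X}(T)$, which contradicts Hausdorff continuity.

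The main obstacle is arranging all the crossings simultaneously and consistently with Lemma \ref{lem:pert-lem}(b). One needs to check that the connections are not in $O(J)$, and to handle the case where several edges of the cycle share a discontinuity. I would first try to reduce to a minimal cycle, which should make these checks manageable.
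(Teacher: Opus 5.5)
Your A1 argument is essentially the paper's. You transfer Lemma~\ref{lem:indep-orb} back to $T$ from a nearby stable map $T'$ that satisfies A1 and A2, and hence Matching, using a non-critical point $z$ next to $a$ whose itinerary persists. Your A3 argument, closing a ghost-tree cycle with part (b) of Lemma~\ref{lem:pert-lem} so that a one-sided interval at $\beta$ becomes periodic, is also the paper's. One correction for A3: prescribing only the signs of the crossings is not enough. The offsets accumulate with alternating signs around the cycle, so with unequal sizes the tracked interval can end up on the wrong side of some discontinuity. The paper takes all offsets of the same size $\epsilon$, so that they cancel in pairs.

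The genuine gap is A2, where your sketch does not yield a contradiction, as you yourself acknowledge. Suppose $(T')^k(x')$ falls on the far side of $\beta'$. The piece of $K'$ between $\beta'$ and $(T')^k(x')$ only has to lie in $T'(X(T'))$, not in $(T')^k(J')$. Invariance of $X(T')$ says nothing about which part of $X(T')$ covers that piece, so ``not in the image of $J'$'' is no contradiction.

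The missing idea is to pair the boundary point with an interior cut point, and then run exactly your A1 argument.
\begin{itemize}
\item Since $J$ is dynamically non-trivial, $R_J$ has a cut point $a$ in the interior of $J$. Both endpoints of the continuity interval $[x,a)$ then land on discontinuities before returning.
\item A point $z\in(x,a)$ does not land on a discontinuity before returning, so its itinerary persists for $T'$.
\item Its $T'$-orbit therefore passes near both landing discontinuities, i.e.\ through two components of $X(T')$ containing discontinuities. This contradicts Lemma~\ref{lem:indep-orb} for $T'$.
\end{itemize}
This is how the paper proves A2.

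One case still needs separate treatment. If $x$ and $a$ land on the two sides of the same discontinuity, both visits are to the same component, and Lemma~\ref{lem:indep-orb} alone does not give a contradiction. The paper also passes over this case quickly.
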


\begin{proof}
Let us first derive some simpler properties of $T$. Let $\Tilde{T}$ be a sufficiently small perturbation of $T$ which satisfies properties A1 and A2. Then $\Tilde{T}$ also satisfies the Matching condition, by Theorem \ref{thm:s+A1+A2=M}, and therefore also the assumptions of Lemma \ref{lem:indep-orb} and Lemma \ref{lem:triv-int-struct}. Moreover, every interval component $\Tilde{J}$ of $\Tilde{X}$ contains at most one discontinuity in its interior and no discontinuities in its boundary. Since the number of discontinuities in $\overline{X}(\Tilde{T})$ is constant in the neighbourhood of stability $\mathcal{U}$ of $T$, the continuity of $\overline{X}(\Tilde{T})$ implies the same conclusion for $X$: it also has no discontinuities in the boundary, and every interval component $J$ of $X$ contains at most one discontinuity of $T$, and $J$ contains a discontinuity $\beta$ if and only if $\Tilde{J}$ contains $\Tilde{\beta}$.

Assume now that $T$ does not satisfy A1: some dynamically non-trivial interval $J = [x,y)$ contains a point $a$ which lands on a discontinuity more than once before returning to $J$. By the paragraph above, we know that $a$ must be in the interior of $J$ and that no other point in $J$ lands on a discontinuity before returning to $J$. We may without loss of generality assume that $a^+$ lands on two discontinuities, with the case for $a^-$ being analogous. Thus there are times $k_1 < k_2$ such that $T^{k_1}(a^+) = \betas^+ \in J_1$ and $T^{k_2}(a^+) = \betass^+ \in J_2$, where $J_1 \neq J_2$ are interval components of $X$. Let $z \in [a^+,y)$ not land on a critical point before returning to $J$ and assume that the perturbation is small enough so that $z$ has the same itinerary until it returns to $\Tilde{J}$. By Lemma \ref{lem:indep-orb}, the $\Tilde{T}$-orbit of $z$ lands into at most one interval component of $\Tilde{X}$ that contains a discontinuity, before it returns to $\Tilde{J}$. The same holds for the $T$-orbit of $z$ before returning to $J$, as the $T$ and $\Tilde{T}$ orbits of $z$ pass through corresponding intervals of $X$ and $\Tilde{X}$, respectively. Moreover, this also holds for every point in the continuity interval $[a,y)$ of $R_J$ containing $z$. Thus $a^+$ does not land in both $J_1$ and $J_2$, which is a contradiction. Thus $T$ satisfies A1.  

Next, assume that $T$ violates A2: there is a point $x$ in the boundary of a dynamically non-trivial interval $J = [x,y)$ of $X$ that lands on a discontinuity before returning to $J$ (the case for $y$ is analogous). Let $z$ be a point in the interior of $J$, to the left of $a$, the only discontinuity of $R_J$. In particular, the orbit of $z$ up to return time does not land on any critical point. We may thus assume that the perturbation is small enough so that $z$ has the same itinerary up to return time with respect to the intervals of $X$ and Lemma \ref{lem:indep-orb} holds for $z$ and $\Tilde{T}$. Thus the $T$-orbit of $z$ has the same properties: it enters at most one interval of $X$ that contains a discontinuity of $T$. Therefore this has to hold until return time for the continuity interval $[x,a)$ of $R_J$ that contains $z$. Thus it is impossible that both $x$ and $a$ land on a discontinuity, which is a contradiction. Thus $T$ satisfies A2 as well.

Finally, assume that $T$ violates A3. We will produce arbitrarily small perturbations of $T$ such that the set $\Tilde{X}$ contains more discontinuities than $X$, contradicting stability. If A3 does not hold, then there is some $\beta$ that appears more than once as a vertex in its ghost tree $\mathcal{GT}(\beta)$. Without loss of generality, we may assume that it is of $+$-type and we label it $\beta_{i_1}^+$. Thus we have a finite path in the ghost tree of the following form: $\beta_{i_1}^+ \leftarrow \beta_{i_2}^- \leftarrow \beta_{i_3}^+ \leftarrow \dots \leftarrow \beta_{i_{2k+1}}^+ = \beta_{i_1}^+$. Since there are no discontinuities in the boundary of $X$, no discontinuity in this path is contained in $X$. For even $j$ with $2 \le j \le 2k$, let $k_j$ be the time at which $\beta_{i_j}^+$ lands on $\beta_{{i_{j-1}}}^+$. Define $k_j$ analogously for odd $j$ with $3 \le j \le 2k+1$.

Using part (b) of the Perturbation Lemma \ref{lem:pert-lem}, there exists an arbitrarily small perturbation $\Tilde{T}$ of $T$ such that the following holds. For every landing $\beta_{i_j}^+ \to \beta_{{i_{j-1}}}^+$, for even $j$ with $2 \le j \le 2k$, we set the difference $\Tilde{T}^{k_j} ( \Tilde{\beta}_{i_j}^+) - \Tilde{\beta}_{{i_{j-1}}}^+ = -\epsilon$, where $\epsilon > 0$ is arbitrarily small. Note that if $\beta_{i_j}^+$ lands on other discontinuities before landing to $\beta_{{i_{j-1}}}^+$, we may simply preserve all of these landings, also by part (b) of the Perturbation Lemma \ref{lem:pert-lem}. We may analogously set all the difference $\Tilde{T}^{k_j} ( \Tilde{\beta}_{i_j}^+) - \Tilde{\beta}_{{i_{j-1}}}^+ = \epsilon$ for odd $j$ with $3 \le j \le 2k+1$. By the Perturbation Lemma \ref{lem:pert-lem}, we may preserve all other landings outside of $X$, while the dynamics on $X$ remain unchanged since $T$ is stable and satisfies A1, A2 and Matching. By construction, $\Tilde{\beta}_{i_1}^+$ is periodic with period $\sum_{j=1}^{2k+1} k_j$, and is therefore contained in $\Tilde{X}$, which gives the desired contradiction.
\end{proof}

\subsection{ACC + Matching implies Stability}
\label{subsec:acc+m->stability}
We are now going to prove the other direction in Theorem \ref{thm:stability=acc+m}.

\begin{theorem}
\label{thm:acc+m->stability}
Let $T \in \ITM(r)$ be a finite type map that satisfies the ACC and Matching conditions. Then $T$ is a stable map.
\end{theorem}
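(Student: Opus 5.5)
The plan is to show two things for all $\Tilde{T}$ in a small neighbourhood $\mathcal{U}$ of $T$. First, every interval component of $X$ has a continuation in $\Tilde{X}$ that moves continuously (lower semicontinuity). Second, $\Tilde{X}$ acquires no new pieces and no new discontinuities (upper semicontinuity). Homeomorphism type and the count of discontinuities outside $\overline{X}$ then follow. Since $T$ is of finite type, $X = X_n$ for some $n$, so $X$ is a finite union of half-open intervals. By Matching, A1 and A2, each dynamically non-trivial component $J$ contains a unique point $a = T^{-l}(\beta)$ landing on exactly one discontinuity $\beta$ before returning. Its endpoints satisfy $x^+ = T^{r_2}(a^+)$ and $y^- = T^{r_1}(a^-)$ and never meet a discontinuity before returning. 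Each dynamically trivial component is periodic, and by A1, A2 and the argument of Lemma \ref{lem:triv-int-struct} its interior avoids discontinuities up to the period.

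\textbf{Step 1 (lower semicontinuity).} Following the computation in the overview, I will show that the return-map structure of each component persists. By A1 and A2, the finite itineraries of $\beta^\pm$ up to times $r_1, r_2$ and the backward branch defining $\Tilde{a} = \Tilde{T}^{-l}(\Tilde{\beta})$ persist under small perturbations, using the positivity-of-distances argument at the start of Subsection \ref{subsec:s+a1+a2->m}. So $\Tilde{J} := [\Tilde{T}^{r_2}(\Tilde{a}^+), \Tilde{T}^{r_1}(\Tilde{a}^-))$ is well defined. The identity $R_J^2(a^-) \sim R_J^2(a^+)$ reduces to $0=0$ because the two concatenated itineraries contain the same letters with the same multiplicities, so it holds for $\Tilde{T}$ as well. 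Hence $\Tilde{J}$ is invariant under a bijective two-interval return map, that is, a rotation, and its full orbit, a finite union of intervals, lies in $\Tilde{X}$. Dynamically trivial components persist in the same way, since an open periodic interval whose boundary itineraries are stable stays periodic. So $\Tilde{X}$ contains intervals $\Tilde{J}$ that are Hausdorff close to the components $J$ and contain the same discontinuities. These discontinuities are interior to $J$, by A2.

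\textbf{Step 2 (upper semicontinuity).} This is the main obstacle. Fix a small neighbourhood $V$ of $X$ with $\overline{V}$ disjoint from every discontinuity $\beta \notin X$. I need to show that for $\Tilde{T}$ close to $T$, every point's orbit eventually enters the continuation $\Tilde{V}$ and stays there, so that $\Tilde{X} \subset \bigcup \Tilde{J}$. Then $\Tilde{X}$ equals the union of the continuations and nothing else.

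The approach I would try first is to use the Orbit Classification Lemma \ref{lem:orb-class} together with finite type. Since $X_n = X$, the set $T^n(I)$ lies in $X$. The only obstruction to this persisting is the critical points outside $X$ whose first $n$ iterates hit other discontinuities. Away from those, iterates are stable for time $n$, and $\Tilde{T}^n$ maps them into $\Tilde{V}$, where orbits are trapped near the $\Tilde{J}$. For a discontinuity $\beta \notin X$ whose orbit meets some $\betas$ within the relevant time, a small perturbation may send a one-sided neighbourhood of $\betas$ along the itinerary of the other side. This is exactly the ghost-preimage mechanism. So I would define, for $\Tilde{T}$, the set of "bad" points in $I \setminus \Tilde{V}$: points whose $\Tilde{T}$-orbit stays outside $\Tilde{V}$ for more than $M$ steps. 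Any such orbit must pass repeatedly near discontinuities outside $X$. Following it backward from a near-landing to a near-landing, each near-landing on the wrong side corresponds to an edge $\betas \leftarrow \betass$ in a ghost tree.

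Because there are finitely many discontinuities and the itineraries between them are bounded in time, an orbit avoiding $\Tilde{V}$ forever (and in particular any new component of $\Tilde{X}$, or any discontinuity newly in $\Tilde{X}$) produces a cycle in some ghost tree. That contradicts A3 via Lemma \ref{lem:inf-gt}, and the argument would be a pigeonhole on sequences of visited discontinuities. The delicate part is the bookkeeping. The time $M$ and the size of $\mathcal{U}$ must be chosen uniformly, and one must check that nearby-but-not-landing orbits cannot create extra connections; this follows from the finiteness of critical connections of bounded length. Finally, the discontinuity count in $I \setminus \overline{X}(\Tilde{T})$ is constant, and $\overline{X}(\Tilde{T})$ is homeomorphic to $\overline{X}(T)$ (same number of intervals) and Hausdorff close to it.
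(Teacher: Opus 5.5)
Your overall strategy is sound, and Step 1 is essentially the paper's argument. A1 and A2 keep the itineraries of $a^\pm$, the identity $R_J^2(a^-)\sim R_J^2(a^+)$ reduces to $0=0$, and so $\Tilde J=[\Tilde T^{r_2}(\Tilde a^+),\Tilde T^{r_1}(\Tilde a^-))$ carries a two-interval rotation. Two points in Step 1 need repair.

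\textbf{Trivial components.} Your reason for their persistence is false. Their endpoints do land on $\beta^\pm$ (Lemma \ref{lem:triv-int-struct}). More seriously, periodicity is not stable.
\begin{itemize}
\item A trivial component lies in the orbit of some $J_\beta$, so $R_{J_\beta}$ has rational rotation number.
\item But $|J_2|/|J_1|=-Tr(J_1,r_1)/Tr(J_2,r_2)$ is a ratio of two linear forms in $\gamma$ with non-proportional non-negative coefficient vectors.
\item Hence a generic small perturbation makes this ratio irrational, and then $\sum_s k_s\Tilde\gamma_s\neq 0$.
\end{itemize}
The continuation is still a component of $\Tilde X$, but for the paper's reason. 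The trivial component equals $T^k(J_1)$ or $T^k(J_2)$, and $\Tilde T^k(\Tilde J_i)$ lies in the invariant orbit of $\Tilde J_\beta$ without being periodic.

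\textbf{Maximality of the $\Tilde J$.} You never show that orbits entering $V$ are absorbed into $\bigcup\Tilde O(\Tilde J_\beta)$, so you do not know the $\Tilde J$ are full components. This is needed for the homeomorphism, and to pass from ``every orbit enters $V$'' to $\Tilde X\subset\bigcup\Tilde J$. The paper obtains it by taking $V$ to be the union $X'$ of enlargements $J'\supset J$. On these, by A2, the endpoint itineraries persist up to the return time and land strictly inside $\Tilde J$.

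\textbf{Step 2.} This is a genuinely different route from the paper's, and it can be completed. The paper uses the Orbit Classification Lemma to reduce to orbits of discontinuities. Either every discontinuity outside $X'$ robustly lands in $X'$, or it follows the perturbed orbit of one discontinuity that never does and reads off a chain $\beta_{i_1}^+\leftarrow\beta_{i_2}^-\leftarrow\cdots$. Eventually periodic points are handled separately.

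You instead shadow arbitrary $\Tilde T$-orbits, and the bookkeeping you defer closes because only a bounded horizon is needed. Let $\eta_0$ be the least distance from $T^k(\beta^\pm)$, $k\le n$, to a discontinuity it does not hit.
\begin{itemize}
\item An orbit that stays $\eta$-far from discontinuities for $n$ steps enters $V$.
\item Otherwise it follows the $T$-orbit of some signed $\beta^s$. With error below $\eta_0$, it can leave that itinerary only at an exact connection $T^k(\beta^s)=\betas^s$, by crossing to the side $-s$.
\item Such a crossing is exactly the ghost edge $\betas^{-s}\leftarrow\beta^s$; same-side passages are mere continuations.
\item After at most $2(r-1)$ switches, each within $n$ steps of the last, some signed anchor repeats. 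So $M\approx 2rn$ suffices, with $\eta$ and $M\delta$ small relative to $\eta_0$ and the width of $V$.
\item The anchors lie outside $\overline X$ because the orbit avoids $V$, so a repeat contradicts A3.
\end{itemize}

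Your route gives a uniform time $M'$ with $\Tilde T^{M'}(I)\subset\bigcup\Tilde O(\Tilde J_\beta)$. This yields $\Tilde X=\bigcup\Tilde O(\Tilde J_\beta)$ and finite type of $\Tilde T$ directly, with no Orbit Classification and no separate preperiodic case. The paper's route avoids explicit constants by tracking only finitely many critical orbits.
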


In the proof, we will encounter the following situation. For all sufficiently small perturbations, the itinerary of a point $z$ does not change up to some finite time $n$ when it lands into the interior of a set $S$. Then we may also assume the perturbations are sufficiently small so that there is an interval of positive length containing $z$ whose itinerary up to time $n$ does not change, and which lands into the interior of $S$ at time $n$. If the point $z$ is a discontinuity, then this interval will contain $z$ in its left or right boundary, depending on whether $z$ is of $+$-type or of $-$-type. 

\begin{proof}

We will show that for a sufficiently small perturbation of $T$, $\Tilde{X}$ is close to and homeomorphic to $X$, and contains the same discontinuities as $X$. By definition, this shows that $T$ is stable.

By A1 and A2, there are no discontinuities in the boundary of $X$. Moreover, for any $T$ that satisfies A1, A2 and Matching, every dynamically non-trivial interval $J$ has the property that only a single point $a$ in the interior of $J$ lands on a discontinuity and that the return map to $J$ is a rotation. This gives $X = \bigsqcup_{\beta \in X \cap \, \mathcal{C}} O(J_\beta)$, where $J_{\beta}$ is the dynamically non-trivial interval of $X$ containing $\beta$. The orbits $O(J_{\beta})$ are disjoint and there is a positive distance between them because of Lemma \ref{lem:indep-orb}.

By assumption, every dynamically non-trivial interval $J$ is of the form $[T^{k_2}(a^+), T^{k_1}(a^-))$, where $R_J = T^{k_1}$ on $J_1 = [x,a^-)$ and $R_J = T^{k_2}$ on $J_2 = [a^+,y)$. Moreover, for every such interval $J$ of $X$, there is an interval $J' = [x', y')$ containing $J$ such that:
\begin{itemize}
    \item No point in $[x',\beta^-)$ lands on discontinuity up to time $k_1$ and no point in $[\beta^+, y')$ lands on a discontinuity up to time $k_2$.
    \item $J$ is compactly contained in the interior of $J'$, i.e.\ $x' < x$ and $y' > y$.
\end{itemize}
Indeed, this follows from A2. We may assume that $J'$ is sufficiently small so that for a sufficiently small perturbation of $T$, the itineraries of points in $[x',\Tilde{a}^-)$ up to time $k_1$ and $[\Tilde{a}^+, y')$ remain the same as for $T$. Moreover, we may assume that the perturbation is small enough so that the itineraries of $a^+$ and $a^-$ up to time $k_1 + k_2$ remain the same as for $T$. Thus the interval $[\Tilde{T}^{k_2}(\Tilde{a}^+), \Tilde{T}^{k_1}(\Tilde{a}^-))$ has a well-defined return map on it that is a rotation, so it is in particular contained in $\Tilde{X}$. We will show that it must be an interval of $\Tilde{X}$. For a sufficiently small perturbation, it is compactly contained in the interior of $J'$. This means that the points in $[x',\Tilde{T}^{k_2}(\Tilde{a}^+))$ land into $[\Tilde{T}^{k_2}(\Tilde{a}^+), \Tilde{T}^{k_1}(\Tilde{a}^-))$ at time $k_1$ and points in $[\Tilde{T}^{k_1}(\Tilde{a}^-),y')$ land into $[\Tilde{T}^{k_2}(\Tilde{a}^+), \Tilde{T}^{k_1}(\Tilde{a}^-))$ after time $k_2$, which means that they are not contained in $\Tilde{X}$. Thus $[\Tilde{T}^{k_2}(\Tilde{a}^+), \Tilde{T}^{k_1}(\Tilde{a}^-))$ is an interval of $\Tilde{X}$, so it is in fact equal to $\Tilde{J}$.

For every dynamically trivial interval $J$ of $X$, there is a single dynamically non-trivial interval $J_{\beta}$ of $X$ that contains a discontinuity $\beta$ of $T$ into which $J$ lands, by Lemma \ref{lem:indep-orb} and Lemma \ref{lem:triv-int-struct}. Thus it is contained in the orbit of $J_{\beta}$. Let $J_1$ and $J_2$ be the two continuity intervals of $J_{\beta}$. As $J$ is dynamically trivial, it is equal to an iterate of either $J_1$ or $J_2$. We may assume the former, with the other case being analogous. Assume $J = T^{k_1}(J_1)$. By the discussion for dynamically non-trivial intervals, this iterate moves continuously for sufficiently small perturbations. Thus $\Tilde{T}^{k_1}(\Tilde{J}_1)$ is an interval contained in $\Tilde{X}$. We now show that it is maximal. Indeed, there is again an interval $J' \supset J$ such that $J$ is compactly contained in the interior of $J'$ and the itineraries of $J'$ and $J$ are equal up to the time $J$ lands into $J_{\beta}$. We may assume $J'$ is sufficiently small so that it lands into the interior of $J_{\beta}'$ and that this also holds for all sufficiently small perturbations. As above, this means that the points in $J' \setminus \Tilde{T}^{k_1}(\Tilde{J}_1)$ are not contained in $\Tilde{X}$, which shows that $\Tilde{T}^{k_1}(\Tilde{J}_1)$ is an interval of $\Tilde{X}$.

Thus every interval component $J$ of $X$ has a well-defined continuation $\Tilde{J} \subset \Tilde{X}$, for all sufficiently small perturbations. We now show that $\Tilde{X} = \bigsqcup_{\beta \in X \cap \, \mathcal{C}} \Tilde{O}(\Tilde{J}_\beta)$, which completes the proof. Let $X'$ be the union of all intervals $J'$ discussed above, i.e.\ the larger intervals containing the dynamically trivial and non-trivial intervals $J$ of $X$.

Since $\bigsqcup_{\beta \in X \cap \, \mathcal{C}} \Tilde{O}(\Tilde{J}_\beta)$ is compactly contained in the interior of $X'$, and every point in $X' \setminus \bigsqcup_{\beta \in X \cap \, \mathcal{C}} \Tilde{O}(\Tilde{J}_\beta)$ is eventually mapped into $\bigsqcup_{\beta \in X \cap \, \mathcal{C}} \Tilde{O}(\Tilde{J}_\beta)$, it is sufficient to show that all points in $I \setminus X'$ eventually get mapped into $X'$. Indeed, every such point in $I \setminus X'$ cannot be non-wandering. We first show this for the discontinuities in $I \setminus X'$, and then for every other point $z \in I \setminus X'$.

Assume first that for all sufficiently small perturbations, the itineraries of discontinuities in $I \setminus X'$ do not change up to their landing time into the interior of $X'$. Thus they have the same landing time into the interior of $X'$ for all sufficiently small perturbations. We may assume that the perturbation is sufficiently small, so that the itineraries of intervals of positive length containing the discontinuities in $I \setminus X'$ also do not change. We may assume these intervals are small enough so that they also land into the interior of $X'$ for all sufficiently small perturbations. Thus for all sufficiently small perturbations, none of these intervals is contained in $\Tilde{X}$. By the Orbit Classification Lemma \ref{lem:orb-class}, every point in $z \in I \setminus X'$ that is not eventually periodic either lands on a discontinuity or accumulates on a discontinuity. If $z$ lands on a discontinuity, it is clearly not in $\Tilde{X}$. If $z$ accumulates on a discontinuity in $X'$, we are done by the previous paragraph. If $z$ accumulates on a discontinuity outside of $X'$, it eventually lands into the interval of positive length containing this discontinuity that lands into the interior of $X'$, which means that it also eventually lands into $X'$. If $z \in I \setminus X'$ is eventually periodic, it lands into a periodic interval of $\Tilde{T}$. The boundary points of such an interval are in the orbits of discontinuities contained in $\Tilde{X}$. As we have shown that the only discontinuities in $\Tilde{X}$ are those contained in $X'$, $z$ therefore eventually lands into $\bigsqcup_{\beta \in X \cap \, \mathcal{C}} \Tilde{O}(\Tilde{J}_\beta)$. Thus every point eventually lands into $X'$, so the claim follows.

Now assume that for any sufficiently small perturbation, there is a discontinuity outside of $X'$ for which the itinerary changes before the landing time into the interior of $X'$. We may assume that the perturbation is small enough so that the itinerary of every discontinuity outside of $X'$ up to some finite time at which it does not land on another discontinuity remains the same as before the perturbation. Thus if $\beta \notin X'$ does not land on a discontinuity before the time it lands into the interior of $X'$, we may assume it still does so.

Let $\Tilde{T}$ be a sufficiently small perturbation as in the preceding paragraph. Let $\Tilde{\beta_{i_1}^+}$ (we may assume it is of $+$-type, with the other case being analogous) be a discontinuity that:
\begin{itemize}
    \item Changes itinerary before landing into the interior of $X'$;
    \item Does not eventually land into $X'$.
\end{itemize}
Such a discontinuity exists because if every discontinuity that changes itinerary still lands into $X'$, then there are again intervals of positive length containing such discontinuities that also land into $X'$, so a similar argument as before shows that every $z \in I \setminus X'$ eventually lands into $X'$.

We will show that there is a finite path $\beta_{i_1}^+ \leftarrow \beta_{i_2}^- \leftarrow \dots \leftarrow \betas$ in the ghost tree of $\beta_1^+$ such that $\betas$ already appears earlier in the path. This means that $\betas$ is contained in its ghost tree $\mathcal{GT}(\betas)$, which contradicts A3.

Let $n_1$ be the first time at which the itinerary of $\beta_{i_1}^+$ changes. Since the perturbation is small enough, $\beta_{i_1}^+$ must have landed on a discontinuity $\beta_{i_2}^+$ at time $n_1$, and now it lands to the left of it. We now claim that there is a discontinuity $\beta_{i_3}^-$ such that $\beta_{i_2}^-$ lands on $\beta_{i_3}^-$ at some time $n_2$ and such that $\Tilde{T}^{n_1 + n_2}(\Tilde{\beta}_{i_1}^+)$ lands to the right of $\Tilde{\beta}_{i_3}^+$.

Indeed, if $\beta_{i_2}^-$ does not change itinerary before landing into $X'$, the same holds for an interval of positive length to the left of $\Tilde{\beta}_{i_2}^-$ and thus for $\Tilde{T}^{n_1}(\Tilde{\beta}_{i_1}^+)$, which is a contradiction with our choice of $\beta_{i_1}^+$. Let $m_{2,1}$ be the first time at which $\beta_{i_2}^-$ changes itinerary. In the same way as above, $T^{m_{2,1}}(\beta_{i_2}^-) = \beta_{{i_2},1}^-$ for some discontinuity $\beta_{2,1}^-$ not contained in $X$ and that $\beta_{i_2}^-$ now lands to the right of $\beta_{i_2,1}^+$ after perturbation. If $\Tilde{T}^{n_1 + m_{2,1}}(\Tilde{\beta}_{i_1}^+)$ is to the left of $\Tilde{\beta}_{i_2,1}^-$, then we may repeat this argument for $\beta_{i_2,1}^-$, and get that it must land at some $\beta_{i_2,2}^-$ at time $m_{2,2}$, and it has to land to the right of $\Tilde{\beta}_{i_2,2}^+$ after the perturbation. Continuing by induction, we get that there is a minimal finite $t_2$ such that $\Tilde{T}^{n_1 + m_{2,1} + \dots + m_{2,t_2}}(\beta_{i_1}^+)$ lands to the right of $\beta_{i_2,t_2}^+$ and that we have $\beta_{i_2,1}^- \to \beta_{i_2,2}^- \to \dots \to \beta_{i_2,t_2}^-$. There has to exist such a finite $t_2$ because there are finitely many discontinuities, so each iterate $\Tilde{T}^{m_{2,i}}(\Tilde{\beta}_{i_2,i}^i)$ moves to the right by at least $\epsilon > 0$. Thus at each step $i$ of the induction $\beta_{i_2,i}^- - \Tilde{T}^{n_1 + m_{2,1} + \dots + m_{2,i}}(\beta_{i_1}^+)$ has to be smaller than $\beta_{i_2,i-1}^- - \Tilde{T}^{n_1 + m_{2,1} + \dots + m_{2,i-1}}(\beta_{i_1}^+)$ by at least $\epsilon$. We can therefore set $n_2 := m_{2,1} + \dots + m_{2,t_2}$ and $\beta_{i_3}^+ := \beta_{i_2,t_2}^+$.

This argument can now be repeated for $\beta_{i_3}^+$, so by induction we get an infinite path $\beta_{i_1}^+ \leftarrow \beta_{i_2}^- \leftarrow \beta_{i_3}^+ \leftarrow \dots$ in the ghost tree of $\beta_{i_1}^+$. Since there are only finitely many discontinuities, one of them has to be repeated, so we get the required finite sequence.
\end{proof}

\addcontentsline{toc}{section}{References}
\printbibliography

@book {MR1312365,
    AUTHOR = {McMullen, Curtis T.},
     TITLE = {Complex dynamics and renormalization},
    SERIES = {Annals of Mathematics Studies},
    VOLUME = {135},
 PUBLISHER = {Princeton University Press, Princeton, NJ},
      YEAR = {1994},
     PAGES = {x+214},
      ISBN = {0-691-02982-2; 0-691-02981-4},
   MRCLASS = {58F23 (30D05)},
  MRNUMBER = {1312365},
MRREVIEWER = {Gregery\ T.\ Buzzard},
}

@misc{drach2026topologicalprevalencefinitetype,
      title={Topological Prevalence of Finite Type Interval Translation Maps}, 
      author={Kostiantyn Drach and Leon Staresinic and Sebastian van Strien},
      year={2026},
      eprint={2605.00186},
      archivePrefix={arXiv},
      primaryClass={math.DS},
      url={https://arxiv.org/abs/2605.00186}, 
}

@misc{drach2026transversalityintervaltranslationmaps,
      title={Transversality for Interval Translation Maps}, 
      author={Kostiantyn Drach and Leon Staresinic and Sebastian van Strien},
      year={2026},
      eprint={2605.00173},
      archivePrefix={arXiv},
      primaryClass={math.DS},
      url={https://arxiv.org/abs/2605.00173}, 
}

@article {MR4973368,
    AUTHOR = {Artigiani, Mauro and Hubert, Pascal and Skripchenko,
              Alexandra},
     TITLE = {Renormalization for {B}ruin-{T}roubetzkoy {ITM}s},
   JOURNAL = {Discrete Contin. Dyn. Syst.},
  FJOURNAL = {Discrete and Continuous Dynamical Systems},
    VOLUME = {47},
      YEAR = {2026},
     PAGES = {519--547},
      ISSN = {1078-0947,1553-5231},
   MRCLASS = {37E05 (11K55 37A05 37A44 37C45)},
  MRNUMBER = {4973368},
       DOI = {10.3934/dcds.2025127},
       URL = {https://doi.org/10.3934/dcds.2025127},
}

@article {MR3893724,
    AUTHOR = {Bruin, Henk and Carminati, Carlo and Marmi, Stefano and
              Profeti, Alessandro},
     TITLE = {Matching in a family of piecewise affine maps},
   JOURNAL = {Nonlinearity},
  FJOURNAL = {Nonlinearity},
    VOLUME = {32},
      YEAR = {2019},
    NUMBER = {1},
     PAGES = {172--208},
      ISSN = {0951-7715,1361-6544},
   MRCLASS = {37E05 (11A55 11J70 11K50 37A45 37E45)},
  MRNUMBER = {3893724},
MRREVIEWER = {Steven\ M.\ Pederson},
       DOI = {10.1088/1361-6544/aae935},
       URL = {https://doi.org/10.1088/1361-6544/aae935},
}

@article {MR101951,
    AUTHOR = {Peixoto, M. M.},
     TITLE = {On structural stability},
   JOURNAL = {Ann. of Math. (2)},
  FJOURNAL = {Annals of Mathematics. Second Series},
    VOLUME = {69},
      YEAR = {1959},
     PAGES = {199--222},
      ISSN = {0003-486X},
   MRCLASS = {34.00},
  MRNUMBER = {101951},
MRREVIEWER = {L.\ Markus},
       DOI = {10.2307/1970100},
       URL = {https://doi.org/10.2307/1970100},
}

@article {MR339291,
    AUTHOR = {Newhouse, Sheldon E.},
     TITLE = {Diffeomorphisms with infinitely many sinks},
   JOURNAL = {Topology},
  FJOURNAL = {Topology. An International Journal of Mathematics},
    VOLUME = {13},
      YEAR = {1974},
     PAGES = {9--18},
      ISSN = {0040-9383},
   MRCLASS = {58F99},
  MRNUMBER = {339291},
MRREVIEWER = {Tudor\ S.\ Ra\c tiu},
       DOI = {10.1016/0040-9383(74)90034-2},
       URL = {https://doi.org/10.1016/0040-9383(74)90034-2},
}

@article {MR287580,
    AUTHOR = {Robbin, J. W.},
     TITLE = {A structural stability theorem},
   JOURNAL = {Ann. of Math. (2)},
  FJOURNAL = {Annals of Mathematics. Second Series},
    VOLUME = {94},
      YEAR = {1971},
     PAGES = {447--493},
      ISSN = {0003-486X},
   MRCLASS = {57.48},
  MRNUMBER = {287580},
MRREVIEWER = {Kenneth\ R.\ Meyer},
       DOI = {10.2307/1970766},
       URL = {https://doi.org/10.2307/1970766},
}

@article {MR474411,
    AUTHOR = {Robinson, Clark},
     TITLE = {Structural stability of {$C\sp{1}$} diffeomorphisms},
   JOURNAL = {J. Differential Equations},
  FJOURNAL = {Journal of Differential Equations},
    VOLUME = {22},
      YEAR = {1976},
    NUMBER = {1},
     PAGES = {28--73},
      ISSN = {0022-0396,1090-2732},
   MRCLASS = {58F10},
  MRNUMBER = {474411},
MRREVIEWER = {Zbigniew\ Nitecki},
       DOI = {10.1016/0022-0396(76)90004-8},
       URL = {https://doi.org/10.1016/0022-0396(76)90004-8},
}

@article {MR932138,
    AUTHOR = {Ma\~n\'e, Ricardo},
     TITLE = {A proof of the {$C^1$} stability conjecture},
   JOURNAL = {Inst. Hautes \'Etudes Sci. Publ. Math.},
  FJOURNAL = {Institut des Hautes \'Etudes Scientifiques. Publications
              Math\'ematiques},
    NUMBER = {66},
      YEAR = {1988},
     PAGES = {161--210},
      ISSN = {0073-8301,1618-1913},
   MRCLASS = {58F10 (58F15)},
  MRNUMBER = {932138},
MRREVIEWER = {Roberto\ Moriyon},
       URL =
              {http://www.numdam.org/numdam-bin/item?id=PMIHES_1987__66__161_0},
}

@article {MR2299743,
    AUTHOR = {Avila, Artur and Forni, Giovanni},
     TITLE = {Weak mixing for interval exchange transformations and
              translation flows},
   JOURNAL = {Ann. of Math. (2)},
  FJOURNAL = {Annals of Mathematics. Second Series},
    VOLUME = {165},
      YEAR = {2007},
    NUMBER = {2},
     PAGES = {637--664},
      ISSN = {0003-486X,1939-8980},
   MRCLASS = {37A25 (28D05 37D40 37E35)},
  MRNUMBER = {2299743},
MRREVIEWER = {Gabriela\ Schmith\"usen},
       DOI = {10.4007/annals.2007.165.637},
       URL = {https://doi.org/10.4007/annals.2007.165.637},
}

@misc{artigiani2026typicalweakmixingexceptional,
      title={Typical Weak Mixing and Exceptional Spectral Properties for Interval Translation Mappings}, 
      author={Mauro Artigiani and Artur Avila and Sébastien Ferenczi and Pascal Hubert and Alexandra Skripchenko},
      year={2026},
      eprint={2603.19401},
      archivePrefix={arXiv},
      primaryClass={math.DS},
      url={https://arxiv.org/abs/2603.19401}, 
}

@incollection {MR3821044,
    AUTHOR = {Berger, Pierre},
     TITLE = {Lectures on structural stability in dynamics},
 BOOKTITLE = {Dynamical systems},
    SERIES = {Banach Center Publ.},
    VOLUME = {115},
     PAGES = {9--35},
 PUBLISHER = {Polish Acad. Sci. Inst. Math., Warsaw},
      YEAR = {2018},
      ISBN = {978-83-86806-39-3},
   MRCLASS = {37C20 (37C70 37D20 37D50 37F15)},
  MRNUMBER = {3821044},
}

@article {MR142859,
    AUTHOR = {Peixoto, M. M.},
     TITLE = {Structural stability on two-dimensional manifolds},
   JOURNAL = {Topology},
  FJOURNAL = {Topology. An International Journal of Mathematics},
    VOLUME = {1},
      YEAR = {1962},
     PAGES = {101--120},
      ISSN = {0040-9383},
   MRCLASS = {57.48 (57.50)},
  MRNUMBER = {142859},
MRREVIEWER = {Jack\ K.\ Hale},
       DOI = {10.1016/0040-9383(65)90018-2},
       URL = {https://doi.org/10.1016/0040-9383(65)90018-2},
}

@article {MR3597033,
    AUTHOR = {Bruin, Henk and Carminati, Carlo and Kalle, Charlene},
     TITLE = {Matching for generalised {$\beta$}-transformations},
   JOURNAL = {Indag. Math. (N.S.)},
  FJOURNAL = {Koninklijke Nederlandse Akademie van Wetenschappen.
              Indagationes Mathematicae. New Series},
    VOLUME = {28},
      YEAR = {2017},
    NUMBER = {1},
     PAGES = {55--73},
      ISSN = {0019-3577,1872-6100},
   MRCLASS = {11K50 (37E05)},
  MRNUMBER = {3597033},
MRREVIEWER = {Alan\ Haynes},
       DOI = {10.1016/j.indag.2016.11.005},
       URL = {https://doi.org/10.1016/j.indag.2016.11.005},
}

@article {MR2422375,
    AUTHOR = {Nakada, Hitoshi and Natsui, Rie},
     TITLE = {The non-monotonicity of the entropy of {$\alpha$}-continued
              fraction transformations},
   JOURNAL = {Nonlinearity},
  FJOURNAL = {Nonlinearity},
    VOLUME = {21},
      YEAR = {2008},
    NUMBER = {6},
     PAGES = {1207--1225},
      ISSN = {0951-7715,1361-6544},
   MRCLASS = {37A45 (11K50 37A35)},
  MRNUMBER = {2422375},
MRREVIEWER = {Anne\ Broise-Alamichel},
       DOI = {10.1088/0951-7715/21/6/003},
       URL = {https://doi.org/10.1088/0951-7715/21/6/003},
}

@article {MR2342693,
    AUTHOR = {Kozlovski, Oleg and Shen, Weixiao and van Strien, Sebastian},
     TITLE = {Density of hyperbolicity in dimension one},
   JOURNAL = {Ann. of Math. (2)},
  FJOURNAL = {Annals of Mathematics. Second Series},
    VOLUME = {166},
      YEAR = {2007},
    NUMBER = {1},
     PAGES = {145--182},
      ISSN = {0003-486X,1939-8980},
   MRCLASS = {37E05 (37C20 37D20 37E10 37F30)},
  MRNUMBER = {2342693},
MRREVIEWER = {Mike\ Todd},
       DOI = {10.4007/annals.2007.166.145},
       URL = {https://doi.org/10.4007/annals.2007.166.145},
}

@article {MR2335796,
    AUTHOR = {Kozlovski, Oleg and Shen, Weixiao and van Strien, Sebastian},
     TITLE = {Rigidity for real polynomials},
   JOURNAL = {Ann. of Math. (2)},
  FJOURNAL = {Annals of Mathematics. Second Series},
    VOLUME = {165},
      YEAR = {2007},
    NUMBER = {3},
     PAGES = {749--841},
      ISSN = {0003-486X,1939-8980},
   MRCLASS = {37E05 (30C10 30C62 37C20 37E20 37F10 37F30 37F50)},
  MRNUMBER = {2335796},
MRREVIEWER = {Henk\ Bruin},
       DOI = {10.4007/annals.2007.165.749},
       URL = {https://doi.org/10.4007/annals.2007.165.749},
}

@article {MR1620850,
    AUTHOR = {McMullen, Curtis T. and Sullivan, Dennis P.},
     TITLE = {Quasiconformal homeomorphisms and dynamics. {III}. {T}he
              {T}eichm\"uller space of a holomorphic dynamical system},
   JOURNAL = {Adv. Math.},
  FJOURNAL = {Advances in Mathematics},
    VOLUME = {135},
      YEAR = {1998},
    NUMBER = {2},
     PAGES = {351--395},
      ISSN = {0001-8708,1090-2082},
   MRCLASS = {58F23 (30C62 30D05 30F60)},
  MRNUMBER = {1620850},
MRREVIEWER = {Athanase\ Papadopoulos},
       DOI = {10.1006/aima.1998.1726},
       URL = {https://doi.org/10.1006/aima.1998.1726},
}

@book {MR1239171,
    AUTHOR = {de Melo, Welington and van Strien, Sebastian},
     TITLE = {One-dimensional dynamics},
    SERIES = {Ergebnisse der Mathematik und ihrer Grenzgebiete (3) [Results
              in Mathematics and Related Areas (3)]},
    VOLUME = {25},
 PUBLISHER = {Springer-Verlag, Berlin},
      YEAR = {1993},
     PAGES = {xiv+605},
      ISBN = {3-540-56412-8},
   MRCLASS = {58F03 (58-02 58Fxx)},
  MRNUMBER = {1239171},
MRREVIEWER = {Feliks\ Przytycki},
       DOI = {10.1007/978-3-642-78043-1},
       URL = {https://doi.org/10.1007/978-3-642-78043-1},
}

@article{bruin2023interval,
  title={Interval Translation Maps with Weakly Mixing Attractors},
  author={Bruin, Henk and Radinger, Silvia},
  journal={arXiv preprint arXiv:2312.10533},
  year={2023}
}

@article{MR4397159,
	author = {Artigiani, Mauro and Fougeron, Charles and Hubert, Pascal and Skripchenko, Alexandra},
	doi = {10.1090/mosc/311},
	fjournal = {Transactions of the Moscow Mathematical Society},
	issn = {0077-1554,1547-738X},
	journal = {Trans. Moscow Math. Soc.},
	mrclass = {37E05},
	mrnumber = {4397159},
	mrreviewer = {Jo\~ao\ Lopes Dias},
	pages = {157--172},
	title = {A note on double rotations of infinite type},
	url = {https://doi.org/10.1090/mosc/311},
	volume = {82},
	year = {2021}}

@article{MR2966738,
	author = {Bruin, Henk and Clack, Gregory},
	doi = {10.3934/dcds.2012.32.4133},
	fjournal = {Discrete and Continuous Dynamical Systems. Series A},
	issn = {1078-0947,1553-5231},
	journal = {Discrete Contin. Dyn. Syst.},
	mrclass = {37B10 (37C70 37D25 37E05 37E10)},
	mrnumber = {2966738},
	mrreviewer = {David\ Ralston},
	number = {12},
	pages = {4133--4147},
	title = {Inducing and unique ergodicity of double rotations},
	url = {https://doi.org/10.3934/dcds.2012.32.4133},
	volume = {32},
	year = {2012}}

@article{MR2308208,
	author = {Bruin, Henk},
	doi = {10.1080/14689360601028084},
	fjournal = {Dynamical Systems. An International Journal},
	issn = {1468-9367,1468-9375},
	journal = {Dyn. Syst.},
	mrclass = {37E20 (37E05 37E10)},
	mrnumber = {2308208},
	mrreviewer = {J\'er\^ome\ Buzzi},
	number = {1},
	pages = {11--24},
	title = {Renormalization in a class of interval translation maps of {$d$} branches},
	url = {https://doi.org/10.1080/14689360601028084},
	volume = {22},
	year = {2007}}

@article{MR2152403,
	author = {Suzuki, Hideyuki and Ito, Shunji and Aihara, Kazuyuki},
	doi = {10.3934/dcds.2005.13.515},
	fjournal = {Discrete and Continuous Dynamical Systems. Series A},
	issn = {1078-0947,1553-5231},
	journal = {Discrete Contin. Dyn. Syst.},
	mrclass = {37E05 (37B05 37E10 37E45)},
	mrnumber = {2152403},
	mrreviewer = {Henk\ Bruin},
	number = {2},
	pages = {515--532},
	title = {Double rotations},
	url = {https://doi.org/10.3934/dcds.2005.13.515},
	volume = {13},
	year = {2005}}

@article{MR2013352,
	author = {Bruin, Henk and Troubetzkoy, Serge},
	doi = {10.1007/BF02785958},
	fjournal = {Israel Journal of Mathematics},
	issn = {0021-2172,1565-8511},
	journal = {Israel J. Math.},
	mrclass = {37A25 (37B99 37E10)},
	mrnumber = {2013352},
	mrreviewer = {Franco\ Vivaldi},
	pages = {125--148},
	title = {The {G}auss map on a class of interval translation mappings},
	url = {https://doi.org/10.1007/BF02785958},
	volume = {137},
	year = {2003}}

@article{MR1356616,
	author = {Boshernitzan, Michael and Kornfeld, Isaac},
	doi = {10.1017/S0143385700009652},
	fjournal = {Ergodic Theory and Dynamical Systems},
	issn = {0143-3857,1469-4417},
	journal = {Ergodic Theory Dynam. Systems},
	mrclass = {58F03 (58F11)},
	mrnumber = {1356616},
	mrreviewer = {Jos\'e\ Miguel\ Moreno},
	number = {5},
	pages = {821--832},
	title = {Interval translation mappings},
	url = {https://doi.org/10.1017/S0143385700009652},
	volume = {15},
	year = {1995}}

@article{MR3124735,
	author = {Volk, Denis},
	doi = {10.3934/dcds.2014.34.2307},
	fjournal = {Discrete and Continuous Dynamical Systems. Series A},
	issn = {1078-0947,1553-5231},
	journal = {Discrete Contin. Dyn. Syst.},
	mrclass = {37C70 (37C05 37C20 37D20 37D45)},
	mrnumber = {3124735},
	number = {5},
	pages = {2307--2314},
	title = {Almost every interval translation map of three intervals is finite type},
	url = {https://doi.org/10.3934/dcds.2014.34.2307},
	volume = {34},
	year = {2014}}

@article {MR644019,
    AUTHOR = {Veech, William A.},
     TITLE = {Gauss measures for transformations on the space of interval
              exchange maps},
   JOURNAL = {Ann. of Math. (2)},
  FJOURNAL = {Annals of Mathematics. Second Series},
    VOLUME = {115},
      YEAR = {1982},
    NUMBER = {1},
     PAGES = {201--242},
      ISSN = {0003-486X},
   MRCLASS = {28D05 (58F11)},
  MRNUMBER = {644019},
MRREVIEWER = {Michael\ Keane},
       DOI = {10.2307/1971391},
       URL = {https://doi.org/10.2307/1971391},
}

@article {MR644018,
    AUTHOR = {Masur, Howard},
     TITLE = {Interval exchange transformations and measured foliations},
   JOURNAL = {Ann. of Math. (2)},
  FJOURNAL = {Annals of Mathematics. Second Series},
    VOLUME = {115},
      YEAR = {1982},
    NUMBER = {1},
     PAGES = {169--200},
      ISSN = {0003-486X},
   MRCLASS = {28D05 (10K10 58F11 58F18)},
  MRNUMBER = {644018},
MRREVIEWER = {D.\ Newton},
       DOI = {10.2307/1971341},
       URL = {https://doi.org/10.2307/1971341},
}

@misc{drach2025densitystableintervaltranslation,
      title={Density of Stable Interval Translation Maps}, 
      author={Kostiantyn Drach and Leon Staresinic and Sebastian van Strien},
      year={2025, \url{https://arxiv.org/abs/2411.14312}},
      eprint={2411.14312},
      archivePrefix={arXiv},
      primaryClass={math.DS},
      url={https://arxiv.org/abs/2411.14312}, 
}

\end{document}